\newtheorem{theorem}{Theorem}[section]
\newtheorem{proposition}[theorem]{Proposition}
\newtheorem{lemma}[theorem]{Lemma} 
\newtheorem{corollary}[theorem]{Corollary}
\newtheorem{remark}[theorem]{Remark}
\newtheorem{definition}[theorem]{Definition}
\newcommand{\upw}{{\uparrow}}
\newcommand{\downw}{{\downarrow}}
\newcommand{\Lk}{{\mathrm{Lk}}}
\newcommand{\Star}{{\mathrm{St}}}
\newcommand{\link}{\Lk}
\newcommand{\ttt}{\mathbb{T}}
\newcommand{\rr}{{\mathbb R}}
\newcommand{\nn}{{\mathbb N}}
\newcommand{\zz}{{\mathbb Z}}
\newcommand{\cc}{{\mathbb C}}
\newcommand{\qq}{{\mathbb Q}}
\newcommand{\sss}{{\mathbb S}}
\newcommand{\semi}{{\rtimes}}
\newcommand{\id}{{\mathrm {Id}}}
\newcommand{\cR}{{\mathcal R}}
\newcommand{\cL}{{\mathcal L}}
\newcommand{\cN}{{\mathcal N}}
\newcommand{\cC}{{\mathcal C}}
\newcommand{\ug}{{\mathbf{g}}}
\newcommand{\ua}{{\mathbf{a}}}
\newcommand{\dirlim}{\lim}
\newcommand{\scw}{{\scriptstyle{W}}}
\newcommand{\scn}{{\scriptstyle{N}}}
\newcommand{\sce}{{\scriptstyle{E}}}
\newcommand{\scs}{{\scriptstyle{S}}}
\newcommand{\Hom}{\mathrm {Hom}}
\newcommand{\bigast}{\mathop{\scalebox{1.7}{\raisebox{-0.2ex}{$\ast$}}}}
\newcommand\mapright[1]{\smash{\mathop{\longrightarrow}\limits^{#1}}}
\newcommand\mapdown[1]{\Big\downarrow\rlap{$\vcenter{\hbox{$\scriptstyle#1$}}$}}
\title{Uncountably many groups of type $FP$} 
\author{Ian J. Leary\thanks{Partially supported by a Research Fellowship 
from the Leverhulme Trust.  Some of this work was done at MSRI, Berkeley, 
where research is supported by 
the National Science Foundation under Grant No.~DMS-1440140.}}
\date{\today}
\begin{document} 

\maketitle

\begin{abstract} 
We construct uncountably many discrete groups of type $FP$; 
in particular we construct groups of type $FP$ that do not 
embed in any finitely presented group.  We compute the 
ordinary, $\ell^2$, and compactly-supported cohomology of 
these groups.  For each $n\geq 4$ we construct a closed 
aspherical $n$-manifold that admits an uncountable family of 
acyclic regular coverings with non-isomorphic covering groups.  
\end{abstract}

\section{Introduction} 

An Eilenberg-Mac~Lane space or $K(G,1)$ for a group $G$ is a connected
CW-complex whose fundamental group is $G$ and whose universal cover is
contractible.   Equivalently a $K(G,1)$ is an aspherical CW-complex whose
fundamental group is $G$.  The group $G$ is of type $F$ if there is a
finite $K(G,1)$, and is of type $F_n$ if there is a $K(G,1)$ with
finite $n$-skeleton.  These conditions are generalizations of two 
standard group-theoretic conditions: $G$ can be finitely generated 
if and only if $G$ is type $F_1$, and $G$ can be finitely presented 
if and only if $G$ is type~$F_2$.  

There are a number of other finiteness conditions on groups that are
implied by type~$F$ or type~$F_n$.  The universal covering space of 
a $K(G,1)$ is a contractible free $G$-CW-complex, with one $G$-orbit
of cells for each cell in the given $K(G,1)$.  For a non-trivial ring 
$R$, a group $G$ is type $FH(R)$ if there is an $R$-acyclic free 
$G$-CW-complex with finitely many orbits of cells.  Similarly, 
$G$ is type~$FL(R)$ (resp.~$FP(R)$) if there is a finite resolution 
of $R$ by finitely generated free (resp.~finitely generated projective) 
$RG$-modules.  There are corresponding analogues of the properties 
$F_n$.  If $G$ is type $F$ then $G$ satisfies each of these other 
finiteness conditions for any $R$.  It has long been known that 
$G$ is type~$F$ if and only if $G$ is both $F_2$ and $FL(\zz)$.  

The question of whether $FP(\zz)$ implies $F_2$ was 
solved by Bestvina and Brady~\cite{BB}.  
To each finite flag complex $L$, they associated a group $BB_L$ in 
such a way that the finiteness properties of $BB_L$ were 
controlled by properties of $L$.  This gave a systematic way 
to construct groups enjoying different finiteness properties. 
In particular, in the case when $L$ is acyclic but is not 
contractible, they showed that $BB_L$ is $FP(\zz)$, and even 
$FH(\zz)$,  but is not $F_2$.  

Since there are only countably many finite presentations of groups,
there are only countably many finitely presented groups up to
isomorphism.  On the other hand it is well-known that there are
uncountably many finitely generated groups.  Since a countable group
contains at most countably many finitely generated subgroups, it
follows that not every finitely generated group is a subgroup of a
finitely presented group~\cite{neumann}.  Higman showed that a group
$G$ embeds in a finitely presented group if and only if $G$ admits a
recursive presentation, i.e., a presentation in which the sets of
generators and relators are recursively enumerable~\cite{higman}.

Once one knows that there are $FP_2(\zz)$ groups that are not finitely
presented, it becomes natural to ask how many there are.  There are
only countably many finite flag complexes $L$, and hence only
countably many Bestvina-Brady groups $BB_L$.  Moreover each $BB_L$ is
defined as a normal subgroup of a right-angled Artin group $A_L$ and
this group is of type~$F$ for every finite $L$.  Thus the examples
constructed in~\cite{BB} leave open the question of whether every
group of type $FP_2(\zz)$ embeds in a finitely presented group, and
whether there are uncountably many groups of type $FP_2(\zz)$.

We answer these questions; we construct an uncountable family of 
groups of type $FP(\zz)$, and determine which of these groups
are isomorphic to subgroups of finitely presented groups.  Our 
construction uses the techniques introduced in~\cite{BB}.  For 
any connected finite flag complex $L$ and any subset $S$ of $\zz$
we construct a group $G_L(S)$.  Moreover, the construction 
is functorial in $S$, in the sense that for $S\subseteq T\subseteq \zz$
there is a natural surjective group homomorphism $G_L(S)\rightarrow 
G_L(T)$.  The finiteness properties of $G_L(S)$ are controlled by 
properties of $L$ and its universal cover $\widetilde L$.  
Our main example, an uncountable family 
$G_L(S)$ of groups of type $FP(\zz)$, arises from the case when 
$L$ and $\widetilde L$ are both acyclic but $L$ is not contractible. 
The groups $G_L(S)$ can be viewed as interpolating between the 
groups $G_L(\emptyset)$ and $G_L(\zz)$, which were known previously:
$G_L(\zz)$ is $BB_L$ and $G_L(\emptyset)$ is isomorphic to the 
natural semidirect product $BB_{\widetilde L}\semi \pi_1(L)$.  

Just as in~\cite{BB}, the proofs of our main results are geometric,
involving Morse functions on CAT(0) cubical complexes.  For each $S$
and $L$, the group $G_L(S)$ acts as a group of automorphisms of a CAT(0)
cubical complex $X_L^{(S)}$.  This cubical complex is a branched cover
of the complex $X_L$ that plays a crucial role in~\cite{BB}, and the 
Morse function on $X_L$ induces a Morse function on $X_L^{(S)}$.  
Just as in~\cite{BB}, the group $G_L(S)$ acts freely cocompactly 
on a level set for this Morse function, and Morse theory is used 
to establish the connectivity properties of this level set.  In 
contrast to the groups $BB_L$ in~\cite{BB}, the groups $G_L(S)$ 
for general $S$ 
cannot be described as subgroups of other previously known groups.  
We give a number of ways of describing $G_L(S)$, each of which will 
play a role in our proofs.  Some of these ways simplify if either 
$0\in S$ or if $L$ has nlcp, which stands for `no local cut points', 
a property defined below in Definition~\ref{defnnlcp}.  
In particular $G_L(S)$ is isomorphic to: 

\begin{itemize} 

\item{} the group of deck transformations of the regular branched covering  
$X_L^{(S)}\rightarrow X_L/BB_L$; 

\item{} the group of deck transformations of the regular covering 
$X_t^{(S)}\rightarrow X_t/BB_L$, where $X_t^{(S)}$ denotes a non-integer
level set in $X_L^{(S)}$ and similarly $X_t$ in $X_L$;  

\item{} the fundamental group of a space
  obtained from $X_L/BB_L$ by removing some vertices, in the case when
  $L$ has nlcp;

\item{} the group given by a presentation $P_L(\Gamma,S)$, in the case
  when $0\in S$.  

\end{itemize} 

In Theorem~\ref{thmexess} we will \emph{define} $G_L(S)$ as the group of 
deck transformations of the regular branched covering
$X_L^{(S)}\rightarrow X_L/BB_L$.  

However, defining the CAT(0) cubical complex $X_L^{(S)}$ will occupy a
significant proportion of our paper.  For this reason, we give the
presentations $P_L(\Gamma,S)$ before stating our main theorems.  These
presentations realize the groups $G_L(S)$ and the homomorphisms
$G_L(S)\twoheadrightarrow G_L(T)$ for $0\in S\subseteq T$, in the
sense that the generating set depends only on $L$ while the relators
in $P_L(\Gamma,S)$ are a subset of the relators in $P_L(\Gamma,T)$.
Since there are isomorphisms $G_L(S)\cong G_L(T)$ whenever $T=S+n$ is
a translate of $S$, the presentations $P_L(\Gamma,S)$ describe the
isomorphism type of each $G_L(S)$ except for $G_L(\emptyset)$, which 
can be described as a semidirect product $BB_{\widetilde L}\semi\pi_1(L)$.  

Recall that a directed edge in a simplicial complex
is an ordered pair $(v,v')$ of vertices such that the corresponding
unordered pair is an edge.  A directed loop $\gamma=(a_1,\ldots,a_l)$
of length $l$ is an ordered $l$-tuple of directed edges whose
endpoints match up, in the sense that there are vertices $v_0,\ldots,v_l$ 
with $v_0=v_l$ and $a_i=(v_{i-1},v_{i})$ for each $i$.  
In the following definition, we assume that $L$ is a 
finite connected flag complex, and that $\Gamma$ is a finite 
collection of directed edge loops in $L$ that normally generates 
$\pi_1(L)$; equivalently if one attaches discs to $L$ along the loops
in $\Gamma$ one obtains a simply-connected complex.  

\begin{definition}
For $L$ and $\Gamma$ as above and for any set $S$ with 
$0\in S\subseteq \zz$, the presentation $P_L(\Gamma,S)$ 
has as generators the directed edges of $L$, subject to the following 
relations.  

\begin{itemize} 

\item 
(Edge relations.) 
For each directed edge $a=(x,y)$ with opposite edge $\overline a=(y,x)$, 
the relation $a\overline a=1$; 

\item 
(Triangle relations.) 
For each directed triangle $(a,b,c)$ in $L$, the relations 
$abc=1$ and $a^{-1}b^{-1}c^{-1}=1$; 

\item 
(Long cycle relations.) 
For each $n\in S- \{0\}$, and each $(a_1,a_2,\ldots,a_l)\in \Gamma$, 
the relation $a_1^na_2^n\cdots a_l^n=1$. 
\end{itemize} 
\end{definition} 

In the case when $L$ is simply connected, we may take $\Gamma$ to be empty, 
in which case $P_L(\Gamma,S)$ is independent of $S$.  This reflects the 
fact that when $L$ is simply-connected, the natural map $G_L(S)\rightarrow 
G_L(\zz)\cong BB_L$ is an isomorphism for each $S$.  Our first main
result addresses all other cases.  

\begin{theorem}\label{thma}
Let $L$ be a fixed finite connected flag complex that is not 
simply-connected.  

\begin{enumerate}

\item{}
There are uncountably many (in fact $2^{\aleph_0}$) isomorphism 
types of group $G_L(S)$.   

\item{}
For $0\in S$, and any $\Gamma$ that normally generates $\pi_1(L)$, 
the presentation $P_L(\Gamma,S)$ is a presentation of the group
$G_L(S)$.  

\item{}
$G_L(S)$ is finitely presentable if and only if $S$ is finite.  

\item{}
$G_L(S)$ is isomorphic to a subgroup of a finitely presented group 
if and only if $S$ is recursively enumerable.  
\end{enumerate} 

\end{theorem}  

Our second main result summarizes the finiteness properties of the
groups $G_L(S)$.  Some of our results concerning the cases when 
either $S$ or $\zz-S$ is finite are omitted to simplify the
statement.  All finiteness properties mentioned in the 
statement will be defined in a subsequent section.  

\begin{theorem} \label{thmb}
Let $L$ be a connected finite flag complex, let $R$ be any ring 
in which $1\neq 0$, and let $S_0$ be any subset of 
$\zz$ such that both $S_0$ and $\zz- S_0$ are 
infinite.  The following are equivalent: 
\begin{itemize} 
\item 
$L$ and $\widetilde L$ are $R$-acyclic; 

\item
For each $S\subseteq \zz$, $G_L(S)$ is type $FH(R)$; 

\item 
For each $S\subseteq \zz$, $G_L(S)$ is type $FP(R)$; 

\item
$\pi_1(L)$ and $G_L(S_0)$ are type $FP(R)$.  

\end{itemize} 

For each $n\geq 2$, the following are equivalent: 
\begin{itemize} 
\item 
$L$ and $\widetilde{L}$ are $(n-1)$-$R$-acyclic; 

\item
For each $S\subseteq \zz$, $G_L(S)$ is type $FH_n(R)$;   

\item 
For each $S\subseteq \zz$, $G_L(S)$ is type $FP_n(R)$; 

\item
$\pi_1(L)$ and $G_L(S_0)$ are type $FP_n(R)$.  

\end{itemize} 
\end{theorem}

Bestvina-Brady groups have been studied extensively, and many of the
results that have been obtained for $BB_L$ have analogues for
$G_L(S)$.  In particular, our proof of Theorem~\ref{thmb} uses some techniques
from~\cite{BB,buxgon}.  For $G_L(S)$ we prove some of the results that
have been proved for $BB_L$ in~\cite{davispd,do,DL,LS}.  With some
hypotheses on $L$, we compute the ordinary cohomology of $G_L(S)$ in
terms of that of $BB_L$, which was calculated in~\cite{LS}, and we
compute the compactly supported and $\ell^2$-cohomology of $G_L(S)$
following~\cite{do}.  We use Davis's trick~\cite{davispd,davisbook} to
construct, for each $n\geq 4$, a closed aspherical $n$-manifold that
admits an uncountable family of acyclic regular covers with
non-isomorphic groups of deck transformations.  We also give a second
proof that $G_L(S)$ is $FP_2(\zz)$ whenever $\pi_1(L)$ is perfect that
does not use CAT(0) cubical complexes, along the lines
of~\cite{DL}.

The paper is structured as follows.  Sections 2--5 describe background
material concerning flag complexes, finiteness conditions, Artin
groups and Morse functions respectively.  Sections 6--8 concern links
in CAT(0) cubical complexes, especially the complex $X_L$, and
Sections 9--10 contain the construction of $X_L^{(S)}$, its Morse
function, and the connectivity of its level sets.  Sections 11--13
complete the proof of Theorem~\ref{thmb}; they discuss realizing $G_L(S)$ in
terms of fundamental groups, the definition of `sheets' in $X_L^{(S)}$
and the application of Brown's criterion respectively.  Section~14
gives presentations for $G_L(S)$, and Section~15 introduces a
set-valued invariant for a group and a finite sequence of elements,
which may be of independent interest.  These two sections complete the
proof of Theorem~\ref{thma}.  Sections 16--18 give analogues for $G_L(S)$ of
various results that have been obtained for $BB_L$.  In more detail,
Section~16 considers classifying spaces and ordinary cohomology;
Section 17 considers compactly supported and $\ell^2$-cohomology;
Section~18 establishes the existence of uncountably many (non-finitely
presented) Poincar\'e duality groups.  Sections 19~and~20 describe
alternative proofs of some of the results: Section~19 gives a proof
that $G_L(S)$ is $FP_2(\zz)$ whenever $\pi_1(L)$ is perfect that does not
use $X_L^{(S)}$, and Section~20 describes the homotopy type of level
sets in $X_L^{(S)}$ in the case when $\pi_1(L)$ is finite.  Finally,
Section~21 discusses a small number of open problems.

The author first heard the question of whether there can be uncountably 
many groups of type $FP$ from a conversation with Martin Bridson, Peter 
Kropholler, Robert Kropholler and Charles Miller~III.  Martin Bridson 
and the author subsequently discussed the problem, and this work benefitted 
from these discussions and a conversation with Ashot Minasyan.  The author 
thanks these five for their helpful comments on this work.  

\section{Examples of flag complexes} 

To apply Theorems A~and~B, we require certain finite flag complexes $L$. 
The purpose of this section is to establish that such complexes exist.  

A \emph{flag complex} or \emph{clique complex} is a simplicial complex
with the property that any finite set of mutually adjacent vertices
spans a simplex.  The realization of any poset is a flag complex and
so in particular the barycentric subdivision of any simplicial complex
is a flag complex.

If $L$ is connected, then $\widetilde{L}$ is acyclic if and only if
$L$ is aspherical, i.e., if and only if $L$ is an Eilenberg-Mac~Lane
space.  A theorem of Baumslag-Dyer-Heller tells us that for any finite
complex $K$, there is a finite Eilenberg-Mac~Lane space $L$ and a
homology isomorphism $L\rightarrow K$~\cite{BDH,mkt}.  Thus the
assumption that $\widetilde{L}$ is acyclic places no restrictions on
the homology of $L$.  This provides a source of examples for applying
our main theorems.  In particular if $L$ is the Eilenberg-Mac~Lane
space for a non-trivial acyclic group, then Theorems A~and~B imply
that the groups $G_L(S)$ comprise uncountably many isomorphism types
of groups of type $FP(\zz)$.

For a more explicit example, consider the presentation 2-complex 
for Higman's group
\[\langle a,b,c,d\,\, :\,\, a^b=a^2,\, b^c=b^2, \,
c^d=c^2,\, d^a=d^2\rangle.\]
This is a polygonal cell complex consisting of one 0-cell, 
four 1-cells and four pentagonal 2-cells.  
This 2-complex is both acyclic and aspherical, and has non-trivial 
fundamental group~\cite{BDH}.  Any flag triangulation of this
2-complex has nlcp (as in Definition~\ref{defnnlcp}) and can be used in
our main theorems.  For example, the second barycentric subdivision 
of the given polygonal structure is a suitable flag complex $L$ 
consisting of 240 triangles, 336 edges and 97 vertices.  
For other examples of 2-complexes that are acyclic and aspherical, 
see~\cite[section~3]{mkt} or~\cite[section~4]{bg}.

\section{Finiteness properties for groups} 

We briefly recall the definitions of some of the homological
finiteness properties for groups and give some results that we will
use to establish these properties.  The algebraic properties were
introduced by Bieri~\cite{bieri,brown}, $F$ and $F_n$ by
Wall~\cite{wall}, and the $FH$ and $FH_n$ properties
by Bestvina-Brady~\cite{BB}.  Recall that a CW-complex $X$ is said to
be $R$-acyclic if its reduced homology groups satisfy 
$\overline{H}_i(X;R)=0$ for all $i\geq -1$.  Similarly, a CW-complex 
$X$ is said to be $n$-$R$-acyclic if $\overline{H}_i(X;R)=0$ for $i\leq n$.

\begin{definition}\label{deffinone} 
Let $G$ be a discrete group, and let $R$ be a nontrivial ring.    
\begin{itemize} 
\item $G$ is type $F$ if there is a finite $K(G,1)$, or 
equivalently there is a contractible free $G$-CW-complex 
with finitely many orbits of cells.  

\item $G$ is type $FH(R)$ if there is an $R$-acyclic free
  $G$-CW-complex with finitely many orbits of cells.  

\item $G$ is type $FL(R)$ if there is some $n\geq 0$ and a 
long exact sequence of $RG$-modules 
\[0\rightarrow P_n\rightarrow P_{n-1}\rightarrow 
\cdots \rightarrow P_0\rightarrow R\rightarrow 0\]
in which each $P_i$ is a finitely generated free module.  

\item $G$ is type $FP(R)$ if there is a long exact sequence
as above in which each $P_i$ is a finitely generated projective 
module. 
\end{itemize}  
\end{definition} 

\begin{definition}\label{deffintwo} 
Let $G$ be a discrete group, $R$ a non-trivial ring, and $n\geq 0$ 
an integer.  
\begin{itemize} 
\item
$G$ is type $F_n$ if there is a $K(G,1)$ with finite $n$-skeleton.  

\item 
$G$ is type $FH_n(R)$ if there is an $(n-1)$-$R$-acyclic free 
$G$-CW-complex with finitely many orbits of cells.  

\item 
$G$ is type $FL_n(R)$ if there is a long exact sequence of
  $RG$-modules 
\[\rightarrow P_m\rightarrow P_{m-1}\rightarrow 
\cdots \rightarrow P_0\rightarrow R\rightarrow 0\]
in which $P_i$ is a finitely generated free module for $i\leq n$.  

\item $G$ is type $FP_n(R)$ if there is a long exact sequence
as above in which $P_i$ is a finitely generated projective 
module for $i\leq n$.  
\end{itemize}  
\end{definition} 

A contractible space is $R$-acyclic for any $R$, the cellular 
chain complex for a free $G$-CW-complex is a chain complex of 
free modules, and free modules are projective.  From these 
observations it follows that for any $R$, 
$F\implies FH(R)\implies FL(R)\implies FP(R)$, and 
similarly $F_n\implies FH_n(R)\implies FL_n(R)\implies FP_n(R)$.  
It can be shown that $FL_n(R)$ is equivalent to 
$FP_n(R)$, for each $n$ and $R$.  It follows from the universal 
coefficient theorem that whenever there is a ring homomorphism 
$R\rightarrow S$, $FX(R)\implies FX(S)$, where $FX(-)$ denotes 
any of the above finiteness conditions that involves a ring.  
In particular, $FX(\zz)\implies FX(R)$ for any $R$.  For any 
non-trivial ring $R$, $FP_1(R)$ is equivalent to $F_1$, and 
for $n\geq 2$, $FP_n(\zz)$ together with $F_2$ is equivalent 
to $F_n$~\cite[VIII.7.1]{brown}.  

$G$ is said to have cohomological dimension at most $n$ over 
$R$ if there is a projective resolution of $R$ as an $RG$-module 
of length $n$: 
\[0\rightarrow P_n\rightarrow P_{n-1}\rightarrow 
\cdots \rightarrow P_0\rightarrow R\rightarrow 0.\] If $G$ is type
$FP(R)$, then $G$ has finite cohomological dimension, and conversely
if $G$ has cohomological dimension $n$ over $R$ and $G$ is $FP_n(R)$
then $G$ is $FP(R)$.  If $H\leq G$, then the cohomological dimension
of $H$ is a lower bound for that of $G$.  Since non-trivial finite
groups have infinite cohomological dimension, it follows that every
group of type $FP(\zz)$ is torsion-free.

For a general ring $R$, the difference between $FP(R)$ and $FL(R)$ is
well understood in terms of $K$-theory; in particular if every
finitely generated projective $RG$-module is stably free then $G$ is
$FP(R)$ if and only if $G$ is $FL(R)$.  No examples are known that
distinguish between the properties $FP(\zz)$, $FL(\zz)$ and $FH(\zz)$.
For larger rings however, there is a difference.  If $G$ is any finite
group, then $G$ is $FP(\qq)$ by Maschke's theorem, but $G$ is not
$FL(\qq)$ unless $G$ is the trivial group.  No examples are known to
distinguish $FL(\qq)$ and $FH(\qq)$, but again there is a difference
for larger fields.  If $K=\qq[e^{2\pi i/3}]$ is the field obtained by
adjoining a cube root of 1 to~$\qq$, there is a (3-dimensional
crystallographic group) $G$ which is $FL(K)$, $FP(\qq)$ and not
$FL(\qq)$.  Since $FH(K)\Leftrightarrow FH(\qq)$ it follows that this
group cannot be $FH(K)$~\cite{epdn}.

A sequence $\rightarrow A_m\rightarrow A_{m+1}\rightarrow $ of abelian
groups and morphisms indexed by $\nn$ is \emph{essentially trivial} if 
for all $m$ there exists $m'>m$ so that the composite map $A_m\rightarrow
A_{m'}$ is trivial.  The following statement is a simplified version 
of K.~S. Brown's criterion~\cite{browncrit}, which will suffice for 
our purposes.  

\begin{theorem}\label{brownscrit}
Suppose that $X$ is a finite-dimensional $R$-acyclic $G$-CW-complex, 
and that $G$ acts freely except possibly that some vertices have 
isotropy subgroups that are of type $FP(R)$ (resp.~$FP_n(R)$).  
Suppose also that 
$X= \bigcup_{m\in \nn} X(m)$, where $X(m)\subseteq X(m+1)\subseteq 
\cdots\subseteq X$
is an ascending sequence of $G$-subcomplexes, each of which contains
only finitely many orbits of cells.  In this case $G$ is $FP(R)$ 
(resp.~$FP_n(R)$) if and only if for all $i$ (resp.~for all $i<n$) 
the sequence $\overline{H}_i(X(m);R)$ 
of reduced homology groups is essentially trivial.  
\end{theorem}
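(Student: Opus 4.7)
The strategy is to translate the geometric data into a partial resolution of $R$ by $RG$-modules and then use the filtration $\{X(m)\}$ to control finite generation.

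Let $C_*=C_*(X;R)$ be the cellular chain complex with augmentation $C_0\to R\to 0$. By $R$-acyclicity and finite-dimensionality, this is a bounded exact sequence. For $p\ge 1$, $C_p$ is a direct sum of copies of the free module $RG$ indexed by $G$-orbits of $p$-cells; and $C_0$ is a direct sum of permutation modules $R[G/G_v]\cong RG\otimes_{RG_v}R$, one for each vertex orbit. Since each stabilizer $G_v$ is of type $FP(R)$ (resp.\ $FP_n(R)$), Shapiro's lemma together with induction from $RG_v$ to $RG$ supplies a finite-type projective resolution of each $R[G/G_v]$ as an $RG$-module. The filtration induces subcomplexes $C_*^{(m)}=C_*(X(m);R)\subseteq C_*$ with finitely many $G$-orbits of cells in each degree, and $C_*=\mathrm{colim}_m C_*^{(m)}$ in the category of $RG$-chain complexes.

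For the forward direction, assume $G$ is of type $FP_n(R)$. I would invoke the standard characterization that $H_i(G;-)$ commutes with filtered colimits of $RG$-modules for $i<n$. Comparing the vanishing homology of the augmented colimit $C_*\to R$ with $\mathrm{colim}_m \overline{H}_i(X(m);R)$ shows that the directed system vanishes in the colimit. Since each $\overline{H}_i(X(m);R)$ is finitely generated as an $RG$-module (as $C_*^{(m)}$ has finitely many orbits), vanishing in the colimit upgrades to essential triviality of the whole system. For the converse, assume essential triviality of $\{\overline{H}_i(X(m);R)\}$. One builds a finite-type projective resolution of $R$ inductively on degree: at each stage, the essentially trivial hypothesis supplies a larger $X(m')$ in which all $i$-cycles of $C_*^{(m)}$ become boundaries, permitting one to extend the resolution while maintaining finite generation; the finite-type resolutions of the vertex permutation modules allow one to replace the non-free summands of $C_0$ by finitely generated projectives. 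In the $FP(R)$ case, finite-dimensionality of $X$ terminates the construction after finitely many steps.

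The main obstacle is the converse: assembling all these ingredients into a single finite-type projective resolution of $R$ requires careful double-complex bookkeeping, splicing the successively enlarged finite-type portions $C_*^{(m_i)}$ with the auxiliary finite-type resolutions of the permutation modules $R[G/G_v]$, and arranging the choice of the thresholds $m_i$ consistently across all dimensions. This is essentially the argument of \cite{browncrit}; the hypothesis that vertex stabilizers are themselves $FP(R)$ or $FP_n(R)$ (rather than the more common finite or trivial case) is exactly what is needed to make the bottom-degree splicing go through.
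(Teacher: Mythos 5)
The paper itself does not prove this statement: it is quoted as a simplified special case of Brown's criterion from~\cite{browncrit}, so your proposal has to be judged against Brown's argument rather than against anything in the text. Your set-up is the right one (free cellular chains in positive degrees, permutation modules $R[G/G_v]$ in degree zero resolved by inducing finite-type resolutions up from the $FP(R)$ stabilizers, the cocompact filtration), and your outline of the converse --- inductively enlarging $m$ to kill cycles and splicing in the stabilizer resolutions --- is the correct shape of that direction, though you defer all of its substance to~\cite{browncrit}.

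The forward direction, however, contains a genuine gap and, as written, is circular. First, ``$H_i(G;-)$ commutes with filtered colimits'' is not a characterization of $FP_n(R)$: group homology commutes with filtered colimits for \emph{every} group, because $\mathrm{Tor}$ does. The correct characterizations are the dual ones: $H^i(G;-)$ commutes with direct limits for $i\leq n$ (Bieri's criterion, the one invoked in the proof of Proposition~\ref{propbieri}), or $H_i(G;-)$ commutes with arbitrary direct products for $i\leq n$. Second, and more seriously, $\overline{H}_i(X(m);R)$ need not be finitely generated as an $RG$-module merely because $X(m)$ has finitely many orbits of cells: $RG$ is not Noetherian, so the module of $i$-cycles, being the kernel of a map between finitely generated modules, can fail to be finitely generated. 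Concretely, if $Q$ is finitely generated then $H_1$ of its Cayley graph is the relation module of a free presentation, sitting in $0\rightarrow H_1\rightarrow (\zz Q)^d\rightarrow \zz Q\rightarrow\zz\rightarrow 0$, and this is finitely generated over $\zz Q$ if and only if $Q$ is $FP_2(\zz)$. With these two steps as stated, your forward argument never actually uses the hypothesis that $G$ is $FP_n(R)$, and it would show that the system $\overline{H}_i(X(m);R)$ is essentially trivial for \emph{every} $G$ admitting such an $X$ and filtration; by your own converse this would make every such $G$ of type $FP(R)$, which is absurd. The finiteness hypothesis must enter through the dual criterion: one applies $\dirlim H^k(G;-)$ to coefficient systems with vanishing limit built from the filtration (or uses the product criterion in homology together with the stabilizer hypotheses and the equivariant spectral sequence), which is how Brown's proof of this direction actually proceeds.
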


\begin{proposition} \label{propbieri}
If a group $Q$ is a retract of a group $G$, and $G$ is of type 
$FP(R)$ (resp.~type $FP_n(R)$) then so is $Q$.  
\end{proposition}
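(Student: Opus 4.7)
The plan is to invoke Bieri's characterization of $FP_n(R)$: that $G$ is $FP_n(R)$ if and only if, for every set $I$ and every $RG$-module $M$, the canonical comparison $H^k(G;M^I)\to H^k(G;M)^I$ is an isomorphism for $k<n$ and a surjection for $k=n$. Granting this, the whole proof reduces to turning a group-theoretic retract into a retract of the data being tested.

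Write $i:Q\hookrightarrow G$ and $r:G\twoheadrightarrow Q$ with $ri=\mathrm{id}_Q$. For any $RQ$-module $N$, pull back along $r$ to form the $RG$-module $r^*N$; since $i^*r^*N=N$, functoriality of group cohomology produces
\[
H^k(Q;N)\xrightarrow{r^*}H^k(G;r^*N)\xrightarrow{i^*}H^k(Q;N)
\]
with composite $(ri)^*=\mathrm{id}$, exhibiting $H^k(Q;N)$ as a natural direct summand of $H^k(G;r^*N)$. Applying this with $N=\prod_\alpha M_\alpha$ for a family $(M_\alpha)_{\alpha\in I}$ of $RQ$-modules, and using that $r^*$ preserves products, naturality fits everything into a commuting diagram in which the $Q$-comparison map $H^k(Q;\prod_\alpha M_\alpha)\to\prod_\alpha H^k(Q;M_\alpha)$ is a retract of the corresponding $G$-comparison map. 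Since a retract of an isomorphism (respectively a surjection) is an isomorphism (respectively a surjection), $G$ being $FP_n(R)$ forces $Q$ to be $FP_n(R)$.

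For the $FP(R)$ statement, I would further use that $i$ embeds $Q$ in $G$, so $\mathrm{cd}_R(Q)\le\mathrm{cd}_R(G)<\infty$; combining this with $Q$ being $FP_n(R)$ at $n=\mathrm{cd}_R(G)$ gives $FP(R)$ by the standard fact recalled in the excerpt immediately before the proposition. The main obstacle, to my mind, is that this argument leans on Bieri's criterion, which is not stated in the excerpt. A purely chain-complex approach is awkward: restriction along $i$ preserves exactness but destroys finite generation (unless $[G:Q]<\infty$), while extension along $r$ via $-\otimes_{RG}RQ$ preserves finite generation of projectives but is generally not exact, because $RQ$ need not be flat over $RG$ via $r$. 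It is precisely the products-versus-cohomology viewpoint that converts a retract of groups into a retract of the witnesses for $FP_n(R)$.
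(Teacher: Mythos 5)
Your overall architecture -- turn the group retract $Q\rightarrowtail G\twoheadrightarrow Q$ into a retract of the comparison maps appearing in a Bieri-type characterization of $FP_n(R)$, then handle $FP(R)$ via $\mathrm{cd}_R(Q)\le\mathrm{cd}_R(G)$ -- is exactly the paper's strategy, and your closing remarks about why naive chain-complex transfers fail are on point. But the specific criterion you invoke is not a characterization of $FP_n(R)$: for \emph{every} group $G$, every $RG$-module $M$ and every set $I$, the map $H^k(G;M^I)\to H^k(G;M)^I$ is an isomorphism in all degrees. Indeed, for any projective resolution $P_\bullet\to R$ one has $\Hom_{RG}(P_\bullet,\prod_\alpha M_\alpha)\cong\prod_\alpha\Hom_{RG}(P_\bullet,M_\alpha)$ (Hom out of anything preserves products), and cohomology commutes with products of cochain complexes since products of modules are exact. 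So the condition you test is vacuously satisfied and your argument, as written, proves nothing. You have crossed the two genuine forms of the criterion: finiteness is detected by \emph{homology} against direct products (of copies of $RG$, or of arbitrary modules), because $-\otimes_{RG}P$ fails to preserve products when $P$ is not finitely generated; dually, it is detected by \emph{cohomology} against filtered colimits, because $\Hom_{RG}(P,-)$ fails to preserve them.

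The repair is immediate and is what the paper does: use Bieri's direct-limit form, namely that $G$ is $FP_n(R)$ if and only if for every directed system $M_*$ of $RG$-modules with $\dirlim M_*=0$ one has $\dirlim H^k(G;M_*)=0$ for all $k\le n$. Given a directed system of $RQ$-modules with vanishing colimit, inflate it along $r$ to a directed system of $RG$-modules (the colimit is still zero, as the underlying abelian groups are unchanged), and observe that the identity of $\dirlim H^k(Q;M_*)$ factors through $\dirlim H^k(G;r^*M_*)=0$ via exactly the maps $r^*$ and $i^*$ you wrote down. Alternatively your retract diagram works verbatim with the homological product criterion. Either way the rest of your argument, including the reduction of $FP(R)$ to $FP_n(R)$ for $n=\mathrm{cd}_R(G)$, then goes through and coincides with the paper's proof.
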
 

\begin{proof} 
A group $\Gamma$ is $FP(R)$ if and only if $\Gamma$ has finite 
cohomological dimension, $n$, over $R$ and $\Gamma$ is type $FP_n(R)$.  
Since $Q$ is isomorphic to a subgroup of $G$, the cohomological 
dimension of $G$ is an upper bound for the cohomological dimension 
for $Q$.  Thus the claim for type $FP(R)$ will follow from the claim 
for type $FP_n(R)$ for all $n\in \nn$.  

According to~\cite[Theorem~1.3]{bieri}, a group $\Gamma$ is $FP_n(R)$
if and only if the following condition holds: for any directed system
$M_*$ of $R\Gamma$-modules with $\dirlim M_*=0$, one has that $\dirlim
H^k(\Gamma;M_*)=0$ for all $k\leq n$.  Now suppose that $M_*$ is a
directed system of $RQ$-modules such that $\dirlim M_*=0$, and that we
have maps $Q\rightarrowtail G\twoheadrightarrow Q$ whose composite is
the identity.  The surjection $G\twoheadrightarrow Q$ enables us to 
view $M_*$ as a directed system of $RG$-modules, and the direct limit 
of this system is still zero, since its underlying abelian group is 
unchanged.  To see that for each $k\leq n$, $\dirlim H^k(Q;M_*)=0$, 
note that the identity map of this group factors through 
$\dirlim H^k(G;M_*)$, which is zero because $G$ is type $FP_n(R)$.  
\end{proof}

\section{Classifying spaces for right-angled Artin groups}

Recall that a flag complex is a simplicial complex such that every finite 
mutually adjacent set of vertices spans a simplex.  
The right-angled Artin group $A_L$ associated to a flag complex $L$
is the group given by a presentation with generators the vertex set 
$L^0$ of $L$, subject only to the relations that the ends of each 
edge commute.  
\[A_L=\langle v\in L^0\,\,\,:\,\,\, [v,w]=1 
\,\,\hbox{for all $\{v,w\}\in L^1$}\,\rangle\] 
For example, if $L$ is an $n$-simplex then $A_L$ is 
free abelian of rank $n+1$, while if $L$ is 0-dimensional, $A_L$ 
is a free group of rank $|L^0|$.  

For a vertex $v\in L^0$, let $\ttt_v$ be a copy of the circle
$\rr/\zz$, and give $\ttt_v$ a cellular structure with one 0-cell
corresponding to the image of $\zz$ and one 1-cell.  For a simplex
$\sigma=(v_0,\ldots,v_n)$, let $\ttt_\sigma$ be the direct product
$\ttt_\sigma = \prod_{i=0}^n \ttt_{v_i}$.  If $\tau$ is a face of
$\sigma$ then $\ttt_\tau$ can be identified with a subcomplex of
$\ttt_\sigma$:
\[\ttt_\tau \cong \prod_{v\in \tau} \ttt_v \times \prod_{v\in \sigma - \tau} 
\{v\}.\]

Now let $\Delta$ be a simplex with the same vertex set as $L$, and 
view $L$ as a subcomplex of $\Delta$.  For each simplex $\sigma$ of 
$L$, we can view $\ttt_\sigma$ as a subtorus of $\ttt_\Delta$.  Define
the Salvetti complex $\ttt_L$ to be the union of these tori:  
\[\ttt_L= \bigcup_{\sigma\in L} \ttt_\sigma \subseteq \ttt_\Delta.\] 
The given cell structure on each $\ttt_v$ induces a cell structure
on $\ttt_L$, with one 0-cell and for each $n>0$, 
one $n$-cell for each $(n-1)$-simplex of $L$. 
Furthermore, the $n$-cell corresponding to the $(n-1)$-simplex $\sigma$
is naturally a copy of the $n$-cube $I^n$, in such a way that the 
$n$-torus $\ttt_\sigma$ is obtained by identifying opposite faces of $I^n$.  
The 2-skeleton of $\ttt_L$ is the presentation 2-complex for the defining
presentation of $A_L$, and $\ttt_L$ is an Eilenberg-Mac~Lane space 
$K(A_L,1)$.  
If we identify the circle $\ttt_v$ with the group 
$\rr/\zz$, so that the zero cell is the group identity, 
then we obtain a map $l_\Delta:\ttt_\Delta\rightarrow \rr/\zz$ defined by 
\[(t_1,\dots,t_n)\mapsto t_1+\cdots +t_n \quad\hbox{for all 
$(t_1,\ldots,t_n)\in \ttt_\Delta$}.\]
This map restricts to $\ttt_L\subseteq \ttt_\Delta$ to define 
a map $l_L:\ttt_L\rightarrow \rr/\zz$.  The induced map  
on fundamental groups is the homomorphism $l_*:A_L\rightarrow \zz$ that 
sends each of the standard generators for $A_L$ to $1\in \zz$, and 
the kernel of this homomorphism is by definition the Bestvina-Brady
group $BB_L$.  For each $v\in L^0$, the map $l_L$ restricts to a 
homeomorphism $l_v:\ttt_v\rightarrow \rr/\zz$.  

The link of the point 0 in $\rr/\zz$ is a copy of the 0-sphere, 
corresponding to travelling in the positive and negative directions
along $\rr$.   Similarly, the link of the 0-cell in $\ttt_v$ is a 
copy of the 0-sphere, which we shall denote by $\sss(v)=\{v^+,v^-\}$.  
Under the isomorphism of links induced by the homeomorphism 
$l_v:\ttt_v\rightarrow \rr/\zz$, the point $v^+$ (resp.~$v^-$) 
corresponds to the positive 
direction (resp.~negative direction).  The link of the vertex in 
$\ttt_\Delta$ is the $(|L^0|-1)$-sphere $\sss(\Delta)$, equal to the 
join $\sss(\Delta)= \bigast_{v\in \Delta} \sss(v)$.  
For each simplex $\sigma$ of $L$, 
there is an inclusion $\sss(\sigma)\subseteq \sss(\Delta)$, and the link 
$\sss(L)$ of the unique vertex of $\ttt_L$ is equal to the union of these 
images: 
\[\sss(L) = \bigcup_{\sigma\in L} \sss(\sigma) \subseteq \sss(\Delta).\] 

\section{A Morse function on $X_L$} 

Let $X=X_L$ denote the universal cover of $\ttt_L$, and let
$f=f_L:X_L\rightarrow \rr$ be the map of universal covers induced by
$l:\ttt_L\rightarrow \rr/\zz$.  Each lift of each cell of $\ttt_L$ is
an embedded cube in $X$, and so $X$ is a cubical complex, in which the
link of each vertex is isomorphic to $\sss(L)$.  Since $\sss(L)$ is a flag
complex (Proposition~\ref{propflag}), it follows that the path metric
on $X$ defined by using the standard Euclidean metric on each cube to
measure lengths of PL paths is CAT(0).  This metric gives one way to 
prove that $\ttt_L$ is a $K(A_L,1)$~\cite{brihae}.  

The map $f:X_L\rightarrow \rr$ has the following properties: its
restriction to each $n$-cube (identified with $[0,1]^n\subseteq
\rr^n$) is an affine map; the image of each vertex is in $\zz$; the
image of each $n$-cube is an interval of length~$n$.  Any map from a
cubical complex to $\rr$ having these properties will be called a
\emph{Morse function}; the definition can be made more general, but this 
will suffice for our purposes.  For a Morse function $f:X\rightarrow \rr$ and
$v$ a vertex of $X$, the ascending or $\upw$-link $\Lk^\upw_X(v)$ is
the subcomplex of $\Lk_X(v)$ coming from the cubes $C$ containing $v$
for which $f:C\rightarrow \rr$ attains its minimum at $v$; the
descending or $\downw$-link $\Lk^\downw_X(v)$ is defined similarly with
`minimum' replaced by `maximum'.  The Morse function $f:X_L\rightarrow
\rr$ has the property that for each vertex $v$, both $\Lk^\upw_X(v)$
and $\Lk^\downw_X(v)$ are naturally identified with $L$, being
the full subcomplexes of $\sss(L)$ spanned by $\{v^+;v\in L^0\}$ and
$\{v^-;v\in L^0\}$ respectively.  We will use the following theorem 
concerning Morse functions on cubical complexes~\cite{BB}.  

\begin{theorem} \label{thmmorse} 
Let $g:Y\rightarrow \rr$ be a Morse function on a cubical complex, 
let $a<b<c<d\in \rr$, and define $Y_{[b,c]}=g^{-1}([b,c])$ and
similarly $Y_{[a,d]}=g^{-1}([a,d])$.  Up to homotopy $Y_{[a,d]}$ 
is obtained from $Y_{[b,c]}$ by coning off a copy of $\Lk\downw_Y(v)$ 
for each vertex $v$ with $g(v)\in (c,d]$ and coning off a copy of 
$\Lk^\upw_Y(v)$ for each vertex $v$ with $g(v)\in [a,b)$.  
\end{theorem}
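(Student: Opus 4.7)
The plan is to reduce the statement to a one-sided filtration argument and to process vertex crossings one height at a time. Since negating the Morse function exchanges ascending and descending links, it suffices to prove that $Y_{[b,d]}$ is obtained from $Y_{[b,c]}$ up to homotopy by coning off a copy of $\Lk^\downw_Y(v)$ for each vertex $v$ with $g(v)\in(c,d]$; applying the same argument to $-g$ then extends $Y_{[b,d]}$ up to homotopy to $Y_{[a,d]}$ by coning off copies of $\Lk^\upw_Y(v)$ for each vertex $v$ with $g(v)\in[a,b)$.

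For the upward half, I would study the family of sublevel sets $Y(s):=g^{-1}([b,s])$ as $s$ grows from $c$ to $d$. Two observations drive the proof. First, whenever $[s,s']\subseteq [c,d]$ contains no value $g(v)$ for any vertex $v$, the inclusion $Y(s)\hookrightarrow Y(s')$ is a homotopy equivalence. Second, as $s$ crosses an integer height $t\in(c,d]$, the change in $Y(s)$ is, up to homotopy, the attachment of one cone on $\Lk^\downw_Y(v)$ for each vertex $v$ with $g(v)=t$. Iterating these two steps across the finitely many vertex heights in $(c,d]$ yields the required description.

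The first observation follows by a cube-wise straight-line retraction. On each $n$-cube $C$, identify $C$ with $[0,1]^n$ so that $g|_C(x_1,\ldots,x_n)=m+x_1+\cdots+x_n$ for some $m\in\zz$. The slab $g^{-1}([s,s'])\cap C$ retracts affinely along the constant gradient direction onto $g^{-1}(\{s\})\cap C$. Because the gradient direction restricts consistently to faces, these cube-wise retractions are compatible on overlaps and glue to a global deformation retraction of $Y(s')$ onto $Y(s)$.

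The second observation is the heart of the argument and is where the main technical obstacle lies. The crucial feature of the affine form of $g$ on each cube is that each cube has a \emph{unique} vertex at which $g$ attains its maximum. Given $t\in(c,d]$ and the finitely many vertices $v_1,\dots,v_k$ at height $t$, define the closed descending star $\Star^\downw_Y(v_i)$ to be the union of all closed cubes $C$ containing $v_i$ on which $g|_C$ attains its maximum at $v_i$. The unique-maximum property forces the $\Star^\downw_Y(v_i)$ to be pairwise disjoint above height $t-\tfrac12$, so they may be handled independently. Each $\Star^\downw_Y(v_i)$ is canonically a geometric cone on $\Lk^\downw_Y(v_i)$ with apex $v_i$, and the intersection $\Star^\downw_Y(v_i)\cap Y(t-\tfrac12)$ is a collar of $\Lk^\downw_Y(v_i)$ which deformation retracts (again cube-by-cube, using the affine form of $g$) onto a copy of $\Lk^\downw_Y(v_i)$. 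Verifying this deformation-retract property cleanly is the one point requiring real care; everything else is packaging, and the resulting pushout description shows that $Y(t)\simeq Y(t-\tfrac12)$ with one cone on each $\Lk^\downw_Y(v_i)$ attached, as required.
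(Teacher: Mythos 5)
First, note that the paper does not prove this statement itself: it is quoted from Bestvina--Brady \cite{BB}, where it is the combination of their Lemmas 2.3 and 2.5, so your argument should be measured against that one. Your overall strategy (split into an upward and a downward half, sweep the level, treat vertex-free slabs as products, attach descending stars at vertex heights) is the right one, but your second step has a genuine gap. You assert that the change from $Y(t-\tfrac12)$ to $Y(t)$ is accounted for by the descending stars of the height-$t$ vertices, i.e.\ implicitly that
\[
Y(t)\;=\;Y(t-\tfrac12)\;\cup\;\bigcup_i\bigl(\Star^\downw_Y(v_i)\cap Y(t)\bigr).
\]
This is false. A point of height in $(t-\tfrac12,t]$ may lie only in cubes whose maximum vertex sits strictly above height $t$, and such a point belongs to no $\Star^\downw_Y(v_i)$. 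Concretely, in $\rr^2$ with the standard cubulation and $g(x_1,x_2)=x_1+x_2$, take $t=0$ and the point $(0.45,-0.55)$: its support is the square $[0,1]\times[-1,0]$, whose maximum vertex $(1,0)$ has height $1$, so every cube containing the point is maximized at a vertex of height at least $1$ and the point lies in the descending star of no height-$0$ vertex, yet its height $-0.1$ exceeds $-\tfrac12$. Hence there is no pushout of the kind you describe. What is actually needed is a deformation retraction of $g^{-1}([b,t])$ onto $g^{-1}([b,t-\tfrac12])\cup\bigcup_i\bigl(\Star^\downw_Y(v_i)\cap g^{-1}([t-\tfrac12,t])\bigr)$ that fixes the descending stars and pushes everything else down, constructed cube by cube compatibly with face inclusions; that construction is the real content of \cite[Lemma~2.5]{BB} and cannot be dismissed as packaging.

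A smaller but related problem occurs in your first observation. On a cube $C\cong[0,1]^n$ with $g|_C=m+x_1+\cdots+x_n$, the straight-line flow in the direction $-(1,\ldots,1)$ exits $C$ (for instance from $(0,0.7)\in[0,1]^2$), so the slab $g^{-1}([s,s'])\cap C$ does not retract ``affinely along the constant gradient direction'' onto $g^{-1}(s)\cap C$. The claim itself is true --- a vertex-free slab in a cube is a product over a level set --- but the retraction must bend at the faces and be chosen naturally in $C$ so that the cube-wise maps glue; this is \cite[Lemma~2.3]{BB}. Both halves of your outline therefore need the same missing ingredient: an explicit, face-compatible cube-wise retraction.
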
 

Let $\tilde l:X_L/BB_L\rightarrow \rr$ be the map induced by
$f:X\rightarrow \rr$, or equivalently the pullback of the maps
$l:\ttt_L\rightarrow \rr/\zz$ and $\rr\rightarrow \rr/\zz$.  Note that
the map $\tilde l$ gives a bijection between the vertex set of
$X_L/BB_L$ and $\zz$.

\section{$L$ and $\sss(L)$} 

\begin{proposition} 
For a simplicial complex $L$ without isolated vertices, the 
following properties are equivalent: 
\begin{itemize} 
\item Every 1-simplex is contained in a 2-simplex and every vertex 
link is connected; 

\item The topological realization $|L|$ has no local cut point.  
\end{itemize} 
\end{proposition}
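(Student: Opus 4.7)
The plan is to reduce both conditions to the single assertion that for every $p \in |L|$, the geometric link of $p$ in $L$ is non-empty and connected. First I would recall that every $p \in |L|$ admits a fundamental system of PL neighborhoods each homeomorphic to the open cone on the geometric link $\Lk_L(p)$ with $p$ at the cone point; removing $p$ from such a neighborhood deformation retracts onto $\Lk_L(p)$. Under the no-isolated-vertices hypothesis $\Lk_L(p)$ is always non-empty, so ``$p$ is not a local cut point of $|L|$'' is equivalent to ``$\Lk_L(p)$ is connected'', and $|L|$ has no local cut point iff this holds for every $p$.

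Next I would compute $\Lk_L(p)$ stratum by stratum. If $\sigma$ is the open simplex of $L$ containing $p$, then $\Lk_L(p)$ is naturally identified with the join $\partial\sigma \ast \Lk_L(\sigma)$, where $\partial\sigma$ is the simplicial boundary of $\sigma$ (PL-homeomorphic to $S^{\dim\sigma - 1}$, with the convention that $S^{-1} = \emptyset$) and $\Lk_L(\sigma)$ is the simplicial link of $\sigma$ in $L$. When $\dim\sigma = 0$, so that $p$ is a vertex $v$, one has $\Lk_L(p) = \Lk_L(v)$, and the condition at $p$ is exactly that every vertex link be connected. When $\dim\sigma = 1$, so that $p$ lies in the interior of an edge $e$, $\Lk_L(p) = S^0 \ast \Lk_L(e)$ is the unreduced suspension of $\Lk_L(e)$, which is connected iff $\Lk_L(e) \neq \emptyset$ iff $e$ is contained in some $2$-simplex. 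When $\dim\sigma \geq 2$, $\partial\sigma$ is a sphere of dimension at least $1$; if $\Lk_L(\sigma) = \emptyset$ then $\Lk_L(p) = \partial\sigma$ is connected, and if $\Lk_L(\sigma) \neq \emptyset$ the join of two non-empty spaces is path-connected, so $\Lk_L(p)$ is connected in any case.

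Combining the three strata, the requirement that $\Lk_L(p)$ be connected for every $p \in |L|$ reduces precisely to the conjunction of ``every vertex link is connected'' and ``every $1$-simplex is contained in a $2$-simplex''; the higher-dimensional strata contribute no constraint. This yields the equivalence. The only mildly delicate ingredient is the opening reduction from local cut points to connectedness of the geometric link, which relies on the standard PL-conical structure of small neighborhoods in a simplicial complex; the rest is a direct bookkeeping exercise with joins.
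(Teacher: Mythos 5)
Your proof is correct and follows the same strategy as the paper's (which is much terser): stratify $|L|$ by the open simplex carrying each point, use the local cone structure to reduce ``local cut point'' to disconnectedness of the geometric link, and observe that only the vertex and open-edge strata impose conditions. The join decomposition $\partial\sigma \ast \Lk_L(\sigma)$ is a clean way to make the paper's one-line dismissal of the strata of dimension $\geq 2$ precise.
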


\begin{proof} 
For $n\geq 2$ the $n$-simplex has no local cut points; it follows 
that the only possible local cut points are either vertices of $L$ 
or points in an edge of $L$ that is not contained in any triangle. 
A non-isolated vertex is a local cut point if and only if its link
is not connected, and the midpoint of an isolated edge is always 
a local cut point.  
\end{proof} 

\begin{definition}\label{defnnlcp} 
Say that $L$ has nlcp if it satisfies the equivalent conditions 
listed above.  
\end{definition} 

\begin{lemma} \label{lemnlpc}
For any flag complex $L$ without isolated vertices, there is a 
flag complex $M=M(L)$ with $L\subseteq M$, and having the 
following properties.  
\begin{enumerate} 
\item $M$ has nlcp; 

\item $L$ is a full subcomplex of $M$; 

\item The inclusion $L\subseteq M$ is a homotopy equivalence; 

\item Provided that $L$ is at least 2-dimensional, $M$ has the same
  dimension as $L$.   
\end{enumerate} 

The construction $L\mapsto M(L)$ can be chosen to be functorial 
for maps $L_1\rightarrow L_2$ that do not collapse any simplex.  
\end{lemma}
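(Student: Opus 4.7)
My plan is to build $M(L)$ by adjoining to $L$ a small collection of new vertices and $2$-simplices that remedy the two possible failures of nlcp — edges lying in no triangle, and vertices with disconnected link — while keeping $L$ full, the homotopy type unchanged, and the dimension controlled. I would proceed in two stages. In the first stage, for each $1$-simplex $e=\{u,v\}$ of $L$ that lies in no $2$-simplex of $L$, introduce a new vertex $w_e$ adjacent only to $u$ and $v$; taking the flag closure adds precisely the triangle $\{u,v,w_e\}$ and its two new edges. Call the result $L'$. In the second stage, for each vertex $v$ of $L$ with $\Lk_L(v)$ disconnected, fix a spanning tree on the set of components of $\Lk_L(v)$ (by some canonical rule based on a well-ordering of $L$) and, for each edge $\{C,C'\}$ of this tree, introduce a new vertex $c_{v,\{C,C'\}}$ adjacent to $v$ and to one distinguished vertex $u_C\in C$ and one distinguished vertex $u_{C'}\in C'$. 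Since $u_C$ and $u_{C'}$ lie in distinct components of $\Lk_L(v)$, they are non-adjacent, and the flag closure adds only the two triangles $\{v,c_{v,\{C,C'\}},u_C\}$ and $\{v,c_{v,\{C,C'\}},u_{C'}\}$ together with their new edges. Call the result $M$.

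The verification of the four properties is then direct. Every edge of $M$ lies in a $2$-simplex: old good edges by hypothesis, old bad edges via the stage-one triangle, and new edges by the triangle they were introduced in. Every vertex link is connected: $\Lk_M(w_e)$ is just an edge; $\Lk_M(c_{v,\{C,C'\}})$ is a two-edge path through $v$; and for $v\in L^0$ the new $c$-vertices supply a path in $\Lk_M(v)$ between each spanning-tree pair of components of $\Lk_L(v)$, so the whole link becomes connected. No edge is introduced among vertices of $L$, so $L$ is a full subcomplex. The inclusion $L\hookrightarrow M$ is a homotopy equivalence because each added vertex sits in a subcomplex that admits an elementary collapse onto its intersection with $L$ (collapse $\{u,v,w_e\}$ across $\{u,w_e\}$, then the edge $\{v,w_e\}$ across $w_e$; similarly collapse each $c$-triangle across its free face and then the remaining edge). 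Finally, since all new simplices are at most $2$-dimensional, $\dim M=\dim L$ whenever $\dim L\ge 2$.

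The main obstacle, as I see it, is setting up the functoriality in a way compatible with the canonical choices made above. A simplicial map $f\colon L_1\to L_2$ that does not collapse simplices is locally injective and sends components of $\Lk_{L_1}(v)$ into components of $\Lk_{L_2}(f(v))$, but it may send distinct components into the same component, and it may carry an edge that is bad in $L_1$ to an edge that is good in $L_2$; the same kind of ambiguity can affect spanning-tree edges. I would handle this by choosing all auxiliary data (spanning trees, distinguished vertices, and tie-breaking triangles for good edges) from a single global well-ordering of the simplices of a universal host complex, so that each defining datum for $M(L_1)$ has a well-defined image under $f$. Then $M(f)$ is defined by sending $w_e$ to $w_{f(e)}$ when $f(e)$ is bad and to a canonical vertex of the chosen triangle containing $f(e)$ otherwise, and by sending $c_{v,\{C,C'\}}$ to the corresponding $c$-vertex at $f(v)$ when $f$ keeps the two components separated and to a canonical vertex of $\Lk_{L_2}(f(v))$ otherwise. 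Verifying that $M(f)$ is simplicial then reduces to the local injectivity of $f$, which guarantees that the required triangles are indeed present in $M(L_2)$.
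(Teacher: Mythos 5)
Your construction of $M$ itself is correct and genuinely different from the paper's: you repair the two failures of nlcp locally (a new apex $w_e$ over each triangle-free edge, and connector vertices $c_{v,\{C,C'\}}$ realizing a spanning tree on the components of each disconnected link), whereas the paper takes $M(L)$ to be a canonical flag triangulation of the mapping cylinder of $L^1\hookrightarrow L$ inside a prism triangulation $N(L)$ of $|L|\times[0,1]$. Your verification of properties (1)--(4) is sound: the flag closures add exactly the triangles you list, no new edges join old vertices (so $L$ stays full), $M$ collapses to $L$ through the free faces you name, and all new simplices have dimension at most $2$. Your version is more economical than the paper's; what it gives up is canonicity, and that is where the proof has a genuine gap.

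The functoriality claim is not established, and the proposed repair does not work. There is no ``universal host complex'' containing all finite flag complexes from which to draw a global well-ordering, and even granting some well-ordering of an ambient vertex set, the auxiliary choices (spanning trees on $\pi_0$ of links, distinguished vertices $u_C$) at $v\in L_1$ and at $f(v)\in L_2$ need not correspond under $f$. Concretely: if $\Lk_{L_1}(v)$ has components $C_1,C_2,C_3$ with tree edges $\{C_1,C_2\},\{C_2,C_3\}$, and $f$ carries these to three distinct components $D_1,D_2,D_3$ of $\Lk_{L_2}(f(v))$ whose chosen tree is $\{D_1,D_2\},\{D_1,D_3\}$, then $M(L_2)$ contains no connector for $\{D_2,D_3\}$, and a simplicial image of $c_{v,\{C_2,C_3\}}$ would have to span triangles with both $\{f(v),f(u_{C_2})\}$ and $\{f(v),f(u_{C_3})\}$; no such vertex need exist. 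The only escape is to collapse (e.g.\ send $c$ to $f(v)$), but then $M(f)$ collapses simplices and the case-by-case rules you describe are not compatible with composition, so $M(g\circ f)=M(g)\circ M(f)$ fails in general; even for an isomorphism $f$ that permutes the components, $M(f)$ need not be defined as an isomorphism. The paper's construction sidesteps all of this because $N(L)$ is defined purely combinatorially from the poset of simplices of $L$ with no choices, so a non-collapsing simplicial map induces a map of the $N$'s (and of the full subcomplexes $M$'s) for free. To salvage your approach you would need to eliminate every choice, e.g.\ by adding a connector for \emph{every} pair of vertices in distinct components of each link rather than for a spanning tree, and an apex over \emph{every} edge rather than only the bad ones, and then re-examine how these canonical gadgets map under $f$; as written, the functoriality argument is a gap.
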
 

\begin{proof} 
Any triangulation of $|L|\times [0,1]$ for which $|L|\times\{0\}$ 
is realized as a full subcomplex isomorphic to $L$ will have 
properties (1)--(3).  For property~(4) to hold as well, we instead  
take $M(L)$ to be a suitable triangulation of the mapping cylinder 
of the inclusion of $L^1$ in $L$.  It is convenient to 
define $M(L)$ as a full subcomplex
of a triangulation $N(L)$ of $|L|\times [0,1]$, so that 
\[|M(L)|= |L|\times\{0\}\cup |L^1|\times [0,1]  \subseteq 
|N(L)|=|L|\times  [0,1].\]  
A construction for $N(L)$ appears implicitly in a well-known 
proof of excision for singular homology~\cite[theorem~2.20]{hatcher}. 
The vertices of $N(L)$ are the vertices of $L$, together with the 
simplices of $L$, distinguishing between the vertex $v$ and the 
0-simplex $\{v\}$.  For $-1\leq k\leq n$, a set 
$\{v_0,\ldots, v_k,\sigma_{k+1}\ldots,\sigma_n\}$ 
of $k+1$ vertices and $n-k$ simplices of $L$ spans a simplex 
of $N(L)$ if and only if the following conditions hold: 
$v_0,\ldots,v_k$ span a simplex of $L$; the simplices 
$\sigma_{k+1},\ldots,\sigma_n$ are totally ordered by 
inclusion; each $v_i$ is a vertex of each $\sigma_j$.  
There is a homeomorphism from $|N(L)|$ to $|L|\times [0,1]$ 
that sends $v$ to $(v,0)$ and sends $\sigma$ to $(\hat\sigma,1)$, 
where $\hat\sigma$ denotes the barycentre in $|L|$ of the 
the simplex $\sigma$.   
It is easily checked that this construction has properties 
(1)--(3), and is functorial for simplicial maps that do not 
collapse any simplex.  Furthermore if $M(L)$ is defined to 
be the full subcomplex of $N(L)$ with vertex set the vertices 
of $L$ together with the 0-~and 1-simplices of $L$, then 
$M(L)$ is a triangulation of $|L|\times \{0\}\cup |L^1|\times [0,1] 
\subseteq |L|\times [0,1]$, 
and $M(L)$ has properties (1)--(4).  
\end{proof} 

\begin{remark} Note that the set of embeddings $L\subseteq M$ 
having properties (1)--(4) is \emph{directed}, in the sense 
that if $L\subseteq M_1$ and $L\subseteq M_2$ have these 
properties then so does $L\subseteq M_1\cup_LM_2$.  
\end{remark} 

\begin{proposition}\label{propflag}
$L$ is a flag complex if and only if $\sss(L)$ is a flag complex.  
$L\subseteq M$ is a full subcomplex if and only if $\sss(L)\subseteq \sss(M)$
is a full subcomplex.  
\end{proposition}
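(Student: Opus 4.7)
The plan is to identify the simplices of $\sss(L)$ explicitly in terms of simplices of $L$, and then read off both claims as essentially tautological. First I would record the following structural fact: a collection $\{v_0^{\epsilon_0},\ldots,v_n^{\epsilon_n}\}$ of vertices of $\sss(\Delta)$ (with the $v_i\in L^0$ distinct and each $\epsilon_i\in\{+,-\}$) spans a simplex of $\sss(L)$ if and only if $\{v_0,\ldots,v_n\}$ spans a simplex of $L$. One direction holds because if $\sigma=\{v_0,\ldots,v_n\}$ is in $L$ then $\sss(\sigma)$, being the full join $\sss(v_0)\ast\cdots\ast\sss(v_n)$, is a subcomplex of $\sss(L)$ that contains every sign choice. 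The other direction holds because any simplex of $\sss(L)$ is, by definition, contained in some $\sss(\sigma)$ with $\sigma\in L$, and the vertex set of $\sss(\sigma)$ consists exactly of the signed vertices $v^\pm$ for $v\in\sigma$.

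For the first equivalence, suppose $L$ is flag and take a pairwise adjacent set $\{v_0^{\epsilon_0},\ldots,v_n^{\epsilon_n}\}$ in $\sss(L)$. Pairwise adjacency forces the $v_i$ to be distinct (since $v^+$ and $v^-$ are not joined in $\sss(\Delta)$) and forces each $\{v_i,v_j\}$ to be an edge of $L$, by the structural fact applied to edges. Flagness of $L$ then gives that $\{v_0,\ldots,v_n\}$ is a simplex of $L$, whence $\{v_0^{\epsilon_0},\ldots,v_n^{\epsilon_n}\}$ spans a simplex of $\sss(L)$. Conversely, assuming $\sss(L)$ is flag and given a pairwise adjacent set $\{v_0,\ldots,v_n\}$ in $L$, lift it to $\{v_0^+,\ldots,v_n^+\}$ in $\sss(L)$; the edges $\{v_i,v_j\}$ of $L$ give adjacencies in $\sss(L)$, so by flagness this lift spans a simplex of $\sss(L)$, and the structural fact shows $\{v_0,\ldots,v_n\}\in L$.

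For the second equivalence I would apply the structural fact to both $L$ and $M$ simultaneously. Assuming $L\subseteq M$ is full and given a simplex $\{v_0^{\epsilon_0},\ldots,v_n^{\epsilon_n}\}$ of $\sss(M)$ with all vertices in $\sss(L)$, we have $v_i\in L^0$ for each $i$ and $\{v_0,\ldots,v_n\}\in M$; fullness yields $\{v_0,\ldots,v_n\}\in L$, so the simplex lies in $\sss(L)$. Conversely, if $\sss(L)\subseteq\sss(M)$ is full and $\{v_0,\ldots,v_n\}$ is a simplex of $M$ with $v_i\in L^0$, lifting to $\{v_0^+,\ldots,v_n^+\}$ produces a simplex of $\sss(M)$ with all vertices in $\sss(L)$, so by fullness it lies in $\sss(L)$ and therefore $\{v_0,\ldots,v_n\}\in L$.

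There is no real obstacle here; the only point requiring a moment's care is the structural identification of the simplices of $\sss(L)$ inside $\sss(\Delta)$, and in particular the observation that $v^+$ and $v^-$ are never adjacent in $\sss(\Delta)$ (they are the two vertices of the $0$-sphere factor $\sss(v)$, which is a pair of non-adjacent points in the join decomposition). Once that is on the table, both equivalences become direct translations between simplices of $L$ and the $\pm$-decorated simplices of $\sss(L)$.
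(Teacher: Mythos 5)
Your proof is correct, and the key step --- the identification of the simplices of $\sss(L)$ as exactly the sets $\{v_0^{\epsilon_0},\ldots,v_n^{\epsilon_n}\}$ with $\{v_0,\ldots,v_n\}$ a simplex of $L$, together with the observation that $v^+$ and $v^-$ are never adjacent --- is precisely the standard argument. The paper itself gives no argument here, deferring the first statement to \cite[lemma~5.8]{BB} and asserting the second is ``proved similarly''; your write-up is a faithful, self-contained version of what that citation contains, handling both statements uniformly via the same structural fact.
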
 

\begin{proof} 
See~\cite[lemma~5.8]{BB} for the first statement.  The second
statement is proved similarly.  
\end{proof}

\begin{proposition}\label{propretr}
For any $L$, $L$ is a retract of $\sss(L)$.  
\end{proposition}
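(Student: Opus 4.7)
The plan is to exhibit an explicit simplicial retraction. Recall that each vertex of $\sss(L)$ has the form $v^\epsilon$ with $v \in L^0$ and $\epsilon \in \{+,-\}$, and that a subset $\{v_0^{\epsilon_0},\ldots,v_k^{\epsilon_k}\}$ spans a simplex of $\sss(L)$ precisely when the $v_i$ are distinct and $\{v_0,\ldots,v_k\}$ spans a simplex of some $\sss(\sigma)$, equivalently a simplex of $L$. Under this description, the full subcomplex of $\sss(L)$ spanned by the positive vertices $\{v^+ : v\in L^0\}$ is canonically isomorphic to $L$, and this is the copy of $L$ in $\sss(L)$ that was already identified with $\link^\upw_X(v)$ in the paragraph introducing $\sss(L)$.

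First I would define a map on vertices by $r(v^+)=v^+$ and $r(v^-)=v^+$, i.e., $r$ forgets the sign. To see that $r$ extends to a simplicial map $\sss(L)\to L$, take any simplex $\{v_0^{\epsilon_0},\ldots,v_k^{\epsilon_k}\}$ of $\sss(L)$; its image is $\{v_0^+,\ldots,v_k^+\}$. Since the $v_i$ are distinct vertices of $L$ spanning a simplex of $L$, the image spans a simplex of our chosen copy of $L$ inside $\sss(L)$. Hence $r$ is a simplicial map into $L$.

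Finally, I would observe that $r$ restricts to the identity on $L$: for any vertex $v^+$ of the embedded $L$ we have $r(v^+)=v^+$, and a simplicial map determined by its values on vertices therefore acts as the identity on all of $L$. This gives the required retraction $\sss(L)\twoheadrightarrow L$.

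There is essentially no obstacle here: the only thing to verify is that the sign-forgetting map sends simplices of the join structure $\sss(L)$ to simplices of $L$, which is immediate from the definition $\sss(L)=\bigcup_{\sigma\in L}\sss(\sigma)$ and the fact that the projection $\sss(\sigma)\to\sigma$ collapses each factor $\sss(v)$ to its positive point.
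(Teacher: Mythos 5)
Your proof is correct and is essentially the paper's argument: the paper uses the projection $\pi(v^{\epsilon})=v$ together with a section $v\mapsto v^{g(v)}$ (of which your positive inclusion is the case $g\equiv +$), while you package the same two maps as a single idempotent sign-forgetting retraction onto the positive copy of $L$. No gap.
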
 

\begin{proof} 
For any function $g:L^0\rightarrow \{+,-\}$, the map 
$v\mapsto v^{g(v)}$ induces a simplicial map $L\rightarrow \sss(L)$ 
which is a pre-inverse to the projection map $\pi(v^\epsilon)= v$.  
\end{proof} 

\begin{proposition}\label{propcon}  
$\sss(L)$ is connected if and only if $L$ is connected and not equal to 
a single point. 
\end{proposition}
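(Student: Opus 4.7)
The plan is to work directly from the description $\sss(L) = \bigcup_{\sigma \in L} \sss(\sigma) \subseteq \sss(\Delta)$. Unpacking the join structure, $\sss(L)$ has vertex set $\{v^+, v^- : v \in L^0\}$, and two such vertices $v^\epsilon, w^\delta$ span an edge of $\sss(L)$ exactly when $v \neq w$ and $\{v, w\}$ is a $1$-simplex of $L$. The statement therefore reduces to a question about the $1$-skeleton of $\sss(L)$.

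For the forward direction, I would first invoke Proposition~\ref{propretr}: since $L$ is a retract of $\sss(L)$, connectedness of $\sss(L)$ forces $L$ to be connected. Separately, if $L$ consists of a single vertex $v$, then $\sss(L) = \sss(v) = \{v^+, v^-\}$ is a $0$-sphere and hence disconnected, contradicting the hypothesis.

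For the reverse direction, assume $L$ is connected and $|L^0| \geq 2$. Given two vertices $v^\epsilon$ and $w^\delta$ of $\sss(L)$, first suppose $v \neq w$. Pick an edge path $v = v_0, v_1, \ldots, v_k = w$ in $L$ and assign arbitrary signs $\eta_i$ to each $v_i$, with $\eta_0 = \epsilon$ and $\eta_k = \delta$; then $v_0^{\eta_0}, \ldots, v_k^{\eta_k}$ is a genuine edge path in $\sss(L)$, because consecutive terms have distinct underlying vertices spanning an edge of $L$. If instead $v = w$, choose any neighbor $u$ of $v$ in $L^1$ (which exists because $L$ is connected and has more than one vertex) and take the three-step path $v^+ - u^+ - v^-$.

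The argument is essentially routine once the adjacency relation in $\sss(L)$ is made explicit; no serious obstacle arises. The only place where the assumption $|L^0| \geq 2$ is actually used is in joining the antipodal pair $v^+, v^-$ through an external neighbor, which is precisely the obstruction in the degenerate case where $L$ is a single point.
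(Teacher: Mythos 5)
Your proof is correct and follows essentially the same route as the paper: the forward direction via the retraction of Proposition~\ref{propretr} plus the observation that a single point yields a $0$-sphere, and the reverse direction by lifting an edge path of positive length from $L$ to $\sss(L)$ with chosen signs. Your explicit handling of the case $v=w$ via a neighbour $u$ is the same device the paper uses implicitly by insisting the path have length $l>0$.
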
 

\begin{proof} 
Since $L$ is a retract of $\sss(L)$, $\sss(L)$ cannot be connected in the case 
when $L$ is not.  In the case when $L$ is a single point, $\sss(L)$ is two 
points.  Conversely, suppose that $v^\epsilon$ and $w^{\epsilon'}$ are 
vertices of $\sss(L)$ with $\epsilon,\epsilon'=\pm$.  Given any path 
$v=v_0\ldots,v_l=w$ in $L$ with $l>0$, the path $v^\epsilon,v_1^+,v_2^+,
\ldots,v_{l-1}^+,w^{\epsilon'}$ is a path from $v^\epsilon$ to 
$w^{\epsilon'}$.  
\end{proof} 

\begin{proposition}\label{propsimpconn} 
If $L$ has nlcp, then for any vertex $v$ of $L$, 
$\sss(\Star_L(v))$ is simply-connected. 
\end{proposition}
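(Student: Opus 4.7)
The plan is to identify $\sss(\Star_L(v))$ with the unreduced suspension of $\sss(\link_L(v))$, and then to verify via Proposition~\ref{propcon} and the nlcp hypothesis that $\sss(\link_L(v))$ is non-empty and path-connected. Since the unreduced suspension of any such space is simply-connected (by van Kampen applied to the two open cone halves), this yields the claim.

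For the first step I would establish the combinatorial identity
\[\sss(\Star_L(v)) \;=\; \{v^+,v^-\} * \sss(\link_L(v)),\]
where the right-hand side denotes the simplicial join of the discrete two-point complex $\{v^+,v^-\}$ with $\sss(\link_L(v))$. Since $\Star_L(v) = \{v\} * \link_L(v)$ as a simplicial complex, every simplex of $\Star_L(v)$ is either a simplex $\tau$ of $\link_L(v)$ or has the form $\{v\}\cup\tau$ with $\tau$ a (possibly empty) simplex of $\link_L(v)$. Applying $\sss(-)$ yields respectively a simplex of $\sss(\link_L(v))$ or a join $\{v^+,v^-\} * \sss(\tau)$, and together these enumerate exactly the simplex set of the proposed join. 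Passing to geometric realizations then gives $|\sss(\Star_L(v))| \cong S^0 * |\sss(\link_L(v))| = \Sigma|\sss(\link_L(v))|$.

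For the second step, by Proposition~\ref{propcon} it suffices to show that $\link_L(v)$ is connected and not a single point. Connectedness is one of the two defining conditions of nlcp. If $\link_L(v)$ consisted of a single vertex $\{w\}$, then the edge $\{v,w\}$ would be contained in no 2-simplex of $L$, contradicting the other defining condition of nlcp; and the standing exclusion of isolated vertices in the definition of nlcp ensures $\link_L(v)$ is non-empty.

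The main obstacle is the combinatorial identification in the first step, which requires being careful about how the empty simplex of $\link_L(v)$ contributes the isolated pair $\{v^+,v^-\}$ to the join; the rest of the argument is a direct application of results already available in the paper.
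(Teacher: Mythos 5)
Your proof is correct and follows essentially the same route as the paper: identify $\sss(\Star_L(v))$ with the join $\{v^+,v^-\}*\sss(\Lk_L(v))$, use the nlcp hypothesis together with Proposition~\ref{propcon} to see that $\sss(\Lk_L(v))$ is connected and non-empty, and conclude that the join (equivalently, the unreduced suspension) is simply-connected. You simply spell out in more detail the combinatorial identification of the join and the verification that $\Lk_L(v)$ is connected and not a single point, both of which the paper asserts more briefly.
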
 

\begin{proof} 
The fact that $L$ has nlcp implies that $\Lk_L(v)$ is connected and
not a single point.  By Proposition~\ref{propcon}, it follows that
$\sss(\Lk_L(v))$ is connected.  Since $\sss(\Star_L(v))$ can be identified
with the join $\{v^+,v^-\}*\sss(\Lk_L(v))$, the claim follows, since the
join of a non-empty space and a connected space is simply-connected.
\end{proof} 

\begin{proposition} \label{propfundgp} 
The natural map $\pi_1(\sss(L))\rightarrow \pi_1(L)$ is always surjective. 
If $L$ has nlcp, then this map is an isomorphism. 
\end{proposition}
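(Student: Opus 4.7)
The natural map $\pi_1(\sss(L))\to\pi_1(L)$ is induced by the retraction $\pi:\sss(L)\to L$, $v^\epsilon\mapsto v$, from Proposition~\ref{propretr}; that same proposition provides a section $s:L\to\sss(L)$, $v\mapsto v^+$, with $\pi\circ s=\id_L$. Consequently $\pi_*\circ s_*=\id_{\pi_1(L)}$, which gives (split) surjectivity of $\pi_*$ for every $L$ and settles the first sentence. Under the nlcp hypothesis, my strategy is to upgrade this to an isomorphism by showing that $s_*$ is also surjective, or equivalently that every edge loop in $\sss(L)$ is homotopic, inside $\sss(L)$, to a loop lying in the subcomplex $L^+:=s(L)\cong L$.

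To execute this, I would take an edge loop $\gamma$ in $\sss(L)$ based at $v_0^+$ and write it as a cyclic sequence $v_0^{\epsilon_0},v_1^{\epsilon_1},\dots,v_n^{\epsilon_n}$ of vertices with each consecutive pair spanning an edge of $\sss(L)$ (so $v_i\neq v_{i+1}$ and $\{v_i,v_{i+1}\}$ is an edge of $L$) and $\epsilon_0=\epsilon_n=+$. I would then show that at any index $i$ with $\epsilon_i=-$, the local replacement of $v_i^-$ by $v_i^+$ inside $\gamma$ is a homotopy in $\sss(L)$; iterating over all such $i$ converts $\gamma$ into a loop in $L^+$.

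The nlcp hypothesis enters only to justify this local move. Concatenating the two sub-paths $v_{i-1}^{\epsilon_{i-1}}\to v_i^-\to v_{i+1}^{\epsilon_{i+1}}$ and (the reverse of) $v_{i-1}^{\epsilon_{i-1}}\to v_i^+\to v_{i+1}^{\epsilon_{i+1}}$ produces a 4-cycle in $\sss(L)$ whose four vertices and four edges all lie in $\sss(\Star_L(v_i))$, since $v_{i-1},v_{i+1}\in\Lk_L(v_i)$. By Proposition~\ref{propsimpconn}, which is precisely where nlcp is used (via connectedness and non-triviality of $\Lk_L(v_i)$), the complex $\sss(\Star_L(v_i))$ is simply-connected, so this 4-cycle bounds in $\sss(\Star_L(v_i))\subseteq\sss(L)$, certifying the local move as a genuine $\sss(L)$-homotopy. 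The hard part is really just this one local filling step; once it is secured, the iteration is harmless because each move alters only a single vertex symbol while leaving its two neighbours in $\gamma$ intact, so successive moves cannot interfere and after finitely many steps $\gamma$ has been homotoped into $L^+$.
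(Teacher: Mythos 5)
Your proposal is correct and takes essentially the same route as the paper: split surjectivity via the retraction/section of Proposition~\ref{propretr}, and, under nlcp, homotoping each negative vertex $v_i^-$ of an edge loop to $v_i^+$ using the simple connectivity of $\sss(\Star_L(v_i))$ from Proposition~\ref{propsimpconn}. The only cosmetic difference is that the paper organizes the reduction as an induction on the number of negative vertices rather than performing all the local moves at once.
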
 

\begin{proof} 
The first statement is immediate from Proposition~\ref{propretr}.  For
the second statement, it suffices to show that every edge loop in
$\sss(L)$ based at a positive vertex $v_0^+$ is based homotopic to an
edge loop contained in the positive copy of $L$.  Let
$v_0^+=v_0^{\epsilon_0},v_1^{\epsilon_1},\ldots,v_{l-1}^{\epsilon_{l-1}},
v_l^{\epsilon_l}=v_0^+$ be such an edge loop in $\sss(L)$.  If
$\epsilon_i=+$ for each $i$ with $0<i<l$ then this loop already lies
in the positive copy of $L$.  Otherwise, pick some $i$ so that
$\epsilon_i=-$.  The subpath $v_{i-1}^{\epsilon_{i-1}},
v_i^{\epsilon_i},v_{i+1}^{\epsilon_{i+1}}$ is contained in
$\sss(\Star_L(v_i))$, which is simply-connected by
Proposition~\ref{propsimpconn}.  Hence this path is homotopic relative
to its end points to the path $v_{i-1}^{\epsilon_{i-1}},
v_i^+,v_{i+1}^{\epsilon_{i+1}}$, and the original loop is based
homotopic to a based loop containing fewer negative vertices.
\end{proof} 

\section{Coverings of $L$ and $\sss(L)$} 

\begin{proposition} 
Let $L$ be a connected flag complex with universal covering 
$\pi:\widetilde{L}\rightarrow L$.  The following diagram is a
pullback square.  
\[\begin{array}{ccc}
\sss(\widetilde{L})&\mapright{\sss(\pi)}&\sss(L)\\
\mapdown{}&&\mapdown{}\\
\widetilde{L}&\mapright{\pi}&L
\end{array}\] 
\end{proposition}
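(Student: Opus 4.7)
The plan is to identify $\sss(\widetilde L)$ directly with the pullback $P := \widetilde L \times_L \sss(L)$ via an explicit simplicial isomorphism that commutes with the two projections; the pullback property is then immediate.

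At the level of vertices, an element of $P^0$ is a pair $(\tilde v, w^\epsilon)$ with $\pi(\tilde v) = w$, so $w$ is determined as $\pi(\tilde v)$ and $P^0$ is canonically in bijection with $\widetilde L^0 \times \{+,-\} = \sss(\widetilde L)^0$ via $\tilde v^\epsilon \leftrightarrow (\tilde v, \pi(\tilde v)^\epsilon)$. Under this bijection the vertical projection $\sss(\widetilde L) \to \widetilde L$ from Proposition~\ref{propretr} and the horizontal map $\sss(\pi)$ visibly agree with the canonical projections out of $P$.

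To extend this to a simplicial isomorphism I would verify that the simplex conditions match. The key input is that $\pi$, being a covering map arising from a simplicial map, restricts to a simplicial isomorphism on each simplex of $\widetilde L$. Combined with the fact (built into the construction of $\sss(L)$) that $\sss(K) \subseteq \sss(\Delta) = \bigast_{v} \sss(v)$ is a full subcomplex of a join, so that a set $\{v_0^{\epsilon_0},\ldots,v_k^{\epsilon_k}\}$ spans a simplex of $\sss(K)$ iff the $v_i$ are pairwise distinct and span a simplex of $K$, this yields the equivalence of: $\{\tilde v_0^{\epsilon_0},\ldots,\tilde v_k^{\epsilon_k}\}$ spans a simplex of $\sss(\widetilde L)$; $\{\tilde v_0,\ldots,\tilde v_k\}$ spans a simplex of $\widetilde L$; and $\{\pi(\tilde v_0)^{\epsilon_0},\ldots,\pi(\tilde v_k)^{\epsilon_k}\}$ spans a simplex of $\sss(L)$ together with the $\pi(\tilde v_i)$ being pairwise distinct. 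The last pair of conditions is exactly what defines a simplex of $P$.

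The point to watch — not really an obstacle, but the reason one cannot finish the proof in a single line — is that the vertical map $\sss(L) \to L$ is itself \emph{not} a covering map (the number of simplices in a fibre over $\sigma$ grows as $2^{\dim \sigma + 1}$), so one cannot dispatch the proposition by invoking ``pullbacks of coverings are coverings.'' What actually forces the two simplex conditions above to coincide is the covering property of $\pi$ on the other side of the square, namely that $\pi$ restricts to a simplicial isomorphism — a bijection, not just a surjection, of vertex sets — on each simplex of $\widetilde L$.
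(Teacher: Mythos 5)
Your proposal is correct and follows essentially the same route as the paper's (very terse) proof: identify the vertex set of $\sss(\widetilde L)$ with the pullback of the vertex sets, then check that a set of vertices spans a simplex of $\sss(\widetilde L)$ exactly when its images in $\widetilde L$ and $\sss(L)$ both span simplices. Your added observation --- that the verification hinges on $\pi$ being injective on each simplex of $\widetilde L$, not on any covering property of $\sss(L)\to L$ --- is a correct and worthwhile elaboration of what the paper leaves implicit.
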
 

\begin{proof} 
The vertex set of $\sss(\widetilde{L})$ is easily seen to be the 
pullback of the other three vertex sets.  Furthermore, a set 
of $n+1$ vertices of $\sss(\widetilde{L})$ spans an $n$-simplex if 
and only if its images in both $\widetilde{L}$ and $\sss(L)$ span
$n$-simplices.  
\end{proof} 

\begin{corollary}\label{covcor} 
For $L$ connected and not a single point, 
$\sss(\pi):\sss(\widetilde{L})\rightarrow \sss(L)$ is 
a covering of $\sss(L)$ with fundamental group equal to the 
kernel of $\pi:\pi_1(\sss(L))\rightarrow \pi_1(L)$.  
\end{corollary}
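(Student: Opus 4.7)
The plan is to invoke the preceding pullback square together with the standard fact that covering maps are preserved under pullback, and then to identify the image subgroup via the universal property. Since $L$ is connected and not a single point, Proposition~\ref{propcon} gives that $\sss(L)$ is connected; the same reasoning applied to $\widetilde{L}$, which is connected and (since it surjects onto $L$) not a single point, shows $\sss(\widetilde{L})$ is connected, so both spaces have well-defined fundamental groups and the corresponding-subgroup language of covering space theory applies.

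To verify that $\sss(\pi)$ is a covering, I would use the open simplicial stars $\sss(\Star_L(v))$ as an evenly covered neighbourhood basis at the vertices $v^{\pm}$ of $\sss(L)$. For each vertex $v$ of $L$, the preimage of $\sss(\Star_L(v))$ under $\sss(\pi)$ is the disjoint union, over $\tilde v\in\pi^{-1}(v)$, of the corresponding $\sss(\Star_{\widetilde{L}}(\tilde v))$, each summand mapping simplicially homeomorphically to $\sss(\Star_L(v))$. This drops out immediately from the pullback description of $\sss(\widetilde{L})$: a point in $\sss(\widetilde{L})$ sufficiently close to a chosen lifted vertex is determined uniquely by its image in $\sss(L)$ together with the choice of sheet over $L$.

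To identify the subgroup $H=\sss(\pi)_*\pi_1(\sss(\widetilde{L}))\leq\pi_1(\sss(L))$, choose compatible basepoints. A loop $\gamma$ in $\sss(L)$ lies in $H$ precisely when its unique lift starting at the chosen basepoint in $\sss(\widetilde{L})$ is closed. By the universal property of the pullback, lifting $\gamma$ to a path in $\sss(\widetilde{L})$ is the same as lifting the composite $p\circ\gamma$ to a path in $\widetilde{L}$, where $p\colon\sss(L)\to L$ is the vertex-collapsing projection; one of these lifts is closed iff the other is. Since $\widetilde{L}$ is simply-connected, its lift is closed iff $p\circ\gamma$ is nullhomotopic in $L$, i.e.\ iff $[\gamma]$ lies in the kernel of the natural map $\pi_1(\sss(L))\to\pi_1(L)$ of Proposition~\ref{propfundgp}. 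The main (mild) obstacle is just careful basepoint bookkeeping together with the observation that the two composites $\sss(\widetilde L)\to\sss(L)\to L$ and $\sss(\widetilde L)\to\widetilde L\to L$ agree, which is exactly the commutativity recorded by the pullback square, and which is what lets the lifting criterion in $\widetilde L$ be read off directly as a lifting criterion in $\sss(\widetilde L)$.
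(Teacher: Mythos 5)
Your proposal is correct and follows essentially the same route as the paper: the covering property is deduced from the pullback square, and the subgroup is identified with the kernel by using the pullback to translate lifts of a loop $\gamma$ in $\sss(L)$ into lifts of its projection to $L$ in the universal cover $\widetilde{L}$. The paper phrases this as two explicit inclusions (the factorization $\sss(\widetilde{L})\to\widetilde{L}\to L$ for one direction, and the construction of the loop $\gamma''=(\gamma,\gamma')$ for the other), whereas you package both into the path-lifting criterion, but the content is identical.
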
 

\begin{proof} 
$\sss(\pi)$ is a covering map, since it is the pullback of the
  covering map $\pi:\widetilde{L}\rightarrow L$, and since the map
  $\sss(\widetilde{L})\rightarrow L$ factors through $\widetilde{L}$,
  the fundamental group of $\sss(\widetilde{L})$ has trivial image in
  $\pi_1(L)$.  Conversely, if $\gamma$ is an edge loop in $\sss(L)$
  whose image in $L$ is null-homotopic, then the image of $\gamma$
  lifts to an edge loop $\gamma'$ in $\widetilde{L}$.  The pair
  $(\gamma,\gamma')$ defines an edge loop $\gamma''$ in
  $\sss(\widetilde{L})$ with $\sss(\pi)(\gamma'')=\gamma$.
\end{proof}

When the complex $\sss(L)$ arises as the vertex link in $\ttt_L$, 
its vertices are naturally partitioned into `opposite pairs' 
$v^+,v^-$.  This partition is not, in general, determined by 
the isomorphism type of the simplicial complex $K=\sss(L)$.  For 
example, if $L=\sss(\Delta)$ for $\Delta$ an $n$-simplex and 
$K=\sss(L)$, then $K$ is isomorphic to the join of $(n+1)$ sets 
of four vertices, and $K$ itself gives no clue as to how these
four element sets pair up.  However, we show that the opposite pairs
in $\sss(\widetilde{L})$ are determined completely by the covering 
map $\sss(\pi):\sss(\widetilde{L})\rightarrow \sss(L)$ and the opposite pairs 
in $\sss(L)$.

\begin{proposition} \label{oppprop} 
For $L$ connected and not a single point  
the vertices $v$, $w$ of $\sss(\widetilde{L})$ form an opposite 
pair if and only if the following conditions hold: 
\begin{itemize} 
\item there is an edge path in $\sss(\widetilde{L})$ of length two from 
$v$ to $w$;

\item $\sss(\pi)(v)$ and $\sss(\pi)(w)$ form an opposite pair in $\sss(L)$.  
\end{itemize} 
\end{proposition}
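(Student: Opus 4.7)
Write vertices of $\sss(\widetilde{L})$ in the form $\tilde v^\epsilon$ with $\tilde v\in \widetilde L^0$ and $\epsilon\in\{+,-\}$, and recall that by construction an edge of $\sss(\widetilde{L})$ joins $\tilde v^\epsilon$ to $\tilde w^{\epsilon'}$ exactly when $\{\tilde v,\tilde w\}$ is a 1-simplex of $\widetilde{L}$ (so in particular $\tilde v\neq \tilde w$), with no restriction on the signs. My plan is to verify each direction separately, with the covering map property doing essentially all the work.

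For the forward direction, suppose $v=\tilde v^+$ and $w=\tilde v^-$ form an opposite pair. Then $\sss(\pi)(v)=\pi(\tilde v)^+$ and $\sss(\pi)(w)=\pi(\tilde v)^-$ form an opposite pair in $\sss(L)$, which is the second bullet. For the first bullet, I will use that $L$ is connected and not a single point, hence has at least one 1-simplex; by path-connectivity and lifting, $\tilde v$ has at least one neighbor $\tilde u$ in $\widetilde L$. Then $\tilde u^+$ is joined by edges to both $\tilde v^+$ and $\tilde v^-$, giving the required length-two edge path.

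For the reverse direction, write $v=\tilde v^\epsilon$ and $w=\tilde w^{\epsilon'}$. The second bullet says $\pi(\tilde v)^\epsilon$ and $\pi(\tilde w)^{\epsilon'}$ form an opposite pair in $\sss(L)$, which by definition forces $\pi(\tilde v)=\pi(\tilde w)$ and $\epsilon\neq\epsilon'$; so I only need to show $\tilde v=\tilde w$. The first bullet provides a vertex $u=\tilde u^\delta$ of $\sss(\widetilde{L})$ that is joined by an edge to $v$ and by an edge to $w$; by the description of edges of $\sss(\widetilde{L})$ recalled above, this means $\tilde u$ is a vertex of $\widetilde L$ which is a common neighbor of $\tilde v$ and $\tilde w$ (and in particular distinct from both).

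The key step is now to conclude $\tilde v=\tilde w$ from the fact that they are neighbors of a common vertex $\tilde u$ and have the same image under the covering $\pi$. This is exactly the statement that $\pi$ restricts to an injection on the vertex set of $\Star_{\widetilde L}(\tilde u)$: since $\pi$ is a simplicial covering map, the closed star of $\tilde u$ in $\widetilde L$ maps isomorphically onto the closed star of $\pi(\tilde u)$ in $L$, so any two neighbors of $\tilde u$ with the same image in $L$ must coincide. I expect this to be the main (and only) real obstacle, and it is essentially just unwinding the definition of a covering of simplicial complexes. Combined with $\epsilon\neq\epsilon'$, this gives $v=\tilde v^\epsilon$ and $w=\tilde v^{-\epsilon}$, so $\{v,w\}$ is an opposite pair, completing the proof.
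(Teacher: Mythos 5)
Your proof is correct, and the ``only if'' direction matches the paper's, but your ``if'' direction takes a genuinely different route. The paper concatenates the given length-two path from $v$ to $w'$ with the length-two path joining $v$ to its true opposite partner $w$, obtaining a length-four path $w,u,v,u',w'$ in $\sss(\widetilde{L})$ whose image in $\sss(L)$ is a loop that becomes null-homotopic in $L$; Corollary~\ref{covcor}, which identifies $\pi_1(\sss(\widetilde{L}))$ with the kernel of $\pi_1(\sss(L))\rightarrow\pi_1(L)$, then forces the length-four path to close up, so $w=w'$. You instead push everything down to the underlying covering $\pi\colon\widetilde{L}\rightarrow L$: the length-two path exhibits a common neighbour $\tilde u$ of $\tilde v$ and $\tilde w$ in $\widetilde{L}$, the opposite-pair condition downstairs forces $\pi(\tilde v)=\pi(\tilde w)$ with opposite signs, and injectivity of a simplicial covering map on the vertex set of the closed star of $\tilde u$ yields $\tilde v=\tilde w$. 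Your key step is a standard and correct fact (two distinct edges at $\tilde u$ mapping to the same edge of $L$ would violate injectivity of $\pi$ on an evenly covered neighbourhood of $\tilde u$), so there is no gap; what you gain is a purely local argument that never invokes Corollary~\ref{covcor} or loop lifting, while the paper's version stays within the covering-space formalism for $\sss(\widetilde{L})\rightarrow\sss(L)$ that it has just set up and reuses elsewhere in the section.
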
 

\begin{proof} 
If $v$ and $w$ form an opposite pair in $\sss(\widetilde{L})$ then so do
their images in $\sss(L)$.  Also for any adjacent vertices $x$, $y$ of
$\widetilde{L}$, $x^+,y^+,x^-$ is an edge path of length two in
$\sss(\widetilde{L})$ joining the opposite pair $x^+,x^-$.  

Now suppose that there are edge paths of length two in
$\sss(\widetilde{L})$ joining $v$ to both $w$ and $w'$, so that
$\sss(\pi)(w)=\sss(\pi)(w')$ forms an opposite pair with $\sss(\pi)(v)$.  By
concatenating these we obtain a length four path $w,u,v,u',w'$ from
$w$ to $w'$, which maps to a loop in $\sss(L)$.  Since $w$, $v$ and $w'$
all map to the same vertex of $L$, the image of this loop in $L$ is
null-homotopic.  By Corollary~\ref{covcor}, the original path in
$\sss(\widetilde{L})$ must be a closed loop and hence $w=w'$.
\end{proof}

\section{Local properties of CAT(0)-cube complexes} 

Note that in a cubical complex, the link of a vertex is naturally 
isomorphic to the sphere of radius $1/4$ around that vertex.  In 
a CAT(0) cubical complex, we can say more.  

\begin{theorem}\label{thmdefretr}
For any CAT(0) cubical complex $X$ and any vertex $v$ of $X$, 
$\link_X(v)$ is a strong deformation retract of $X-\{v\}$. 
\end{theorem}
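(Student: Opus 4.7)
The plan is to identify $\link_X(v)$ with the metric $1/4$-sphere $S := \{x \in X : d(v,x) = 1/4\}$ (as the observation preceding the theorem allows), and then build a strong deformation retraction of $X \setminus \{v\}$ onto $S$ by moving each point radially, toward or away from $v$, until it has distance exactly $1/4$ from $v$. Concretely, write $\rho(x) := d(v,x)$, and for $x \neq v$ let $\gamma_x \colon [0,L] \to X$ denote the unit-speed geodesic issuing from $v$ and passing through $x$; I will need $L \geq 1/4$. The retraction is then
\[
H(x,t) := \gamma_x\bigl((1-t)\rho(x) + t/4\bigr).
\]
The required properties of a strong deformation retraction fall out of the formula: $H(\cdot,0) = \mathrm{id}$, $H(\cdot,1)$ maps into $S$, $H(x,t) = x$ for every $x \in S$ and every $t$, and $d(v, H(x,t)) = (1-t)\rho(x) + t/4 \geq \min(\rho(x), 1/4) > 0$, so $H$ avoids the vertex $v$.

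The main technical step is the construction of $\gamma_x$, i.e., extending each geodesic out of $v$ to length at least $1/4$. For $\rho(x) \geq 1/4$ this is automatic from CAT(0) uniqueness of geodesics: restrict the $v$-to-$x$ geodesic. For $0 < \rho(x) < 1/4$ I would use the local cubical structure: any geodesic issuing from $v$ of length strictly less than $1$ stays inside the closed star of $v$, because after identifying a cube containing $v$ with $[0,1]^n$ with $v$ at the origin, every face not containing $v$ sits at orthogonal distance at least $1$ from $v$. Consequently $x$ lies in some closed cube $\sigma$ containing $v$, and within the Euclidean cube $\sigma$ the geodesic $\gamma_x$ is simply the straight ray $s \mapsto v + s(x-v)/\rho(x)$, extending uniquely to $s \in [0,1]$. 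Uniqueness of the CAT(0) geodesic through $v$ and $x$ ensures that this extension is independent of the choice of $\sigma$ when $x$ sits on a face shared by several cubes.

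The remaining step is to verify continuity of $H$. On the region $\{\rho(x) \geq 1/4\}$ continuity follows from the continuous dependence of CAT(0) geodesics on their endpoints, applied to the geodesic from $v$ to $x$ evaluated at the parameter $(1-t)\rho(x) + t/4$, which lies in $[0,\rho(x)]$. On $\{\rho(x) \leq 1/4\}$, the closed star of $v$ carries the natural (truncated) cone structure on $\link_X(v)$, and the radial stretching defined above is affine within each cube and glues across shared faces because it does so in the common Euclidean identifications. The two recipes agree on the overlap $\rho(x) = 1/4$, where both give $\gamma_x(1/4) = x$, so $H$ is continuous on all of $X \setminus \{v\}$. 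The main obstacle I anticipate is bookkeeping this gluing near $v$: one must check that $x \mapsto \gamma_x(s)$ remains continuous at points $x$ lying on a codimension-one face shared by two or more cubes of the star. This is forced by the cube complex identifications together with uniqueness of the CAT(0) geodesic, but it is the place where a careful proof will spend most of its time.
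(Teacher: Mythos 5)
Your proposal is correct and follows essentially the same route as the paper: identify $\link_X(v)$ with the sphere of radius $1/4$ and retract radially along geodesics through $v$, extending geodesics outward in the case $d(v,x)<1/4$ (where the paper simply cites the conical-neighbourhood result \cite[I.7.16]{brihae} in place of your explicit cube-by-cube argument). The only cosmetic difference is that the paper parametrises the homotopy at constant speed along the arc $[x,r(x)]$ rather than by your affine reparametrisation of the radial coordinate, which yields the same map.
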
 

\begin{proof}
For any $x\in X-\{v\}$, there is a unique $y\in \link_X(v)$ such that
the geodesic arcs $[v,y]$ and $[v,x]$ are nested: if $d(x,v)=1/4$
then $y=x$; if $d(x,v)>1/4$ then $y$ is the unique point of 
$\link_X(v)\cap [x,v]$, and if $d(x,v)<1/4$ this follows 
from~\cite[I.7.16]{brihae}.  Define $r:X-\{v\}\rightarrow \link_X(v)$ 
by $r(x)=y$, for $y$ defined as above.  If $i$ denotes the inclusion of 
the link as the sphere of radius $1/4$, 
$i:\link_X(v)\rightarrow X-\{v\}$, then $r\circ i$ is the identity map 
$\id_{\link_X(v)}$, and one can define a homotopy from $\id_X$ to $i\circ r$ 
by moving from $x$ to $r(x)$ at constant speed along the geodesic 
arc $[x,r(x)]$.  
\end{proof} 

\begin{corollary} Let $Z$ be a locally CAT(0) cube complex with 
all vertex links connected, and let $W$ be a set of vertices of 
$Z$.  Then $Z-W$ is connected and for any $w\in W$, the inclusion
map $i:\link_Z(w)\rightarrow Z-W$ is $\pi_1$-injective.  
\end{corollary}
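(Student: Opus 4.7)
The plan is to pass to the universal cover $\widetilde Z$ of $Z$, which is globally CAT(0) since $Z$ is locally CAT(0), apply Theorem~\ref{thmdefretr} there, and transfer the conclusions back down via the covering map $\pi:\widetilde Z\to Z$. Let $\widetilde W=\pi^{-1}(W)$. Because $\pi$ restricts to a homeomorphism on a small neighborhood of each vertex, the link of each lift $\tilde w$ of a vertex $w\in Z$ coincides with $\link_Z(w)$, so every vertex link in $\widetilde Z$ is connected.

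For connectedness, I would take $x,y\in\widetilde Z-\widetilde W$ and join them by the CAT(0) geodesic $\gamma$ in $\widetilde Z$. Since $\gamma$ is compact and the vertex set of $\widetilde Z$ is discrete, $\gamma$ meets $\widetilde W$ in at most finitely many vertices. Near each such $\tilde w$, the radial retraction of Theorem~\ref{thmdefretr} preserves the open ball $B(\tilde w,1/4)$ and exhibits $B(\tilde w,1/4)-\{\tilde w\}$ as deformation retracting onto $\link_{\widetilde Z}(\tilde w)$, which is connected. So I can reroute $\gamma$ across a short sub-arc within $B(\tilde w,1/4)-\{\tilde w\}$. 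Because distinct vertices of a cube complex are at distance at least $1$, the open $1/4$-balls around different vertices of $\widetilde W$ are pairwise disjoint and contain no other vertices, so the reroutes at different $\tilde w$ are independent and the final path lies in $\widetilde Z-\widetilde W$. Projecting to $Z$ then gives a path in $Z-W$.

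For the $\pi_1$-injectivity of $\link_Z(w)\hookrightarrow Z-W$, fix a lift $\tilde w$ of $w$ and factor the upstairs inclusion as
\[\link_{\widetilde Z}(\tilde w)\;\hookrightarrow\;\widetilde Z-\widetilde W\;\hookrightarrow\;\widetilde Z-\{\tilde w\},\]
where the first inclusion makes sense because $\link_{\widetilde Z}(\tilde w)$ contains no vertex of $\widetilde Z$. The composite is a homotopy equivalence by Theorem~\ref{thmdefretr}, hence $\pi_1$-injective, so the first inclusion is $\pi_1$-injective. To descend, I would consider a loop $\alpha:S^1\to\link_Z(w)$ bounding some $H:D^2\to Z-W$, lift $H$ to $\widetilde H:D^2\to\widetilde Z-\widetilde W$ (possible because $D^2$ is simply-connected), observe that $\widetilde H|_{S^1}$ stays in $\link_{\widetilde Z}(\tilde w)$ (since $\pi$ restricts to a homeomorphism $\link_{\widetilde Z}(\tilde w)\to\link_Z(w)$ and the lifted boundary agrees with one such lift of $\alpha$), apply the injectivity upstairs, and project the resulting null-homotopy back down.

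The main obstacle is the connectedness step: Theorem~\ref{thmdefretr} only handles removal of a single vertex, whereas $\widetilde W$ may be infinite. The argument above works because the rerouting near each vertex is confined to the open $1/4$-ball around it, and these balls are pairwise disjoint, so one only has to perform the surgery at the finitely many vertices that $\gamma$ actually meets, and the local reroutes never interfere with one another.
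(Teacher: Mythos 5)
Your proposal is correct and follows essentially the same route as the paper: pass to the universal cover, where Theorem~\ref{thmdefretr} applies, note that the covering restricts to a homeomorphism on vertex links and to a $\pi_1$-injective covering $\widetilde Z-\widetilde W\to Z-W$, and deduce injectivity from the split injection $\link(\tilde w)\to \widetilde Z-\widetilde W\to \widetilde Z-\{\tilde w\}$. The only cosmetic difference is that the paper reroutes paths around vertices directly in $Z$ using connectivity of links, whereas you perform the same local surgery upstairs and project down.
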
 

\begin{proof} $Z$ is path-connected by definition, and connectivity
of vertex links implies that any PL path between points of $Z-W$ 
may be homotoped to avoid $W$.  Now 
let $p:X\rightarrow Z$ be the universal covering map, 
so that $X$ is a CAT(0) cubical complex, and let $V=p^{-1}(W)$, a 
set of vertices of $X$, and fix some $v$ with $p(v)=w$.  Since 
$i:\link_Z(w)\rightarrow Z$ is homotopic to the constant map sending 
every point of $\link_Z(w)$ to $w$, it follows that $p:\link_X(v)
\rightarrow \link_Z(w)$ 
is a homeomorphism.  Now $p:X-V\rightarrow Z-W$ is a covering
of connected spaces, and so the induced map 
$p_*:\pi_1(X-V)\rightarrow \pi_1(Z-W)$ is injective.  Hence 
it suffices to show that the map $i':\link_X(v)\rightarrow X-V$ is injective on
fundamental groups.  This follows from Theorem~\ref{thmdefretr}, 
since the composite $\link_X(v)\rightarrow X-V \rightarrow X-\{v\}$ 
is split injective on fundamental groups.  
\end{proof} 

\begin{theorem} \label{thmcatocov} 
Let $X$ be a locally CAT(0) cube complex with universal covering 
$p:\widetilde{X}\rightarrow X$, suppose that all vertex links in 
$X$ are connected, and let $V$ be a set of vertices of $X$.  
There is a CAT(0) cubical complex $\overline{X}$ and a regular
branched covering map $c:\overline{X}\rightarrow X$, with 
branching locus contained in $V$ and such that 
\[\Lk_{\overline{X}}(w)= \begin{cases} 
\Lk_X(c(w)) \mbox{ if }c(w)\notin V \\
\widetilde{\Lk_X(c(w))} \mbox{ if }c(w)\in V.\end{cases} \] 
Moreover, the covering map $c$ factors through $\widetilde{X}$, 
in the sense that $c=p\circ b$ for some regular branched covering
map $b:\overline{X}\rightarrow \widetilde{X}$.  
\end{theorem}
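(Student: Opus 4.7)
The plan is to reduce to the case of a simply-connected base and then build $\overline{X}$ by puncturing the branch locus, taking the universal cover of what remains, and filling the punctures back in with one new vertex for each component of the preimage of each link. Let $p:\widetilde X\to X$ be the universal cover and set $\widetilde V=p^{-1}(V)$. It suffices to construct a regular branched covering $b:\overline X\to\widetilde X$ with branching locus contained in $\widetilde V$ and satisfying the stated link conditions, for then $c:=p\circ b$ is the desired regular branched cover with the required factorization built in.

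Let $Y=\widetilde X\setminus\widetilde V$. The previous corollary, applied to $Z=\widetilde X$ and $W=\widetilde V$ (the hypotheses of the corollary are inherited from those on $X$), tells us that $Y$ is connected and that each inclusion $\Lk_{\widetilde X}(v)\hookrightarrow Y$ is $\pi_1$-injective. Let $q:Y'\to Y$ be the universal cover. For each $v\in\widetilde V$, every component $C$ of $q^{-1}(\Lk_{\widetilde X}(v))$ is simply-connected (as a subspace of the simply-connected $Y'$) and covers $\Lk_{\widetilde X}(v)$, so $q|_C$ is a copy of $\widetilde{\Lk_X(v)}$. The deck group $\pi_1(Y)$ acts freely and transitively on the set of these components, since it acts transitively on the cosets of $i_*\pi_1(\Lk_{\widetilde X}(v))$.

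Now assemble $\overline X$ from $Y'$. For each $v\in\widetilde V$ and each component $C$ of $q^{-1}(\Lk_{\widetilde X}(v))$, adjoin a new vertex $v_C$ and extend $q$ to $b:\overline X\to\widetilde X$ by $b(v_C)=v$. Equip $\overline X$ with the cubical structure in which an $n$-cube with corner at $v_C$ consists of $v_C$ together with the unique lift to $Y'$ of the corresponding punctured $n$-cube of $\widetilde X$ whose restricted link simplex lies in $C$. By construction, $\Lk_{\overline X}(v_C)=C\cong\widetilde{\Lk_X(v)}$, while for a vertex $w\in Y'$ the link in $\overline X$ coincides with that in $Y'$, which maps isomorphically to $\Lk_{\widetilde X}(q(w))=\Lk_X(c(w))$.

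It remains to check that $\overline X$ is CAT(0) and that $c$ is regular. Every vertex link in $\overline X$ is a flag complex, since flagness is a local property, inherited from $\widetilde X$ at unbranched vertices and passing to the universal cover at branched ones. For simple-connectivity, decompose $\overline X=Y'\cup\bigcup_{v,C}\Star(v_C)$; each star is the cone on the simply-connected complex $C$ and meets $Y'$ in $C$, so a direct-limit van Kampen argument gives $\pi_1(\overline X)=\pi_1(Y')=1$. Gromov's link condition then yields CAT(0). For regularity, the deck action of $\pi_1(Y)$ on $Y'$ extends cellularly to $\overline X$ by permuting the $v_C$, and is transitive on every fibre $b^{-1}(v)$; combined with the $\pi_1(X)$-action on $\widetilde X$, this exhibits $c$ as a regular branched covering. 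The main obstacle lies in making the combinatorial assembly of the cube structure in the third paragraph unambiguous: one must check that any two $n$-cubes of $\widetilde X$ sharing a face and a corner at $v$ lift to $Y'$ so as to share the corresponding face and determine the same component $C$, and conversely that distinct lifts produce distinct components. Both points follow from the $\pi_1$-injectivity of $\Lk_{\widetilde X}(v)\hookrightarrow Y$ established in the preceding corollary, which forbids unintended identifications or separations among the lifts.
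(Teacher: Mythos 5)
Your construction is essentially the paper's: puncture the branch locus, pass to the universal cover of the complement, and adjoin one new vertex for each component of each link preimage (the paper describes these components as equivalence classes of edge-ends and takes the universal cover of $X-V$ directly rather than of $\widetilde{X}-p^{-1}(V)$, but these two covers coincide). Two of your local justifications are wrong even though the conclusions are right and follow from the $\pi_1$-injectivity you cite: a component $C$ of $q^{-1}(\Lk_{\widetilde{X}}(v))$ is not simply-connected merely ``as a subspace of the simply-connected $Y'$'' (subspaces of simply-connected spaces need not be simply-connected) --- rather, $C$ is the cover of $\Lk_{\widetilde{X}}(v)$ corresponding to the kernel of $i_*$, which is trivial by the corollary; and the deck group acts transitively but \emph{not} freely on the set of such components, the stabilizer of $C$ being a conjugate of $i_*\pi_1(\Lk_{\widetilde{X}}(v))$ --- indeed these stabilizers are exactly the nontrivial vertex stabilizers $\pi_1(L)$ appearing in Corollary~\ref{corstabs}. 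Since only transitivity is used for regularity, the argument goes through once these two points are corrected.
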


\begin{proof} 
We shall define $\overline{X}$ by adding vertices to the universal cover 
$Y$ of $X-V$.  Let $c:Y\rightarrow X-V$ denote the covering map.  
Since the map $\pi_1(X-V)\rightarrow \pi_1(X)$ is 
surjective, it follows that $c:Y\rightarrow X-V$ factors through 
$\widetilde{X}-p^{-1}(V)$, so we define 
$b:Y\rightarrow \widetilde{X}-p^{-1}(V)$ so that $c=p\circ b$.  
The connected components of the 
inverse images of cubes of $\widetilde{X}$ split $Y$ as a 
disjoint union of open cubes.  This gives $Y$ the structure 
of a cubical complex with some missing vertices.  
Let $W'$ be the set of those ends of edges of $Y$ that do not 
have a vertex incident on them.  Generate an equivalence relation
on $W'$ by the relation $e\sim e'$ if there is a corner of a 
square of $Y$ that contains the edge-ends $e$ and $e'$.  Now 
define $W$ to be $W'/\sim$, the set of equivalence classes, 
and let $\overline{X}=Y\coprod W$.  For $n>0$, each $n$-cube 
of $\overline{X}$ is an $n$-cube of $Y$.  The vertex set of 
$\overline{X}$ consists of $W$ together with the vertices 
of $Y$.  A vertex $w\in W$ is incident on an edge-end $e$ 
if and only $w$ is the equivalence class defined by $e$.  
For $w\in W$, $b(w)$ is defined as follows: pick some 
edge-end $e$ incident on $w$, and define $b(w)$ 
to be the vertex of $\widetilde{X}$ incident on the edge-end $b(e)$, 
and define $c(w)=p(b(w))$.  The claimed structure for 
the vertex links in $\overline{X}$ is now easily verified.  
Since $Y$ is simply-connected and the links of vertices 
in $W$ are connected, it follows that $\overline{X}$ is 
simply-connected, and hence that $\overline{X}$ is CAT(0) 
as claimed.  
\end{proof} 

There is an alternative description of $\overline{X}$ as the metric 
completion of $Y$ in the path metric induced by $c:Y\rightarrow X-V$, 
but we prefer to emphasize the combinatorial nature of
$\overline{X}$.  

\section{The construction of $X^{(S)}_L$} 

\begin{theorem}\label{thmexess}
For any connected finite $L$ and any $S\subseteq \zz$, 
the locally CAT(0) cubical complex $X_L/BB_L$ admits a regular branched 
cover $c:X_L^{(S)}\rightarrow X_L/BB_L$ with the following 
properties: 
\begin{itemize} 
\item $c$ factors through $X_L$: there is a regular branched cover 
$b:X_L^{(S)}\rightarrow X_L$ whose composite with the natural map 
$X_L\rightarrow X_L/BB_L$ is equal to $c$; 

\item The function $f^{(S)}:= f\circ b=\tilde l\circ c$ is a Morse function on
  $X^{(S)}$; 

\item The links of vertices of $X^{(S)}$ are either $\sss(L)$ or 
$\sss(\widetilde{L})$, depending on their height: 
\[\Lk_{X^{(S)}}(v)=\begin{cases} 
\sss(L) \mbox{ if }f^{(S)}(v)\in S\\
\sss(\widetilde{L}) \mbox{ if }f^{(S)}(v)\notin S
\end{cases} \]

\item 
The ascending and descending links of vertices of $X^{(S)}$ are either $L$ 
or $\widetilde{L}$ depending on their height: 
\[\Lk\downw_{X^{(S)}}(v)=\Lk\upw_{X^{(S)}}(v)
=\begin{cases} 
L \mbox{ if }f^{(S)}(v)\in S\\
\widetilde{L} \mbox{ if }f^{(S)}(v)\notin S
\end{cases} \]
\end{itemize} 

The group $G_L(S)$ is by definition the group of deck transformations 
of the branched covering $c:X_L^{(S)}\rightarrow X_L/BB_L$.  
\end{theorem}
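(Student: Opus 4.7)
The plan is to construct $X_L^{(S)}$ as a regular branched covering $b: X_L^{(S)} \to X_L$, branched over the set $V_X = \{v \in X_L^0 : f(v) \in \zz \setminus S\}$, and then to verify that the $BB_L$-action on $X_L$ lifts so that $c = p \circ b: X_L^{(S)} \to X_L/BB_L$ (with $p: X_L \to X_L/BB_L$ the natural quotient) is a regular branched cover. The construction follows the template of Theorem~\ref{thmcatocov}, with one essential modification: I do not use the universal cover of $X_L - V_X$, but instead a specific regular cover of $X_L - V_X$ whose local monodromy around each $v \in V_X$ matches the covering $\sss(\widetilde L) \to \sss(L)$ of Corollary~\ref{covcor}, which need not be universal.

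To pin down the cover, note that $X_L$ is simply connected, so $\pi_1(X_L - V_X)$ is normally generated by the images of $\pi_1(\sss(L)) = \pi_1(\Lk(v))$ for $v \in V_X$. Let $H \lhd \pi_1(X_L - V_X)$ be the normal closure of the kernels $\ker(\pi_1(\sss(L)) \to \pi_1(L))$ over all $v \in V_X$. The cover $Y \to X_L - V_X$ corresponding to $H$ then restricts, in a punctured neighborhood of each $v \in V_X$, to a copy of the covering $\sss(\widetilde L) \to \sss(L)$. I fill in vertices combinatorially as in Theorem~\ref{thmcatocov}: two missing edge-ends are equivalent if they share a corner of a square, and each equivalence class is adjoined as a vertex. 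The resulting cubical complex $X_L^{(S)}$ has vertex link $\sss(L)$ above unbranched vertices and $\sss(\widetilde L)$ above branched ones. These are flag complexes by Proposition~\ref{propflag} applied to $L$ and to $\widetilde L$, so $X_L^{(S)}$ is locally CAT(0); simple-connectedness, and hence the CAT(0) property, follows as in Theorem~\ref{thmcatocov}, using that $Y$ is chosen to carry only the normal subgroup $H$ as its fundamental group and that the newly added vertex links are connected.

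The remaining verifications are formal. Because $H$ is $BB_L$-invariant (the set of kernels generating it is permuted by the action of $BB_L$ on $V_X$), the $BB_L$-action on $X_L - V_X$ lifts to $Y$ and extends to $X_L^{(S)}$, so $c$ is a regular branched cover of $X_L/BB_L$ whose deck group --- an extension of $BB_L$ by the deck group of $b$ --- is by definition $G_L(S)$. The Morse-function properties of $f^{(S)} = f \circ b = \tilde l \circ c$ are inherited from $f$ because $b$ maps each cube of $X_L^{(S)}$ isomorphically onto a cube of $X_L$. The ascending and descending links decompose into positive and negative copies of $L$ at unbranched vertices (the $\pm$ copies being full subcomplexes of $\sss(L)$) and into positive and negative copies of $\widetilde L$ at branched ones, using that the covering $\sss(\widetilde L) \to \sss(L)$ restricts to $\widetilde L \to L$ on each $\pm$-copy. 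The main obstacle, I expect, is the choice of $H$: verifying that it intersects each vertex-link subgroup $\pi_1(\sss(L))$ in exactly $\ker(\pi_1(\sss(L)) \to \pi_1(L))$ (and nothing larger), and that the $BB_L$-equivariant structure is preserved throughout the filling-in procedure, so that the local and global structure of the branched cover is exactly as stated; once that local model is in place, the remaining properties are routine consequences of the combinatorics of Theorem~\ref{thmcatocov}.
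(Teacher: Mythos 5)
Your construction is a genuinely different route from the paper's, but it has a real gap at exactly the point you flag as ``the main obstacle'': you never prove that your normal closure $H$ satisfies $i_{v*}^{-1}(H)=\ker(\pi_1(\sss(L))\rightarrow\pi_1(L))$ for every $v\in V_X$, and in the non-nlcp case this is where the entire difficulty of the theorem lives. The inclusion $\supseteq$ is immediate from the definition of $H$, but the reverse inclusion --- that taking the normal closure of all the local kernels simultaneously does not kill anything extra inside any one link subgroup --- is not formal. If $H$ did overshoot, the induced cover of $\Lk(v)$ would be a proper quotient of $\sss(\widetilde L)$, and every subsequent assertion (the link computation, local CAT(0)-ness via flagness of $\sss(\widetilde{L})$, the identification of ascending and descending links with $\widetilde L$) would fail. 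Note also that your appeal to Theorem~\ref{thmcatocov} for simple-connectedness does not transfer directly: there $Y$ is simply connected, whereas your $Y$ has $\pi_1(Y)=H$, so you need a separate van Kampen argument that the images of the link groups of the filled-in vertices normally generate $H$ \emph{inside} $H$ --- and that argument again reduces to the same unproved local statement.

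The paper closes this gap by a detour you do not take: embed $L$ as a full subcomplex of a complex $M=M(L)$ with nlcp (Lemma~\ref{lemnlpc}). When $M$ has nlcp, Proposition~\ref{propfundgp} gives $\pi_1(\sss(M))\cong\pi_1(M)$, so $\sss(\widetilde M)$ is the universal cover of $\sss(M)$ and the universal-cover construction of Theorem~\ref{thmcatocov} applies verbatim to produce $X_M^{(S)}$; one then defines $X_L^{(S)}$ as a component of the preimage of $X_L/A_L$ in $X_M^{(S)}$ and checks it is a convex, hence CAT(0), subcomplex on which a subgroup of $G_M(S)$ acts regularly. In your language, the point is that the composite $\pi_1(\sss(L))\rightarrow\pi_1(X_M-V_{X_M})$ factors through the $\pi_1$-injective subcomplex $\sss(M)$ with $\pi_1(\sss(M))\cong\pi_1(L)$, and this is precisely what certifies that the local monodromy kernel is exactly $\ker(\pi_1(\sss(L))\rightarrow\pi_1(L))$ and nothing larger. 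Without this (or an equivalent argument), your proof establishes the theorem only when $L$ itself has nlcp, in which case the relevant kernels are trivial, $H$ is trivial, $Y$ is the universal cover of $X_L-V_X$, the needed statement is just the $\pi_1$-injectivity of links from the corollary to Theorem~\ref{thmdefretr}, and your construction coincides with the paper's first case.
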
 

\begin{proof} 
In the case when $L$ has nlcp, Proposition~\ref{propfundgp} shows that 
$\sss(\widetilde{L})$ is the universal cover of $\sss(L)$; in this 
case $X_L^{(S)}$ and $b$ can be constructed by applying 
Theorem~\ref{thmcatocov} to the locally CAT(0) cube complex 
$X_L/BB_L$, with $V$ equal to the vertices of $X_L/BB_L$ 
whose height lies in $\zz-S$.  Since $X_L/BB_L$ has exactly one 
vertex of each height we note that $G_L(S)$ acts transitively 
on the vertices of $X_L^{(S)}$ of each height.  

In the general case, we realize $X_L^{(S)}$ as a subcomplex of 
$X_M^{(S)}$, where $M=M(L)$ is as described in Lemma~\ref{lemnlpc}.  
We have $X_L/A_L$ embedded as a subcomplex of $X_M/A_M$.  Define 
$Y$ to be one of the connected components of the inverse image 
of $X_L/A_L \subseteq X_M/A_M$ under the composite map 
$X_M^{(S)}\rightarrow X_M\rightarrow X_M/A_M$.  Since the 
group $G_M(S)$ acts transitively on the vertices of $X_M^{(S)}$ 
of a given height and $Y$ contains vertices of all heights, 
any other connected component will be isomorphic to $Y$.  
Since $L$ is a full subcomplex of $M$, $\sss(L)$ is a full subcomplex
of $\sss(M)$, and the inverse image of $\sss(L)\subseteq \sss(M)$ under
the universal covering map $\sss(\widetilde{M})\rightarrow \sss(M)$ 
is $\sss(\widetilde{L})$.  It follows that vertices of $Y$ of 
height in $S$ have link $\sss(L)$ and vertices of $Y$ of height
not in $S$ have link $\sss(\widetilde{L})$ as required.  

Since each vertex link in $Y$ is a full subcomplex of the vertex
link in $X_M^{(S)}$, it follows that $Y$ is a convex subspace of 
$X_M^{(S)}$, and so $Y$ is simply-connected and hence is itself 
CAT(0).  By construction, we have branched covering maps 
$b:Y\rightarrow X_L$ and $c:Y\rightarrow X_L/BB_L\subseteq X_M/BB_M$.  
It remains to show that the second of these coverings 
is regular.  For this, it suffices to show that whenever $g\in G_M(S)$ 
is such that $gY\cap Y\neq \emptyset$, then $gY=Y$.  This is because
the action of $g$ preserves the `type' of cubes.  Suppose that $y,z$ 
are vertices of $Y$ and that $y\in gY\cap Y$, and let $e_1,\ldots, 
e_l$ be an edge path in $Y$ from $y$ to $z$.  Equivalently, 
$e_1,\ldots,e_l$ is an edge path in $X_M^{(S)}$ such that 
each $e_i$ has image contained in the subspace $X_L/A_L\subseteq 
X_M/A_M$.  The edge path $g^{-1}e_1,\ldots,g^{-1}e_n$ starts at 
$y'\in Y$ and has image contained in the subspace $X_L/A_L$ 
of $X_M/A_M$, and hence its end point $z'$ must lie in $Y$.  
From this we see that the edge path $e_1,\ldots, e_l$ is contained 
in $Y\cap gY$, so that $z\in gY$ and hence that 
$Y=gY$.  This establishes that the group 
\[G_L(S)=\{g\in G_M(S): gY\cap Y\neq \emptyset\} = 
\{g\in G_M(S): gY = Y\}\]
acts regularly on the branched covering $c:Y\rightarrow X_L/BB_L$.  

It remains to show that this construction does not depend on the
choice of $M$.  Suppose that $L\subseteq M_1$ and $L\subseteq M_2$ are
two choices of embedding as in Lemma~\ref{lemnlpc}, yielding $Y_1$ and
$Y_2$ with groups $G_1$ and $G_2$ of deck transformations.  If we
define $M_3:= M_1\cup_LM_2$, then $L\subseteq M_3$ is also an
embedding with the properties of Lemma~\ref{lemnlpc}, and we may
construct $Y_3\subseteq X_{M_3}^{(S)}$ with an action of $G_3\leq
G_{M_3}(S)$.  The naturality properties lead to maps $Y_1\rightarrow
Y_3$ and $Y_2\rightarrow Y_3$ which are local isomorphisms and hence
(since all three are simply-connected) isomorphisms.  This in turn
yields group isomorphisms $G_1\rightarrow G_3$ and $G_2\rightarrow
G_3$ with respect to which the given isomorphisms $Y_i\rightarrow Y_3$
are equivariant.
\end{proof} 

\begin{corollary} \label{corstabs}
The cellular action of the group $G_L(S)$ on $X_L^{(S)}$ is free,
except that each vertex whose height is not in $S$ is stabilized 
by a subgroup isomorphic to $\pi_1(L)$. 
\end{corollary}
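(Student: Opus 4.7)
The plan is to deduce the corollary directly from Theorem \ref{thmexess}, which presents $c: X_L^{(S)} \to X_L/BB_L$ as a regular branched covering with deck group $G_L(S)$, together with the link data it records. From the construction in Theorem \ref{thmcatocov}, the branching locus of $c$ is contained in the set $V$ of vertices of $X_L/BB_L$ whose height lies in $\zz - S$. Restricting $c$ to the complement $X_L^{(S)} - c^{-1}(V) \to X_L/BB_L - V$ yields an ordinary regular covering, and the deck group of any ordinary regular covering acts freely; thus $G_L(S)$ acts freely on $X_L^{(S)} - c^{-1}(V)$. Since the interior of every positive-dimensional cell and every vertex of $S$-height lies in this open set, all such cells have trivial stabilizer in $G_L(S)$.

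It remains to identify $\mathrm{Stab}(v)$ for a vertex $v$ with $f^{(S)}(v) \notin S$. By Theorem \ref{thmexess}, near such a $v$ the map $c$ is the cone on the link covering $\sss(\widetilde{L}) \to \sss(L)$, whose deck group, by Corollary \ref{covcor} and Proposition \ref{propfundgp}, is exactly $\pi_1(L)$. The action of $\mathrm{Stab}(v)$ on a cubical neighborhood of $v$ is the coning of its induced action on $\Lk(v)$, which yields a homomorphism $\rho: \mathrm{Stab}(v) \to \mathrm{Deck}(\sss(\widetilde{L}) / \sss(L)) \cong \pi_1(L)$. Injectivity of $\rho$ is immediate: an element of $\ker \rho$ fixes a cubical neighborhood of $v$ pointwise, but by the first paragraph a nontrivial element of $G_L(S)$ has fixed set contained in the discrete subset $c^{-1}(V)$ and so cannot fix an open set.

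The main obstacle is surjectivity of $\rho$. In the nlcp case the construction of Theorem \ref{thmcatocov} realizes $v$ as an equivalence class of edge-ends in the universal cover $Y$ of $X_L/BB_L - V$, and any small loop in $X_L/BB_L - V$ encircling $c(v)$ lifts to a deck transformation fixing $v$; a punctured neighborhood of $c(v)$ deformation retracts onto $\sss(L)$, so the resulting subgroup of $\mathrm{Stab}(v)$ surjects onto $\pi_1(L)$ by Proposition \ref{propfundgp}. In the general case, where $X_L^{(S)}$ is realized as a distinguished component $Y \subseteq X_M^{(S)}$ with $G_L(S) = \{g \in G_M(S) : gY = Y\}$, I must check that every deck transformation of $\sss(\widetilde{L}) \to \sss(L)$ lifts to an element of $G_L(S)$, not merely of $G_M(S)$. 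This should follow from the observation in the proof of Theorem \ref{thmexess} that $\sss(\widetilde{L})$ appears as a component of the preimage of $\sss(L) \subseteq \sss(M)$ under $\sss(\widetilde{M}) \to \sss(M)$: a deck transformation of the smaller link cover extends to a deck transformation of the larger link cover that preserves this distinguished component, and the corresponding element of $G_M(S)$ must then preserve $Y$, so lies in $G_L(S)$.
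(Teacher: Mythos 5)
Your argument is correct and follows the route the paper intends: the paper's own proof is simply ``Immediate from the construction of $X_L^{(S)}$,'' and what you have done is unpack that construction --- freeness away from the branch vertices because $c$ restricts to a genuine regular covering there, and identification of the stabilizer of a branch vertex with the deck group $\pi_1(L)$ of $\sss(\widetilde{L})\rightarrow\sss(L)$ via Corollary~\ref{covcor} and Proposition~\ref{propfundgp}, including the necessary care in the non-nlcp case. One small streamlining for that last step: any $g\in G_M(S)$ fixing $v$ satisfies $gY\cap Y\ni v$, hence $gY=Y$ by the argument in the proof of Theorem~\ref{thmexess}, so $\mathrm{Stab}_{G_L(S)}(v)=\mathrm{Stab}_{G_M(S)}(v)$ and the general case reduces immediately to the nlcp case applied to $M$ (using $\pi_1(M)\cong\pi_1(L)$).
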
 

\begin{proof} 
Immediate from the construction of $X_L^{(S)}$.
\end{proof} 

Fix some connected $L$, and for $t\in \rr$, let $X_t=X_{L,t}$ 
denote the level set $f_L^{-1}(t)\subseteq X_L$.  

\begin{corollary}\label{corregcov}
If $t\notin \zz$, or $t\in S$, then 
then $c:X^{(S)}_t\rightarrow X_t/BB_L$ is a regular cover 
without branching, and induces an isomorphism 
$X^{(S)}_t/G_L(S)\cong X_t/BB_L$.  
\end{corollary}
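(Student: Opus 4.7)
The plan is to deduce the corollary directly from Theorem~\ref{thmexess} together with Corollary~\ref{corstabs}, using the identity $f^{(S)} = \tilde l \circ c$ to identify $X_t^{(S)}$ as the $c$-preimage of $X_t/BB_L$ and then ruling out branching on that preimage.

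First I would observe that the Morse function $f^{(S)}$ on $X_L^{(S)}$ and the induced height function $\tilde l$ on $X_L/BB_L$ satisfy $f^{(S)} = \tilde l \circ c$, by the second bullet of Theorem~\ref{thmexess}. Hence
\[X_t^{(S)} = (f^{(S)})^{-1}(t) = c^{-1}\bigl(\tilde l^{-1}(t)\bigr) = c^{-1}(X_t/BB_L),\]
so the restriction of $c$ to $X_t^{(S)}$ maps onto $X_t/BB_L$. Because $c$ is equivariant for the trivial $G_L(S)$-action on the base, the group $G_L(S)$ preserves the level set $X_t^{(S)}$.

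Next I would show that this restriction is unbranched. The branch locus of $c$ consists, by Theorem~\ref{thmexess}, of those vertices of $X_L^{(S)}$ whose height lies in $\zz\setminus S$; these are exactly the vertices with link $\sss(\widetilde L)$ rather than $\sss(L)$. If $t\notin\zz$, then $X_t^{(S)}$ contains no vertices at all, since every vertex of the cubical complex has integer height under the Morse function. If instead $t\in S\cap\zz$, then every vertex of $X_t^{(S)}$ has height $t\in S$, so none of them is a branch point. Either way, the restriction of $c$ to $X_t^{(S)}$ avoids the branch locus and is therefore an ordinary covering map onto $X_t/BB_L$.

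Finally, to upgrade to a regular covering and identify the quotient, I would invoke Corollary~\ref{corstabs}: the action of $G_L(S)$ on $X_L^{(S)}$ is free away from the branch locus, so in particular it is free on $X_t^{(S)}$. Since $c:X_L^{(S)}\to X_L/BB_L$ is, by definition, a regular branched covering with deck group $G_L(S)$, its fibres are exactly the $G_L(S)$-orbits; restricting to the complement of the branch locus, $c|_{X_t^{(S)}}$ is a regular covering with deck group $G_L(S)$, and the induced map $X_t^{(S)}/G_L(S)\to X_t/BB_L$ is a homeomorphism. There is no real obstacle here: the content of the corollary is packaged into the construction of $X_L^{(S)}$ and the description of its branch locus, and the only thing to check carefully is that the two stated hypotheses on $t$ are precisely the ones that exclude branching on the level set.
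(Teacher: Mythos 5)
Your proposal is correct and is essentially the paper's own argument, which is compressed into the single observation that no branch point of $c:X^{(S)}\rightarrow X/BB_L$ has height equal to $t$; you have simply unpacked why that observation suffices (vertices have integer heights, the branch locus sits at heights in $\zz- S$, and a regular branched cover restricted away from its branch locus is a regular cover whose fibres are the deck-group orbits). No gaps.
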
 

\begin{proof} 
No branch point of $c:X^{(S)}\rightarrow X/BB_L$ has height equal to
$t$.  
\end{proof} 

\section{Connectivity of level sets in $X^{(S)}_L$} 

As at the end of the previous section, we fix a choice of connected $L$, 
and for $t\in \rr$ and $S\subseteq \zz$ we write $X^{(S)}_t$ for the 
level set ${f^{(S)}}^{-1}(t)\subseteq X^{(S)}_L$.   Furthermore, we 
write $X^{(S)}_{[a,b]}$ for ${f^{(S)}}^{-1}([a,b])\subseteq X^{(S)}_L$.

\begin{proposition} 
\begin{enumerate}
\item Each $X^{(S)}_t$ is connected.  

\item If both $L$ and $\widetilde{L}$ are $R$-acyclic, then $X^{(S)}_t$ is 
$R$-acyclic.  

\item If $L$ and $\widetilde{L}$ are both $m$-$R$-acyclic then so is
  $X^{(S)}_t$.  
\end{enumerate}
\end{proposition}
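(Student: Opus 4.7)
The plan is to treat all three parts as consequences of a single Morse-theoretic comparison between $X^{(S)}_t$ and the whole complex $X^{(S)}_L$. Two ingredients, both from Theorem~\ref{thmexess}, drive everything: $X^{(S)}_L$ is CAT(0) (hence contractible), and every ascending and descending link at a vertex of $X^{(S)}_L$ is isomorphic to $L$ or to $\widetilde L$ according to whether the vertex's height lies in $S$. Through Theorem~\ref{thmmorse} this converts the hypotheses on $L$ and $\widetilde L$ into acyclicity information for level sets.

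For $t\notin\zz$ I would proceed by exhausting $X^{(S)}_L$ with compact slabs. First pick $b<t<c$ so that $[b,c]$ contains no vertex; then $X^{(S)}_{[b,c]}$ is homotopy equivalent to $X^{(S)}_t$, since inside each cube the slab is a product of the level-$t$ slice with an interval and these local products glue globally. For any non-integer $a<b$ and $d>c$, Theorem~\ref{thmmorse} presents $X^{(S)}_{[a,d]}$, up to homotopy, as $X^{(S)}_{[b,c]}$ with a cone $C\Lk^\downw(v)$ attached for each vertex $v$ with $f^{(S)}(v)\in(c,d]$ and a cone $C\Lk^\upw(v)$ for each $v$ with $f^{(S)}(v)\in[a,b)$. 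Each such link is $L$ or $\widetilde L$. If both are $m$-$R$-acyclic, then the long exact sequence of the pair $(Y\cup CK,Y)$ together with the identification $H_i(Y\cup CK,Y)\cong \overline H_{i-1}(K;R)$ shows that each individual cone attachment preserves $\overline H_i(-;R)$ in degrees $i\leq m$. Iterating over the finitely many new vertices, the inclusion $X^{(S)}_t\hookrightarrow X^{(S)}_{[a,d]}$ is an $\overline H_i(-;R)$-isomorphism for $i\leq m$. Exhausting $X^{(S)}_L=\bigcup_n X^{(S)}_{[-n-\tfrac12,\,n+\tfrac12]}$ and using that singular homology commutes with filtered colimits, I conclude $\overline H_i(X^{(S)}_t;R)\cong\overline H_i(X^{(S)}_L;R)=0$ for $i\leq m$. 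This proves parts (2) and (3) for $t\notin\zz$, and part (1) is the case $m=0$, since the standing hypothesis that $L$ is connected forces its universal cover $\widetilde L$ also to be connected, so both are $0$-$R$-acyclic for any nontrivial $R$.

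The main obstacle I anticipate is the case $t\in\zz$: Theorem~\ref{thmmorse} requires strict inequalities $a<b<c<d$ so one cannot simply set $b=c=t$, and the level set $X^{(S)}_t$ contains genuine vertices of $X^{(S)}_L$ rather than being a transverse slice through edges. The plan is to pick small $\delta,\epsilon>0$ with $t-\delta$ and $t+\epsilon$ non-integer so that the only vertices of $X^{(S)}_L$ in $[t-\delta,t+\epsilon]$ have height exactly $t$. I would then identify the thin slab $X^{(S)}_{[t-\delta,t+\epsilon]}$ with $X^{(S)}_t$ up to homotopy via a cube-wise collapse onto the height-$t$ slice, with the height-$t$ vertices serving as fixed cone points; and in parallel apply Theorem~\ref{thmmorse} on an intermediate sub-slab avoiding the height-$t$ vertices to exhibit the same thin slab, up to homotopy, as a nearby non-integer level set with finitely many cones on $L$ or $\widetilde L$ attached. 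Combining these two identifications with the non-integer case already established transfers the $m$-$R$-acyclicity conclusion to $X^{(S)}_t$.
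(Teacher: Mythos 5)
Your argument is correct and is essentially the paper's own proof: both compare $X^{(S)}_t$ with the contractible complex $X^{(S)}_L$ via the slab filtration and Theorem~\ref{thmmorse}, using that every ascending and descending link is a copy of $L$ or $\widetilde L$. Your extra care over the integer-height case (where Theorem~\ref{thmmorse} as stated needs $b<c$) addresses a point the paper's proof silently elides by applying the Morse lemma with the degenerate interval $\{t\}$, and your part (1) as the $m=0$ case is an acceptable variant of the paper's direct $\pi_0$ argument.
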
 

\begin{proof} 
(1) Let $[a,b]$ be any interval in $\rr$ containing $t$.  
The ascending and descending links for each vertex of $X^{(S)}$ are 
connected, from which it follows that each inclusion $X^{(S)}_t\rightarrow 
X^{(S)}_{[a,b]}$ induces a bijection on the set of path components.  
Since $X^{(S)}$ is contractible, it follows that each level set must
be connected, as claimed.  

(2)~and~(3) Under the hypotheses the ascending and descending links
for each vertex in $X^{(S)}$ are either $R$-acyclic, or have 
reduced $R$-homology that vanishes in degree at most $m$.  
Theorem~\ref{thmmorse} implies that 
for each interval $[a,b]$ with $a\leq t\leq b$, 
the relative homology groups $H_i(X^{(S)}_{[a,b]},X^{(S)}_t;R)$ are zero 
either for all $i$ or for $i\leq m+1$.  It follows that the 
map $H_i(X^{(S)}_t;R) \rightarrow H_i(X^{(S)};R)$ is an 
isomorphism, either for all $i$ or for 
$i\leq m$.  Since $X^{(S)}$ is contractible the result follows.  
\end{proof} 

This already allows us to establish part of the main theorem.  

\begin{corollary}\label{cormorse} 
If $L$ and $\widetilde{L}$ are $R$-acyclic 
(resp.~$(n-1)$-$R$-acyclic) 
then for each $S\subseteq \zz$, $G_L(S)$ is 
$FH(R)$ (resp.~$FH_n(R)$).  
\end{corollary}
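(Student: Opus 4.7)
The plan is to read off the corollary directly from the preceding proposition together with Corollary~\ref{corregcov} and Corollary~\ref{corstabs}, by choosing a non-integer level $t$ of the Morse function $f^{(S)}$ and taking the level set $X^{(S)}_t$ as the desired free $G_L(S)$-CW-complex.

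First I would fix any $t \in \rr - \zz$ and consider the subcomplex $X^{(S)}_t = (f^{(S)})^{-1}(t) \subseteq X^{(S)}_L$. Because the Morse function $f^{(S)}$ is integer-valued on vertices by Theorem~\ref{thmexess}, the level set $X^{(S)}_t$ contains no vertices, and Corollary~\ref{corstabs} tells us that the only non-trivial cell stabilizers for the $G_L(S)$-action on $X^{(S)}_L$ occur at vertices of integer height lying outside $S$. Consequently the cellular action of $G_L(S)$ on $X^{(S)}_t$ is free.

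Next I would use Corollary~\ref{corregcov} to identify the quotient $X^{(S)}_t / G_L(S)$ with $X_t / BB_L$. Since $\ttt_L = X_L / A_L$ is a finite CW-complex and $X_t/BB_L$ is the preimage of a single point under the circle-valued Morse function $l_L : \ttt_L \to \rr/\zz$ (equivalently, a non-integer level set in the $\zz$-cover $X_L/BB_L \to \ttt_L$), this quotient is itself a compact CW-complex with finitely many cells. Pulling this back, $X^{(S)}_t$ has only finitely many $G_L(S)$-orbits of cells.

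Finally, to verify the acyclicity requirement I would invoke the proposition immediately preceding the corollary: if $L$ and $\widetilde{L}$ are $R$-acyclic then so is $X^{(S)}_t$, and if both are $(n-1)$-$R$-acyclic then so is $X^{(S)}_t$. Combining this with the previous two observations, $X^{(S)}_t$ is an $R$-acyclic (respectively $(n-1)$-$R$-acyclic) free $G_L(S)$-CW-complex with finitely many orbits of cells, which is exactly the condition defining $FH(R)$ (resp.\ $FH_n(R)$) from Definitions~\ref{deffinone} and~\ref{deffintwo}. There is no real obstacle here; the slightly delicate point, and the one I would take care to justify, is simply the compactness of $X_t / BB_L$ as a cross-section of the $\zz$-cover of the finite Salvetti complex, which ensures the finite-orbit condition.
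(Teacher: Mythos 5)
Your proof is correct and is essentially the paper's own argument: the paper's proof is the one-line observation that $G_L(S)$ acts freely and cocompactly on the non-integer level set $X^{(S)}_t$, which is $R$-acyclic (resp.\ $(n-1)$-$R$-acyclic) by the preceding proposition. You have simply spelled out the freeness and cocompactness in more detail, correctly.
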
 

\begin{proof} 
$G_L(S)$ acts freely cocompactly on the level set $X^{(S)}_t$, 
which is $R$-acyclic (resp.~$(n-1)$-$R$-acyclic) by the proposition.  
\end{proof} 

\begin{proposition} 
For any $t$, $X^{(\emptyset)}_t$ is simply-connected.  For $t=0$, 
$X^{(\{0\})}_0$ is simply-connected. 
\end{proposition}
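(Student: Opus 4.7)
My strategy is to apply Morse theory via Theorem~\ref{thmmorse} and exploit that $X_L^{(S)}$ is simply-connected (being CAT(0), by Theorem~\ref{thmexess}) together with the observation that attaching a cone on a simply-connected subspace does not affect~$\pi_1$ (van Kampen).

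For the first statement, Theorem~\ref{thmexess} shows that every vertex of $X_L^{(\emptyset)}$ has ascending and descending link equal to $\widetilde L$, which is simply-connected as the universal cover of $L$. For any $a<b<c<d$, Theorem~\ref{thmmorse} identifies $X_{[a,d]}^{(\emptyset)}$ up to homotopy with $X_{[b,c]}^{(\emptyset)}$ with cones on copies of $\widetilde L$ attached; since each attaching space is simply-connected, the inclusion $X_{[b,c]}^{(\emptyset)}\hookrightarrow X_{[a,d]}^{(\emptyset)}$ induces an isomorphism on $\pi_1$. Taking the colimit over expanding intervals, the slab $X_{[b,c]}^{(\emptyset)}$ inherits the trivial fundamental group of $X_L^{(\emptyset)}$. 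For any $t\in\rr$, picking $b<t<c$ with $[b,c]\cap\zz\subseteq\{t\}$, the slab $X_{[b,c]}^{(\emptyset)}$ deformation retracts onto $X_t^{(\emptyset)}$ via a PL flow transverse to the level sets in each cube, giving $\pi_1(X_t^{(\emptyset)})=1$.

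For the second statement the same argument applies to $X_L^{(\{0\})}$, but we must avoid coning off the vertex $v_0$ at height~$0$, whose ascending and descending links are $L$ (not necessarily simply-connected). Accordingly I choose a slab $X_{[b,c]}^{(\{0\})}$ with $b<0<c$ and $[b,c]\cap\zz=\{0\}$, which deformation retracts onto $X_0^{(\{0\})}$. Extending $[b,c]$ to exhaust $\rr$, the vertices added at each step lie at non-zero integer heights and so have $\upw$- and $\downw$-links equal to $\widetilde L$, which is simply-connected; hence each attachment is a $\pi_1$-isomorphism, so $\pi_1(X_0^{(\{0\})})=\pi_1(X_L^{(\{0\})})=1$.

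The main technical point is the deformation retraction of a slab onto a level set that passes through a vertex. For non-integer $t$ the slab contains no critical points of $f^{(S)}$ and the retraction is immediate (linear along fibres in each cube). At an integer-height vertex $v$ one invokes the standard PL Morse-theoretic argument on a CAT(0) cube complex: using that $f^{(S)}$ is affine on each cube meeting $v$, one collapses each cube to its level slice through $v$ (a point in the purely ascending or purely descending cubes, a level hyperplane in the mixed cubes), and these transverse retractions agree on shared faces, yielding a continuous deformation retraction of the whole slab onto the level set.
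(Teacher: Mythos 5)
Your proposal is correct and follows essentially the same route as the paper: both identify that every coned-off ascending/descending link in the relevant range is a copy of $\widetilde L$, hence simply-connected, so that $\pi_1(X^{(S)}_t)\cong\pi_1(X^{(S)}_{[a,b]})\cong\pi_1(X^{(S)})=1$. The paper's proof is just a terser version of yours, leaving implicit the deformation retraction of a thin slab onto a level set through a vertex that you spell out at the end.
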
 

\begin{proof} 
In each of these cases, every vertex not of height $t$ has ascending
and descending links isomorphic to $\widetilde{L}$, which is 
simply-connected.  From this it follows that the inclusion of 
$X_t$ into $X_{[a,b]}$ induces an isomorphism of fundamental 
groups for each interval $[a,b]$ containing $t$.  Since $X$ is 
simply-connected, the claim follows. 
\end{proof} 

\begin{corollary}\label{corgzero}
For any $t\notin \zz$, $G_L(\emptyset)$ is isomorphic to
$\pi_1(X_t/BB_L)$.  For $t=0$, $G_L(\{0\})$ is isomorphic 
to $\pi_1(X_0/BB_L)$.  
\end{corollary}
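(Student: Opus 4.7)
The plan is to combine the preceding proposition with Corollary~\ref{corregcov} and then appeal to the standard fact from covering space theory that a simply-connected regular cover is the universal cover, with deck group isomorphic to the fundamental group of the base.

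More precisely, in the first case we take $S=\emptyset$ and $t\notin \zz$. By Corollary~\ref{corregcov}, the map $c:X_t^{(\emptyset)}\to X_t/BB_L$ is a regular (unbranched) covering whose deck transformation group is the subgroup of $G_L(\emptyset)$ preserving the level set $X_t^{(\emptyset)}$, which is all of $G_L(\emptyset)$ since $f^{(\emptyset)}$ is $G_L(\emptyset)$-invariant. The preceding proposition tells us that $X_t^{(\emptyset)}$ is simply-connected. Thus $c$ realizes $X_t^{(\emptyset)}\to X_t/BB_L$ as the universal cover, so $G_L(\emptyset)\cong \pi_1(X_t/BB_L)$.

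For the second case, take $S=\{0\}$ and $t=0$. Since $0\in S$, Corollary~\ref{corregcov} again applies to give a regular unbranched cover $c:X_0^{(\{0\})}\to X_0/BB_L$ with deck group $G_L(\{0\})$; the preceding proposition supplies simple-connectivity of $X_0^{(\{0\})}$, and the same argument yields $G_L(\{0\})\cong\pi_1(X_0/BB_L)$.

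The proof is almost entirely a matter of assembling existing pieces, so there is no serious obstacle. The only subtlety worth checking is that the full deck group of $c$ equals $G_L(S)$ (and not a proper subgroup), which holds because $f^{(S)}$ is $G_L(S)$-invariant (by Theorem~\ref{thmexess}, since $f^{(S)}=\tilde l\circ c$ is pulled back from $X_L/BB_L$), so every element of $G_L(S)$ sends the level set $X_t^{(S)}$ to itself. One should also note that $X_t/BB_L$ is path-connected (as the continuous image of the connected space $X_t^{(S)}$, which is connected by part~(1) of the preceding proposition), so that standard covering space theory applies without issue.
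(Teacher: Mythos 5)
Your proof is correct and follows exactly the paper's argument: set $S=\emptyset$ (resp.\ $S=\{0\}$), invoke Corollary~\ref{corregcov} for the regular unbranched covering with deck group $G_L(S)$ and the preceding proposition for simple-connectivity, and conclude that $X^{(S)}_t$ is the universal cover of $X_t/BB_L$. The extra checks you record (that the full deck group is $G_L(S)$ because $f^{(S)}$ is $G_L(S)$-invariant, and that the base is connected) are sound and only make explicit what the paper leaves implicit.
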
 

\begin{proof} 
Define $S$ by $S=\emptyset$ for $t\notin \zz$ or $S=\{0\}$ for 
$t=0$.  In each case, $X^{(S)}_t$ is the universal covering 
of $X_t/BB_L$, with $G_L(S)$ as its group of deck transformations.  
\end{proof} 

\begin{corollary} 
$G_L(\emptyset)\cong BB_{\widetilde {L}}\semi \pi_1(L)$.   
\end{corollary}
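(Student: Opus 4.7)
The plan is to apply Corollary~\ref{corgzero}, which for $t\notin\zz$ identifies $G_L(\emptyset)$ with $\pi_1(X_t/BB_L)$, and then to exhibit an explicit simply-connected cover of $X_t/BB_L$ on which $BB_{\widetilde L}\semi \pi_1(L)$ acts freely by deck transformations.

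The candidate cover is the level set $X_{\widetilde L, t}\subseteq X_{\widetilde L}$, where $X_{\widetilde L}$ is the universal cover of the Salvetti complex $\ttt_{\widetilde L}$ associated to the (infinite) RAAG $A_{\widetilde L}$, equipped with its own Morse function $f_{\widetilde L}$. Since $\widetilde L$ is simply-connected, the Morse-theoretic argument already used in the excerpt applies verbatim to $X_{\widetilde L}$, showing that $X_{\widetilde L, t}$ is simply-connected. The subgroup $BB_{\widetilde L}\le A_{\widetilde L}$ acts freely on $X_{\widetilde L}$ preserving $f_{\widetilde L}$. The deck group $\pi_1(L)$ of the covering $\widetilde L\to L$ permutes the vertices of $\widetilde L$, acts on $A_{\widetilde L}$ by automorphisms preserving $l_*$, acts on $\ttt_{\widetilde L}$ fixing its unique vertex, and therefore lifts (after pinning down a basepoint vertex of $X_{\widetilde L}$ at height~$0$) to an action on $X_{\widetilde L}$ that preserves $f_{\widetilde L}$. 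Together these produce an action of $BB_{\widetilde L}\semi \pi_1(L)$ on $X_{\widetilde L, t}$.

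I would then check that this action is free when $t\notin \zz$: the $BB_{\widetilde L}$-action is free on all of $X_{\widetilde L}$ as a subaction of $A_{\widetilde L}$, and the vertices of $X_{\widetilde L}$ fixed by a nontrivial $\gamma\in \pi_1(L)$ lie only at integer heights (because $\pi_1(L)$ acts freely on $\widetilde L^{0}$, so the only $\gamma$-fixed element of $A_{\widetilde L}$ is the identity, and the fixed vertex set is its $BB_{\widetilde L}$-translate, all at height~$0$). To identify the quotient, observe that the cell structure of $\ttt_{\widetilde L}$ corresponds to simplices of $\widetilde L$ and is $\pi_1(L)$-equivariant, so $\ttt_{\widetilde L}/\pi_1(L)= \ttt_L$ cell-by-cell; consequently $X_{\widetilde L}/(A_{\widetilde L}\semi \pi_1(L))= \ttt_L$ and $X_{\widetilde L}/(BB_{\widetilde L}\semi \pi_1(L))$ is a $\zz$-cover of $\ttt_L$. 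The Morse function descends to this quotient as a lift of $l_L\colon \ttt_L\to \rr/\zz$ to $\rr$, so by uniqueness of the $\zz$-cover classified by $l_*\colon A_L\to \zz$ it must agree with $X_L/BB_L$. Restricting to height~$t$ gives $X_{\widetilde L, t}/(BB_{\widetilde L}\semi \pi_1(L))\cong X_t/BB_L$, so $X_{\widetilde L, t}\to X_t/BB_L$ is a universal covering with deck group $BB_{\widetilde L}\semi \pi_1(L)$, and Corollary~\ref{corgzero} gives the desired isomorphism.

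The main obstacle is the bookkeeping in the last step: one must verify that the $\pi_1(L)$-action lifts consistently from $\ttt_{\widetilde L}$ to $X_{\widetilde L}$ so as to combine with $BB_{\widetilde L}$ into a genuine semidirect product (rather than a twisted extension), and that the resulting $\zz$-cover of $\ttt_L$ really is $X_L/BB_L$ and not some other $\zz$-cover. Both points reduce to straightforward but slightly fiddly checks on the Morse function, on vertex stabilizers, and on the cellular quotient; once these are in place the rest of the argument is formal.
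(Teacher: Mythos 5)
Your proposal is correct and follows essentially the same route as the paper: both pass to the level set $X_{\widetilde L,t}$ of the Morse function on $X_{\widetilde L}$ induced by the branched covering $X_{\widetilde L}\to X_L$ coming from $\ttt_{\widetilde L}/\pi_1(L)=\ttt_L$, observe it is simply-connected because all ascending and descending links are copies of $\widetilde L$, and identify $X_{\widetilde L,t}\to X_t/BB_L$ as a universal covering with deck group $BB_{\widetilde L}\semi\pi_1(L)$, then invoke Corollary~\ref{corgzero}. The only quibble is your freeness check: rather than analysing $\gamma$-fixed elements of $A_{\widetilde L}$ (your claim that only the identity is fixed is not obviously true in general), it is cleaner to note, as the paper does, that $\pi_1(L)$ acts on $\ttt_{\widetilde L}$ freely away from its unique vertex, so $A_{\widetilde L}\semi\pi_1(L)$ acts on $X_{\widetilde L}$ freely away from the vertices, which all lie at integer heights.
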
 

\begin{proof} 
It suffices to identify $BB_{\widetilde{L}}\semi\pi_1(L)$ with the 
fundamental group of $X_t/BB_L$ for $t\notin \zz$.  We start by
constructing a branched covering of $X_L$ by $X_{\widetilde {L}}$.  
The action of $\pi_1(L)$ on $\widetilde{L}$ by deck transformations
induces an action on $\ttt_{\widetilde{L}}$ which 
fixes the unique vertex and is free at other points.  Furthermore, 
there is a natural identification $\ttt_{\widetilde{L}}/\pi_1(L) 
= \ttt_L$.  This exhibits a branched regular covering map 
$\ttt_{\widetilde{L}} \rightarrow \ttt_L$, with branching 
only at the vertex and with $\pi_1(L)$ as its group of deck 
transformations.  The induced map of universal covering 
spaces is a branched covering $X_{\widetilde {L}}\rightarrow 
X_L$, which is regular and equivariant for the actions of the 
groups $A_{\widetilde{L}}\semi\pi_1(L)=\pi_1(\ttt_{\widetilde{L}})\semi 
\pi_1(L)$ and $A_L$ respectively.  The composite of the branched
covering map and the Morse function $f:X_L\rightarrow \rr$ induces a
Morse function on $\widetilde{f}:X_{\widetilde{L}}\rightarrow\rr$, 
in which each ascending and descending link is isomorphic to 
$\widetilde{L}$.  Furthermore, the subgroup of
$A_{\widetilde{L}}\semi\pi_1(L)$ that preserves 
level sets of this Morse function is
$BB_{\widetilde{L}}\semi\pi_1(L)$.  

Since all ascending and descending links in $X_{\widetilde{L}}$ are 
1-connected, we see that for any $t$, the level set
$X_{\widetilde{L},t}=\widetilde{f}^{-1}(t)$ is simply-connected.  For 
$t\notin \zz$, we see that $X_{\widetilde{L},t}\rightarrow X_t/BB_L$ 
is a universal covering map, with $BB_{\widetilde{L}}\semi\pi_1(L)$ 
as its group of deck transformations.  Hence we get isomorphisms 
$BB_{\widetilde{L}}\semi\pi_1(L) \cong \pi_1(X_t/BB_L)\cong
G_L(\emptyset)$ by comparing the two descriptions of $\pi_1(X_t/BB_L)$.  
\end{proof} 

\begin{corollary} For each $S\subseteq \zz$, $G_L(S)$ is finitely
  generated. 
\end{corollary}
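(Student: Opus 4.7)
The plan is to display $G_L(S)$ as a quotient of the fundamental group of a compact connected space. Fix any $t \in \rr\setminus\zz$. By Corollary~\ref{corregcov} the map $c: X^{(S)}_t \to X_t/BB_L$ is a regular (unbranched) covering with deck group $G_L(S)$, and the first proposition of this section tells us that $X^{(S)}_t$ is path-connected. Hence $G_L(S)$ is a quotient of $\pi_1(X_t/BB_L)$, so it suffices to prove that $\pi_1(X_t/BB_L)$ is finitely generated, and for this it is enough to show that $X_t/BB_L$ is a compact CW complex.

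To see compactness I would use that $\tilde l : X_L/BB_L \to \rr$ is the pullback of $l_L : \ttt_L \to \rr/\zz$ along the universal cover $\rr \to \rr/\zz$. The $\zz$-deck group of $X_L/BB_L \to \ttt_L$ permutes the level sets $\tilde l^{-1}(t+n)$ simply transitively, so the quotient map $X_L/BB_L \to \ttt_L$ restricts on the single level $\tilde l^{-1}(t)$ to a homeomorphism onto $l_L^{-1}(t \bmod 1) \subseteq \ttt_L$. Since $\ttt_L$ is a finite cube complex and the preimage of any non-integer point of $\rr/\zz$ under the sum-of-coordinates map on each individual cube is a finite union of affine codimension-one slices, $l_L^{-1}(t \bmod 1)$ is a finite polytopal complex, hence compact.

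There is no substantial obstacle here; the only real content is the identification $X_t/BB_L \cong l_L^{-1}(t \bmod 1)$. Once that is in hand, the corollary follows from standard facts (compact connected CW complexes have finitely generated fundamental group, and quotients of finitely generated groups are finitely generated), with no further Morse-theoretic or Brown-criterion input required.
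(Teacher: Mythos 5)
Your argument is correct and is essentially the paper's own proof: the paper likewise observes that $X_t/BB_L$ is a compact polyhedral complex (so its fundamental group, identified there with $G_L(\emptyset)$, is finitely generated) and that each $G_L(S)$ is the deck group of a regular connected covering of it, hence a quotient. The only difference is that you spell out the compactness of the level set and bypass the explicit identification $\pi_1(X_t/BB_L)\cong G_L(\emptyset)$, neither of which changes the substance.
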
 

\begin{proof} 
$G_L(\emptyset)$ is the fundamental group of the compact polyhedral
  complex $X_t/BB_L$ for any $t\notin \zz$, and so is finitely
  generated.  Each $G_L(S)$ can be viewed as the group of deck
  transformations of a regular covering of $X_t/BB_L$, and hence as a
  quotient of $G_L(\emptyset)$.  
\end{proof} 

\section{$G_L(S)$ as a fundamental group} 

In this section we realize $G_L(S)$ as either a fundamental 
group (if $L$ has nlcp) or as the image of a map of fundamental
groups (in general).  This clarifies $G_L$ as a functor of $S$
and allows us to exhibit isomorphisms between $G_L(S)$ and $G_L(T)$ 
in some cases.  

Recall that $X_L/BB_L$ is equipped with a height function 
$l:X_L/BB_L\rightarrow \rr$.  The height function $l$ 
establishes a bijection between the vertex set of $X_L/BB_L$ 
and the integers.  For $S\subseteq \zz$ we define $V(S)=V_L(S)$ 
to be the set of vertices whose height is \emph{not} in $S$: 
\[V(S)=\{v\,\, \hbox{a vertex of}\,\, X_L/BB_L \colon l(v)\notin S\}.\] 

\begin{proposition} 
If $L$ has nlcp, then $G_L(S)\cong \pi_1(X_L/BB_L-V(S))$.   
\end{proposition}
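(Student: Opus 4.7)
The plan is to unwind the construction of $X_L^{(S)}$ used in Theorem~\ref{thmexess} in the nlcp case. Since $L$ has nlcp, Proposition~\ref{propfundgp} gives that $\sss(\pi):\sss(\widetilde{L})\to\sss(L)$ is a universal covering, and so $X_L^{(S)}$ is (as noted in the proof of Theorem~\ref{thmexess}) produced directly by applying Theorem~\ref{thmcatocov} to $Z:=X_L/BB_L$ with branching set $V:=V(S)$. Inside that construction, $X_L^{(S)} = Y\coprod W$, where $Y$ is the universal cover of $Z-V(S)$ (connected, by the corollary preceding Theorem~\ref{thmcatocov} together with Proposition~\ref{propcon}) and $W$ is a set of vertices formed as equivalence classes of edge-ends of $Y$ that project to vertices of $V(S)$. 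The group $G_L(S)$ is by definition the deck transformation group of the branched cover $c:X_L^{(S)}\to Z$, while by covering space theory $\pi_1(Z-V(S))$ is the deck transformation group of $c|_Y:Y\to Z-V(S)$, so the task is to identify these two deck groups.

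To do this I would show that restriction to $Y$ defines a group isomorphism $\rho:G_L(S)\to\mathrm{Deck}(Y\to Z-V(S))$. Well-definedness: any deck transformation of $c$ is a cellular self-map permuting cubes, and it must carry $c^{-1}(V(S))$ to itself because these are exactly the vertices whose link is $\sss(\widetilde{L})$ rather than $\sss(L)$; its complement is $Y$. Injectivity: $Y$ is open and dense in $X_L^{(S)}$, so two deck transformations agreeing on $Y$ agree everywhere by continuity. Surjectivity: a deck transformation $g$ of $Y\to Z-V(S)$ is a cubical automorphism of the cubical-complex-with-missing-vertices $Y$, hence permutes the set $W'$ of orphan edge-ends; since the relation $\sim$ generating $W=W'/\sim$ is defined in terms of corners of squares of $Y$ and $g$ preserves squares, $g$ descends to a bijection on $W$. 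Combining $g$ on $Y$ with this bijection on $W$ yields a self-map of $X_L^{(S)}$, and by construction of the cell structure (an added vertex $w\in W$ is incident on precisely the edges whose orphan ends lie in the class $w$), this self-map is cellular and covers the identity on $Z$, so lies in $G_L(S)$.

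The main obstacle is the surjectivity step — specifically checking that the extension of $g$ over the branch-vertex set $W$ really is a cellular (hence continuous) automorphism of $X_L^{(S)}$, not merely a set-theoretic bijection respecting edge-incidence. This amounts to verifying that the combinatorial data listed in Theorem~\ref{thmcatocov} (links equal to $\sss(\widetilde{L})$ at branch vertices, described via the covering $\sss(\pi)$) is intrinsic to $Y$ near the missing vertex and is therefore preserved under any cubical automorphism of $Y$. Everything else is then routine: well-definedness and injectivity are immediate, and composing the resulting isomorphism $\rho$ with the standard identification $\mathrm{Deck}(Y\to Z-V(S))\cong \pi_1(Z-V(S))$ gives the claimed $G_L(S)\cong \pi_1(X_L/BB_L-V(S))$.
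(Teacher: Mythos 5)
Your argument is correct and lands on the same identification as the paper: $Y:=X_L^{(S)}-W(S)$ is the universal cover of $X_L/BB_L-V(S)$ and $G_L(S)$ is its deck group. The routes to the two key facts differ slightly, though. You read off simple connectivity of $Y$ from the construction in Theorem~\ref{thmcatocov} (where $Y$ is \emph{defined} as the universal cover of $Z-V(S)$), whereas the paper re-derives it inside $X_L^{(S)}$ from the $1$-connectivity of $\sss(\widetilde{L})$ via the link deformation retraction; these are interchangeable. More substantively, the step you single out as the main obstacle --- surjectivity of the restriction map $\rho$, i.e.\ extending an arbitrary deck transformation of $Y\to Z-V(S)$ over the added vertex set $W$ --- can be bypassed entirely. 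Since $c$ is a \emph{regular} branched covering with deck group $G_L(S)$ (Theorem~\ref{thmexess}), $G_L(S)$ already acts on $Y$ freely (as $W(S)$ is exactly the fixed-point set), transitively on fibres, with quotient $Z-V(S)$; a free action on a simply connected space with that quotient must realize the full deck group $\pi_1(Z-V(S))$, since $Y/\rho(G_L(S))\to Z-V(S)$ is a covering with fibre $\mathrm{Deck}/\rho(G_L(S))$, forcing this quotient to be trivial. If you do want the extension argument, the cleanest justification is the remark after Theorem~\ref{thmcatocov}: $X_L^{(S)}$ is the metric completion of $Y$ in the induced path metric, and deck transformations are isometries, so they extend canonically to the completion; this settles the continuity worry without any combinatorial bookkeeping about edge-ends and corners of squares.
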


\begin{proof} 
Let $W(S)$ denote the set of vertices of $X_L^{(S)}$ that have height
not in $S$, so that $W(S)$ consists of the vertices of $X_L^{(S)}$ 
with link $\sss(\widetilde{L})$.  Note also that $W(S)$ is equal to the 
set of points of $X_L^{(S)}$ with non-trivial stabilizer in $G_L(S)$.  
Since $L$ has nlcp, $\sss(\widetilde{L})$ is 1-connected, from which it 
follows that $X_L^{(S)}-W(S)$ is simply-connected, and so $G_L(S)$ 
can be identified with the group of deck transformations of the 
universal covering of $X_L^{(S)}-W(S)$.  
\end{proof} 

To describe $G_L(S)$ in terms of fundamental groups when $L$ does not
have nlcp, we embed $L$ as a full subcomplex of $M=M(L)$ as in the 
statement of Lemma~\ref{lemnlpc}.  

\begin{proposition}
In general, $G_L(S)$ is isomorphic to the image of the map 
\[\pi_1(X_L/BB_L -V_L(S))\rightarrow \pi_1(X_M/BB_M-V_M(S)).\]
\end{proposition}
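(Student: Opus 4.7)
My plan is to embed $L$ as a full subcomplex of $M = M(L)$ using Lemma~\ref{lemnlpc}, so that $M$ has nlcp, and then transfer the preceding proposition applied to $M$ to the present setting. That proposition identifies $G_M(S)$ with $\pi_1(X_M/BB_M - V_M(S))$ via the deck action on the universal cover $X_M^{(S)} - W_M(S) \to X_M/BB_M - V_M(S)$, where $W_M(S)$ denotes the set of vertices of $X_M^{(S)}$ of height in $\zz - S$. Following the proof of Theorem~\ref{thmexess}, I would view $X_L^{(S)}$ as a distinguished connected component $Y$ of the preimage $c_M^{-1}(X_L/BB_L)$ under the branched covering $c_M : X_M^{(S)} \to X_M/BB_M$, with $G_L(S)$ realized as the stabilizer $\{g \in G_M(S) : gY = Y\}$.

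Next I would compute the preimage of $X_L/BB_L - V_L(S)$ inside the universal cover $X_M^{(S)} - W_M(S)$. Since heights are preserved by the inclusion $X_L/BB_L \hookrightarrow X_M/BB_M$, we have $V_L(S) = V_M(S) \cap X_L/BB_L$, so this inclusion restricts to $X_L/BB_L - V_L(S) \hookrightarrow X_M/BB_M - V_M(S)$ and induces the map in the statement. Setting $W_L(S) = Y \cap W_M(S)$, and using that (by the proof of Theorem~\ref{thmexess}) $G_M(S)$ acts transitively on the components of $c_M^{-1}(X_L/BB_L)$, so that $c_M^{-1}(X_L/BB_L) = \bigsqcup gY$ over coset representatives $g$ of $G_L(S)$ in $G_M(S)$, one obtains
\[c_M^{-1}(X_L/BB_L - V_L(S)) = \bigsqcup g(Y - W_L(S)).\]
Since $Y = X_L^{(S)}$ is CAT(0) and each of its vertex links is either $\sss(L)$ or $\sss(\widetilde L)$, both connected by Proposition~\ref{propcon} (as $L$ is connected and not a single point), the corollary to Theorem~\ref{thmdefretr} shows that $Y - W_L(S)$ is connected; hence the sets $g(Y - W_L(S))$ are exactly the path components of the preimage.

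The proof then concludes by invoking the standard correspondence between subgroups of the fundamental group and connected coverings: the stabilizer in $G_M(S) = \pi_1(X_M/BB_M - V_M(S))$ of the component $Y - W_L(S)$ of the preimage of $X_L/BB_L - V_L(S)$ equals the image of the map $\pi_1(X_L/BB_L - V_L(S)) \to \pi_1(X_M/BB_M - V_M(S))$ induced by inclusion. Since $W_L(S)$ is determined by $Y$ as its set of vertices of height in $\zz - S$, the condition $gY = Y$ coincides with $g(Y - W_L(S)) = Y - W_L(S)$, and so this stabilizer is precisely $G_L(S)$. The main obstacle is the bookkeeping needed to identify $c_M^{-1}(X_L/BB_L - V_L(S))$ with the disjoint union of translates of $Y - W_L(S)$ and to verify the connectedness of $Y - W_L(S)$; once these are in place, the statement follows by a routine application of covering space theory, with basepoints chosen compatibly so that the two descriptions of $G_M(S)$ (as deck group and as fundamental group) agree.
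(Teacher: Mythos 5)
Your argument is correct and follows essentially the same route as the paper: both identify $X_L^{(S)}-W_L(S)$ (your $Y-W_L(S)$) as a $G_L(S)$-invariant, connected subspace of the simply-connected cover $X_M^{(S)}-W_M(S)$ lying over $X_L/BB_L-V_L(S)$, and then read off the image of the induced map on fundamental groups as the stabilizer $G_L(S)\leq G_M(S)$. Your version simply makes explicit the component decomposition of the preimage and the connectedness of $Y-W_L(S)$ (via the corollary to Theorem~\ref{thmdefretr}), details the paper leaves implicit.
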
 

\begin{proof} 
Recall that in the general case, we defined $X_L^{(S)}$ as a 1-connected 
subspace of $X_M^{(S)}$ for $M=M(L)$ as in Lemma~\ref{lemnlpc}, 
and we defined $G_L(S)$ as the subgroup of $G_M(S)$ that preserves 
this subspace.  As before, $G_L(S)$ acts freely on $X_L^{(S)}-W_L(S)$, 
and although this space need not be simply-connected, it admits a 
$G_L(S)$-equivariant map to the simply-connected space
$X_M^{(S)}-W_M(S)$.  Since $G_L(S)\leq G_M(S)$, this gives an 
identification of $G_L(S)$ with the claimed image.  
\end{proof} 

\begin{remark} \label{rkfunctor} 
For each $S\subseteq T\subseteq \zz$, there are inclusions of spaces 
$X_L/BB_L-V(S)\subseteq X_L/BB_L-V(T)$ and 
$X_M/BB_M-V(S)\subseteq X_M/BB_M-V(T)$.  The induced maps of
fundamental groups give a way to view $S\mapsto G_L(S)$ as a 
functor from subsets of $\zz$ with inclusion to groups and 
surjective homomorphisms.    
\end{remark}

An affine isometry of $\zz$ is by definition a map of the form 
$\phi(n)= m+n$ or $\phi(n)=m-n$ for some $m\in \zz$.  

\begin{corollary} 
If $\phi(S)=T$ for some affine isometry $\phi$, then $G_L(S)\cong
G_L(T)$.  
\end{corollary}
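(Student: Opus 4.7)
The plan is to realize each affine isometry $\phi$ of $\zz$ as the effect on heights of a cellular automorphism $\bar\phi$ of $X_L/BB_L$, and then lift $\bar\phi$ to an isomorphism of branched covers $X_L^{(T)}\to X_L^{(S)}$ whose induced conjugation identifies the deck groups. Since the affine isometries of $\zz$ are generated by the translations $n\mapsto n+m$ and the reflection $n\mapsto -n$, it suffices to treat these two cases separately.

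For the translation case, I would use the action of $A_L/BB_L\cong\zz$ on $X_L/BB_L$ by deck transformations: any $g\in A_L$ with $l_*(g)=m$ yields a cellular automorphism that shifts the height function $\tilde l$ by $m$. For the reflection, I would construct a height-negating involution of $\ttt_L$ by assembling the involutions $t\mapsto -t$ of the circles $\ttt_v=\rr/\zz$, each of which is cellular (fixing the $0$-cell and reversing the $1$-cell); these are compatible across subtori $\ttt_\sigma$ and so define an involution of $\ttt_L$ that negates $l_L$. The induced automorphism of $\pi_1(\ttt_L)=A_L$ is $v\mapsto v^{-1}$, which preserves $BB_L=\ker(l_*)$ because it negates $l_*$. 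Lifting to $X_L$ and passing to the quotient yields the desired height-negating involution $\bar\phi$ of $X_L/BB_L$.

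In the case when $L$ has nlcp, the branched cover $X_L^{(S)}\to X_L/BB_L$ is produced by Theorem~\ref{thmcatocov} applied to the pair $(X_L/BB_L,V_L(S))$, and that construction (take the universal cover of the complement of $V_L(S)$, then attach vertices) is manifestly functorial for automorphisms of the pair. Since $\phi(S)=T$, the automorphism $\bar\phi$ sends $V_L(T)$ bijectively onto $V_L(S)$, and therefore lifts to an isomorphism $X_L^{(T)}\to X_L^{(S)}$ covering $\bar\phi$. Conjugation by this lift delivers the isomorphism $G_L(T)\cong G_L(S)$.

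In the general case one performs the same construction for $M=M(L)$. The translation automorphism extends across $A_L\hookrightarrow A_M$, and the reflection involution is defined identically on $\ttt_M$; by naturality of $M(L)$ in Lemma~\ref{lemnlpc}, each of these automorphisms of $X_M/BB_M$ preserves the subspace $X_L/A_L\subseteq X_M/A_M$. The resulting isomorphism $X_M^{(T)}\to X_M^{(S)}$ therefore permutes the connected components of the preimage of $X_L/A_L$, sending the distinguished component $X_L^{(T)}\subseteq X_M^{(T)}$ to some connected component of the preimage in $X_M^{(S)}$, which is isomorphic to $X_L^{(S)}$ and whose $G_M(S)$-stabilizer is conjugate to $G_L(S)$. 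The main obstacle I anticipate is precisely this last compatibility between component-selection and the constructed automorphisms — verifying that the reflection on $\ttt_M$ genuinely restricts to the corresponding reflection on $\ttt_L$ and that both lifts descend to well-defined maps of quotients — but this is a bookkeeping argument that should follow directly from the functoriality statements in Lemma~\ref{lemnlpc} and Theorem~\ref{thmcatocov}.
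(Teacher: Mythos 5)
Your proof is correct and amounts to an explicit version of the paper's own (one-line) argument: the paper simply replaces the height function $l$ by $l'=\phi\circ l$ and observes that the set of branch vertices determined by $(l',T)$ coincides with that determined by $(l,S)$, leaving implicit the symmetries of $X_L/BB_L$ that make this relabelling legitimate. Your translation automorphisms (coming from the $A_L/BB_L$-action) and the height-negating involution of $\ttt_L$ supply exactly those symmetries, so the two arguments are essentially the same, with yours filling in the details.
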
 

\begin{proof} 
Extend $\phi$ to an affine isometry of $\rr$ given by the same 
formula, and as usual let $l:X_L/BB_L\rightarrow \rr$ be the 
height function on $X_L/BB_L$.  Define a new height function 
on $X_L/BB_L$ as $l'=\phi\circ l$, and note that $l(v)\in S$ 
if and only if $l'(v)\in T$.  
\end{proof}

Provided that $L$ is not simply-connected, we know of no cases when 
$G_L(S)$ is isomorphic to $G_L(T)$ except those that arise as from 
this corollary.  

\begin{corollary}\label{brancov} 
For any $S\subseteq T\subseteq \zz$, $X_L^{(S)}$ is a regular branched
cover of $X_L^{(T)}$, branched only at vertices of height in $T-S$.  
\end{corollary}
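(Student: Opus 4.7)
The plan is to construct a morphism of branched covers $q\colon X_L^{(S)}\to X_L^{(T)}$ over $X_L/BB_L$ --- i.e.\ a continuous cellular map satisfying $c_T\circ q=c_S$ --- and then to read off both assertions (branch locus and regularity) from the local link structure supplied by Theorem~\ref{thmexess}.

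Assume first that $L$ has nlcp, so that Theorem~\ref{thmexess} together with Theorem~\ref{thmcatocov} presents $X_L^{(S)}$ as the completion-by-vertices of the universal cover of $X_L/BB_L-V_L(S)$, and similarly $X_L^{(T)}$ as the completion of the universal cover of $X_L/BB_L-V_L(T)$. Since $S\subseteq T$ gives the reverse inclusion $V_L(T)\subseteq V_L(S)$, the restriction of $c_T$ to $c_T^{-1}(X_L/BB_L-V_L(S))\cap (X_L^{(T)}\setminus c_T^{-1}(V_L(T)))$ is an ordinary covering of $X_L/BB_L-V_L(S)$, and the universal property of universal covers supplies a covering map from $X_L^{(S)}\setminus c_S^{-1}(V_L(S))$ onto this subspace of $X_L^{(T)}$. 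I would then extend across the remaining vertices: those of $X_L^{(S)}$ at heights in $\zz-T$ are branch points of both $c_S$ and $c_T$ and map to branch points of $c_T$, while those at heights in $T-S$ are branch points of $c_S$ but ordinary vertices of $X_L^{(T)}$, and the extension at such a vertex is dictated locally by the covering $\sss(\widetilde L)\to \sss(L)$ of Corollary~\ref{covcor}.

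With $q$ in hand, Theorem~\ref{thmexess} shows that $q$ is a local isomorphism except at vertices of $X_L^{(T)}$ of height in $T-S$: at such a vertex the link on the target is $\sss(L)$ while every preimage vertex has link $\sss(\widetilde L)$, and $q$ restricts on links to the covering $\sss(\widetilde L)\to \sss(L)$. That is precisely branching at heights in $T-S$ and nowhere else. For regularity, let $K$ be the kernel of the surjection $G_L(S)\twoheadrightarrow G_L(T)$ of Remark~\ref{rkfunctor}. Then $K$ acts on $X_L^{(S)}$ by deck transformations of $c_S$, and since $K$ maps to the identity in $G_L(T)$ it preserves each fiber of $c_T$; hence $K$ acts by deck transformations of $q$. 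Transitivity of $K$ on fibers of $q$ is a short diagram chase from the transitivities of $G_L(S)$ on fibers of $c_S$ and of $G_L(T)$ on fibers of $c_T$.

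In the general case I would reduce to the nlcp case exactly as in the proof of Theorem~\ref{thmexess}: embed $L$ as a full subcomplex of $M=M(L)$ from Lemma~\ref{lemnlpc}, build $q_M\colon X_M^{(S)}\to X_M^{(T)}$ by the above recipe, and check that $q_M$ sends the chosen connected component $X_L^{(S)}$ of the preimage of $X_L/A_L\subseteq X_M/A_M$ in $X_M^{(S)}$ into such a component in $X_M^{(T)}$, which is a translate of $X_L^{(T)}$ by an element of $G_M(T)$. The main technical obstacle I expect is the bookkeeping in the extension of $q$ over the newly added vertices at heights in $T-S$: one must check that the forced local extension (by the covering of links) is consistent with the combinatorial adjacency relations inherited from $X_L^{(S)}$. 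However, because $X_L^{(S)}$ itself was built in Theorem~\ref{thmcatocov} by the very same adding-vertices procedure starting from a complement, the extension is essentially dictated by the construction and should present no genuine obstruction.
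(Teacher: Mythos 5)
Your proposal is correct and follows essentially the same route as the paper: both use the functorial map of Remark~\ref{rkfunctor} on the vertex-deleted subspaces $X_L^{(S)}-W(S)\rightarrow X_L^{(T)}-W(T)$, complete over the missing vertices, read off the branch locus from the links ($\sss(\widetilde L)$ over $\sss(L)$ exactly at heights in $T-S$), and identify the deck group with the kernel $K$ of $G_L(S)\twoheadrightarrow G_L(T)$. The paper phrases this as the isomorphism $X_L^{(S)}/K\cong X_L^{(T)}$ obtained by completing the quotient map, which is the same construction viewed from the other end; your treatment of the non-nlcp case via $M(L)$ is left implicit in the paper but matches its conventions.
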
 

\begin{proof} 
The viewpoint of Remark~\ref{rkfunctor} gives a map $X_L^{(S)}-W(S)
\rightarrow X_L^{(T)}-W(T)$ which is equivariant for the map 
$G_L(S)\twoheadrightarrow G_L(T)$.  (Here as before, we let $W(S)$ 
denote the vertices of $X_L^{(S)}$ that have height not in $S$.)  
If $K$ denotes the kernel of the map $G_L(S)\rightarrow G_L(T)$, 
then $K$ acts freely on $X_L^{(S)}-W(S)$ and one obtains an 
isomorphism $(X_L^{(S)}-W(S))/K\rightarrow X_L^{(T)}-W(T)$.  
Completing this map gives an isomorphism $X_L^{(S)}/K\cong
X_L^{(T)}$.  The vertices of $X_L^{(S)}$ that are fixed by
some non-trivial element of $K$ are precisely the vertices 
of height in $T-S$.  
\end{proof}

\section{Sheets in $X^{(S)}_L$} 

An $n$-flat in a CAT(0) complex is a subcomplex isometric to $\rr^n$.  
In~\cite{BB} an $n$-sheet in $X_L$ is defined to be an $n$-flat whose 
image in $\ttt_L$ is a single subtorus $\ttt_\sigma$ for some 
$(n-1)$-simplex $\sigma$ of $L$.  Since we do not have a cocompact 
group action on every $X^{(S)}_L$ this definition does not apply 
directly.  

An $n$-sheet in $X^{(\emptyset)}_L=X_{\widetilde{L}}$ can be defined to 
be an $n$-flat whose image in $\ttt_{\widetilde{L}}$ is a subtorus
$A_\sigma$ for some $(n-1)$-simplex of $\widetilde{L}$.  Now an 
$n$-sheet in $X^{(S)}_L$ can be defined to be the image of any 
$n$-sheet of $X^{(\emptyset)}_L$ under the branched covering map 
$X^{(\emptyset)}_L\rightarrow X^{(S)}_L$.  We prefer the following
equivalent definition: an $n$-sheet in $X^{(S)}$ is an $n$-flat $F$ 
with the property that for every vertex $v$ of $X^{(S)}$, the 
subcomplex $\Lk_F(v)\subseteq \Lk_{X^{(S)}}(v)$ is closed under 
taking opposite pairs.  Here we use Proposition~\ref{oppprop} to 
define opposite pairs in $\Lk_{X^{(S)}}(v)$.  

Under the branched covering map $X^{(S)}_L\rightarrow \ttt_L$, each 
$n$-sheet will map to an $n$-subtorus $\ttt_\sigma$, however for 
$S\neq \zz$ there are many $n$-flats with this property that are not 
$n$-sheets.  

For $v$ a vertex of $X_L^{(S)}$, the link $\Lk_X(v)$ is a copy 
of $\sss(L)$ if $f(v)\in S$ or $\sss(\widetilde{L})$ if $f(v)\notin S$.  
For each vertex $v$ with $f(v)\in S$ (resp.~with $f(v)\notin S$) 
and each simplex $\sigma$ of $L$ (resp.~of $\widetilde{L}$) there 
is a unique sheet $C(v,\sigma)$ containing $v$ and such that 
$\Lk_C(v)=\sss(\sigma)\subseteq \Lk_X(v)$.  Let $\Lambda$ denote 
$\rr^n$ with its standard tesselation by unit cubes, and let 
$\phi:\Lambda\rightarrow C(v,\sigma)$ be an isomorphism with 
the following properties: $\phi((0,0,\ldots,0))=v$, and 
$f(\phi(t_1,\ldots,t_n))=f(v)+\sum_{i=1}^n t_i$.  Now define 
the upward and downward parts of the sheet by 
\[C_\upw(v,\sigma) = \phi((\rr_{\geq 0})^n),\qquad 
C_\downw(v,\sigma) = \phi((\rr_{\leq 0})^n).\] 
Thus $C_\upw(v,\sigma)$ is the points $x$ of $C(v,\sigma)$ for
which the geodesic from $v$ to $x$ leaves $v$ in a point of 
$\Lk_\upw(v)\cap \sss(\sigma)$ and similarly for the downward 
part.  

If $f(v)\in S$ (resp.~$f(v)\notin S$), and 
$K$ is any subcomplex of $L$ (resp.~$\widetilde{L}$), define 
\[C(v,K)=\bigcup_{\sigma\in K}C(v,\sigma). \qquad
C_\upw(v,K)=\bigcup_{\sigma\in K}C_\upw(v,\sigma),\qquad 
C_\downw(v,K)=\bigcup_{\sigma\in K}C_\downw(v,\sigma).\]
For $f(v)>t$, the shadow $S_{v,K}$ is $X_t\cap C_\downw(v,K)$, 
and for $f(v)<t$ the shadow $S_{v,K}$ is $X_t\cap C_\upw(v,K)$.  

\begin{proposition} \label{propshadow}
For any $t< f(v)$, and any $K$, the retraction map 
$X-\{v\}\rightarrow \Lk_X(v)$ induces a homeomorphism from $S_{v,K}$
to $K\subseteq \Lk^\downw_X(v)\subseteq \Lk_X(v)$.  Similarly, 
if $t> f(v)$ then the same retraction map induces a 
homeomorphism from $S_{v,K}$ to $K\subseteq \Lk^\upw_X(v)\subseteq 
\Lk_X(v)$.  
\end{proposition}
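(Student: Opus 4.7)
The plan is to treat the case $t < f(v)$ in detail; the case $t > f(v)$ is symmetric, with ``descending'' replaced by ``ascending'' throughout.

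First I would reduce to the case when $K = \sigma$ is a single simplex. By the definition of $S_{v,K}$ we have $S_{v,K} = \bigcup_{\sigma \in K} S_{v,\sigma}$, matching the decomposition $K = \bigcup_{\sigma \in K} \sigma$, and whenever $\tau \subseteq \sigma$ the inclusion $C(v,\tau) \subseteq C(v,\sigma)$ forces $S_{v,\tau} \subseteq S_{v,\sigma}$. So if for each $\sigma \in K$ I produce a homeomorphism $r|_{S_{v,\sigma}} : S_{v,\sigma} \to \sigma$, the different $\sigma$ will agree on overlaps (both being restrictions of the single retraction $r$) and glue to the required global homeomorphism $S_{v,K} \to K$.

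For a single $(n{-}1)$-simplex $\sigma$, I would work in the coordinate chart $\phi : \Lambda = \rr^n \to C(v,\sigma)$ supplied by the construction of sheets. Under $\phi$, the vertex $v$ becomes the origin; $C_\downw(v,\sigma)$ becomes $(\rr_{\leq 0})^n$; and $S_{v,\sigma} = X_t \cap C_\downw(v,\sigma)$ becomes the non-degenerate $(n{-}1)$-simplex
\[
\Sigma_\sigma = \{(t_1,\ldots,t_n) \in (\rr_{\leq 0})^n : t_1 + \cdots + t_n = t - f(v)\},
\]
where non-degeneracy uses $t - f(v) < 0$. On the target side, $\sss(\sigma) \subseteq \Lk_X(v)$ is the simplicial link of the origin in $\rr^n$, i.e.\ the octahedral sphere on $\pm e_1,\ldots,\pm e_n$, and the descending piece $\sigma \subseteq \Lk^\downw_X(v)$ is the sub-simplex spanned by $-e_1,\ldots,-e_n$. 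Since the $n$-flat $C(v,\sigma)$ is a convex subspace of the CAT(0) complex $X$, every geodesic from $v$ into the sheet is the Euclidean straight line segment in $\rr^n$, so the CAT(0) retraction $r$ of Theorem~\ref{thmdefretr} restricts on $C(v,\sigma) - \{v\}$ to radial projection from the origin. Radial projection then gives a homeomorphism $\Sigma_\sigma \to \sigma$: each closed ray from the origin into $(\rr_{\leq 0})^n$ meets $\Sigma_\sigma$ and $\sigma$ each in exactly one point, and the continuous inverse sends $y \in \sigma$ to the unique intersection of its ray with the hyperplane $\sum t_i = t - f(v)$. This construction is manifestly compatible with face restrictions $\tau \subseteq \sigma$ (which become coordinate subspaces of $\rr^n$), so the gluing from the first paragraph applies.

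The main obstacle I anticipate is the bookkeeping inside the single-sheet step: identifying the abstract ``sphere of radius $1/4$'' used in Theorem~\ref{thmdefretr} with the simplicial link $\sss(\sigma)$ of the origin inside the sheet, in such a way that radial projection really lands in the correct simplex $\sigma \subseteq \Lk^\downw_X(v)$ rather than in some other face of $\Lk_X(v)$. This amounts to the canonical identification of corners of cubes at $v$ with simplices of the vertex link, but must be executed carefully in order for the radial-projection computation to be legitimate and globally consistent across the various sheets.
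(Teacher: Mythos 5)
Your argument is correct. In fact the paper offers no proof of Proposition~\ref{propshadow} at all: it is stated as immediate from the definition of the shadow via the chart $\phi$ and the description of the CAT(0) retraction in Theorem~\ref{thmdefretr}, so your sheet-by-sheet radial-projection computation is precisely the verification left to the reader, carried out in the natural way. The one point worth making explicit beyond what you wrote is injectivity of the glued map across \emph{distinct} sheets (agreement on overlaps gives well-definedness but not bijectivity onto $K$): if $x\in S_{v,\sigma}$ and $y\in S_{v,\tau}$ have the same image $z\in\sigma\cap\tau$, then each lies on the unique geodesic ray from $v$ with initial direction $z$, at the parameter forced by having height $t$, so $x=y$; this is the same uniqueness-of-geodesics fact that makes your single-sheet step legitimate.
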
 

\begin{proposition} 
Suppose that $t\notin \zz$ and that either $f(v)\in S$ and $K=L$ 
or $f(v)\notin S$ and $K=\widetilde{L}$.  Let $t\in [a,b]$, and  
let $X_{[a,b]}$ denote $f^{-1}([a,b])$.  
\begin{itemize}
\item If $f(v)\in [a,b]$ then $S_{v,K}$ is null-homotopic in
  $X_{[a,b]}$; 

\item If $f(v)\notin [a,b]$ then $S_{v,K}$ is a retract of
  $X_{[a,b]}$.  
\end{itemize} 
\end{proposition}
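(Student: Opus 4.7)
The plan is to treat the two bullets separately, using the cone parametrization of $C_\downw(v,K)$ introduced just before the statement. I will assume throughout that $f(v)>t$, the symmetric case $f(v)<t$ being obtained by swapping downward cones and descending links for their upward counterparts; thus $S_{v,K}=X_t\cap C_\downw(v,K)$ and $K$ is naturally identified with the full subcomplex $\Lk^\downw_X(v)\subseteq\Lk_X(v)$. Note that under this convention the second-bullet hypothesis $f(v)\notin[a,b]$ strengthens to $f(v)>b$, since $f(v)>t\geq a$.

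For the first bullet, assume $f(v)\in[a,b]$. The parametrization $\phi$ identifies each top-dimensional piece $C_\downw(v,\sigma)$ with $(\rr_{\leq 0})^n$, conjugating $f$ to the affine map $(t_1,\ldots,t_n)\mapsto f(v)+\sum_i t_i$. The straight-line contractions of these orthants to the origin agree on common faces and assemble into a continuous global contraction $H\colon C_\downw(v,K)\times[0,1]\to C_\downw(v,K)$ with terminal value $v$. Along each orbit of $H$ the height varies monotonically, so an orbit starting in $S_{v,K}\subseteq X_t$ only meets heights between $t$ and $f(v)$, both of which lie in $[a,b]$. Therefore $H$ restricted to $S_{v,K}\times[0,1]$ takes values in $X_{[a,b]}$, giving the desired null-homotopy.

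For the second bullet, with $f(v)>b$, the point $v$ is not in $X_{[a,b]}$, so the radial retraction $r\colon X-\{v\}\to\Lk_X(v)$ from Theorem~\ref{thmdefretr} restricts to $X_{[a,b]}$. Proposition~\ref{propretr} supplies a simplicial retraction $\pi\colon\Lk_X(v)\to K$ (explicitly, $w^\epsilon\mapsto w^-$), and Proposition~\ref{propshadow} gives a homeomorphism $r|_{S_{v,K}}\colon S_{v,K}\to K$, whose inverse I denote $\psi$. I propose the composite $\rho:=\psi\circ\pi\circ r\colon X_{[a,b]}\to S_{v,K}$ as the retraction. To verify $\rho|_{S_{v,K}}=\id$, I observe that for $x\in S_{v,K}$ the $\CAT(0)$-geodesic from $v$ to $x$ lies in a single convex cell $C_\downw(v,\sigma)$, whence $r(x)$ lies in $\sigma\subseteq K$, so $\pi$ fixes $r(x)$ and $\psi(r(x))=x$ by the choice of $\psi$. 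The main nuisance in writing this up will be pure bookkeeping: keeping track of upward versus downward cones and of the $f(v)\in S$ versus $f(v)\notin S$ distinction, and confirming in each sub-case that $K$ is indeed the correct half-link of $v$ for the retraction supplied by Proposition~\ref{propretr}.
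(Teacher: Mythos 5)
Your proposal is correct and follows essentially the same route as the paper: for $f(v)\in[a,b]$ the paper contracts $S_{v,K}$ by moving points at constant speed along the geodesics to $v$ (which, inside each sheet $C(v,\sigma)\cong\rr^n$, is exactly your straight-line contraction of the orthants), and for $f(v)\notin[a,b]$ it uses the composite $h^{-1}\circ q\circ r_v$, where $q$ folds $\Lk_X(v)$ onto $\Lk^\downw_X(v)$ by sending each vertex to the downward member of its opposite pair — precisely your $\psi\circ\pi\circ r$. Your explicit check that the contraction's heights stay in $[t,f(v)]\subseteq[a,b]$ is a detail the paper leaves implicit, but nothing differs in substance.
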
 

\begin{proof} 
We consider only the case $f(v)>t$, the contrary case being similar. 
In this case the retraction map $r_v:X-\{v\}\rightarrow \Lk_X(v)$
restricts to $S_{v,K}$ as a homeomorphism $h:S_{v,K}\rightarrow
\Lk^\downw_X(v)$.  Let $q:\Lk_X(v)\rightarrow \Lk^\downw_X(v)$ be 
the simplicial map sending each vertex to the downward member of 
its opposite pair.  If $f(v)\notin [a,b]$ then the composite 
\[h^{-1}\circ q\circ r_v:X_{[a,b]}\rightarrow S_{v,K}\subseteq 
X_{[a,b]}\] 
is the required retraction.  If on the other hand $f(v)\in [a,b]$, 
then by moving points of $S_{v,K}$ at constant speed along the 
geodesic joining them to $v$ we get a homotopy from the identity 
map of $S_{v,K}$ to the constant map with image $\{v\}$.  
\end{proof} 

\section{Brown's criterion applied to $X_L^{(S)}$} 
\label{secbrown}

To be able to apply Brown's criterion to decide whether $G_L(S)$ is 
type $FP(R)$ or type $FP_n(R)$, we will need to know something about
the finiteness properties of the cell stabilizers in $X_L^{(S)}$.  By 
Corollary~\ref{corstabs}, the only non-trivial cell stabilizers are 
the stabilizers of the vertices whose height is not in $S$.  The following 
proposition shows that each of the conditions stated to be equivalent
in Theorem~\ref{thmb} implies the required conditions on $\pi_1(L)$.  

\begin{proposition}\label{propbrownhyp} 
If $\widetilde{L}$ is $R$-acyclic, or if $G_L(\emptyset)$ is type
$FP(R)$, then $\pi_1(L)$ is type $FP(R)$. 

If $\widetilde{L}$ is $(n-1)$-R-acyclic 
or if $G_L(\emptyset)$ is 
type $FP_n(R)$, then $\pi_1(L)$ is type $FP_n(R)$.  
\end{proposition}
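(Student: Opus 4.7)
The hypothesis in each case is a disjunction, and I would treat the two disjuncts independently, since the arguments are quite different in flavor.

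For the hypothesis that $\widetilde{L}$ is $R$-acyclic (respectively $(n{-}1)$-$R$-acyclic), the plan is to read off the conclusion directly from the definitions of $FH(R)$ and $FH_n(R)$. Since $L$ is a finite CW-complex, its universal cover $\widetilde{L}$ is a free $\pi_1(L)$-CW-complex with exactly one $\pi_1(L)$-orbit of cells for each cell of $L$, hence with only finitely many orbits. The acyclicity assumption then exhibits $\widetilde{L}$ as the kind of complex that, by Definition~\ref{deffinone} (resp.~Definition~\ref{deffintwo}), witnesses $\pi_1(L)$ being type $FH(R)$ (resp.~$FH_n(R)$). The implications $FH(R)\Rightarrow FL(R)\Rightarrow FP(R)$ and $FH_n(R)\Rightarrow FL_n(R)\Rightarrow FP_n(R)$ recorded in Section~3 then give the desired finiteness of $\pi_1(L)$.

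For the hypothesis on $G_L(\emptyset)$, the plan is to use the isomorphism $G_L(\emptyset)\cong BB_{\widetilde{L}}\semi\pi_1(L)$ established in the corollary of Section~10 and then apply Proposition~\ref{propbieri}. The semidirect product structure provides a canonical split short exact sequence
\[
1\to BB_{\widetilde{L}}\to G_L(\emptyset)\to \pi_1(L)\to 1,
\]
so the projection onto $\pi_1(L)$ admits a section and $\pi_1(L)$ is a retract of $G_L(\emptyset)$. Proposition~\ref{propbieri} then transfers $FP(R)$ (resp.~$FP_n(R)$) from $G_L(\emptyset)$ to $\pi_1(L)$.

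There is no real obstacle here: the proposition is essentially a bookkeeping statement, packaging the two inputs (the classifying-space description of $\widetilde{L}$ and the semidirect-product description of $G_L(\emptyset)$) together with the finiteness machinery already set up. The only point to check carefully is that the $FH_n\Rightarrow FP_n$ implication does not require $L$ to be aspherical in any further sense, which is clear from the definitions.
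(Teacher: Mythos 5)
Your proposal is correct and follows essentially the same two-pronged argument as the paper: the free $\pi_1(L)$-action on $\widetilde{L}$ with finitely many orbits of cells gives $FH(R)$ (resp.\ $FH_n(R)$) by definition and hence $FP(R)$ (resp.\ $FP_n(R)$), while the semidirect product description $G_L(\emptyset)\cong BB_{\widetilde{L}}\semi\pi_1(L)$ exhibits $\pi_1(L)$ as a retract so that Proposition~\ref{propbieri} applies. No differences worth noting.
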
 

\begin{proof} 
$\pi_1(L)$ acts freely on $\widetilde{L}$ by deck transformations. 
Hence if $\widetilde{L}$ is $R$-acyclic 
(resp.~$(n-1)$-$R$-acyclic) 
then $\pi_1(L)$ is by definition $FH(R)$
(resp.~$FH_n(R)$), and hence $\pi_1(L)$ is $FP(R)$ (resp.~$FP_n(R)$).  

Since $G_L(\emptyset)\cong BB_{\widetilde{L}}\semi \pi_1(L)$, we see
that $\pi_1(L)$ is a retract of $G_L(\emptyset)$.  By
Proposition~\ref{propbieri} it follows that $\pi_1(L)$ is $FP(R)$ 
(resp.~$FP_n(R)$) whenever $G_L(\emptyset)$ is.  
\end{proof} 

\begin{theorem}\label{thmfpgeneral} 
Suppose that $\pi_1(L)$ is type $FP(R)$ (resp.~type $FP_n(R)$).  
Then $G_L(S)$ is type $FP(R)$ (resp.~type $FP_n(R)$) if and 
only if the set of heights of vertices whose ascending links 
are not $R$-acyclic 
(resp.~are not $(n-1)$-$R$-acyclic) 
is finite.  
\end{theorem}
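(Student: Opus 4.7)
The plan is to apply Brown's criterion (Theorem~\ref{brownscrit}) to the cellular action of $G_L(S)$ on the contractible CAT(0) cube complex $X = X_L^{(S)}$, filtered by the slabs $X(m) := (f^{(S)})^{-1}([-m, m])$. By Corollary~\ref{corstabs}, the only non-trivial cell stabilizers are copies of $\pi_1(L)$, which we have assumed is $FP(R)$ (resp.~$FP_n(R)$). The quotient $X/G_L(S) = X_L/BB_L$ is a locally finite, finite-dimensional complex whose height function restricts to a bijection from vertices to $\zz$, so each $X(m)$ will have only finitely many $G_L(S)$-orbits of cells. By Theorem~\ref{thmmorse} combined with Theorem~\ref{thmexess}, the passage from $X(m)$ to $X(m+1)$ is, up to homotopy, the coning off of a copy of $L$ or $\widetilde L$ (depending on whether the height lies in $S$) for each newly included vertex at height $\pm(m+1)$.

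For the sufficient direction, once the bad heights are contained in $[-m_0, m_0]$, every inclusion $X(m) \hookrightarrow X(m+1)$ with $m \geq m_0$ will be an $R$-homology isomorphism (resp.~an isomorphism in degrees $i \leq n-1$), because the attached cones are now over $R$-acyclic (resp.~$(n-1)$-$R$-acyclic) links. Combined with the contractibility of $X$, this forces $\overline{H}_i(X(m); R) = 0$ for $m \geq m_0$ in the relevant range, so the homology sequence is essentially trivial and Brown's criterion yields $FP(R)$ (resp.~$FP_n(R)$).

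For the converse I shall argue the contrapositive: if the bad-height set $B$ is infinite, Brown's criterion must fail in some relevant degree. By reflecting $f^{(S)}$ if necessary, I may assume $B$ contains infinitely many positive integers, and by passing to a subsequence that all such heights lie in $S$ or all lie outside $S$, so that their common link is a single $K \in \{L, \widetilde L\}$ with $\tilde H_{i_0}(K; R) \neq 0$ for some $i_0$ in the relevant range. Fix $m \geq 0$; for any $m' > m$, pick a bad vertex $v$ of the subsequence at height $h > m'$. By the proposition on shadows following Proposition~\ref{propshadow}, the shadow $S_{v, K}$ is a retract of $X(m)$ (since $h \notin [-m, m]$) and a retract of $X(m')$ (since $h \notin [-m', m']$), while by Proposition~\ref{propshadow} it is homeomorphic to $K$. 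A non-zero class $\beta \in \tilde H_{i_0}(K; R)$ therefore pulls back to a non-zero class $\alpha \in \tilde H_{i_0}(X(m); R)$, and the retraction of $X(m')$ onto the shadow sends the image of $\alpha$ back to $\beta$, witnessing that this image is non-zero. So for every $m' > m$ the map $\tilde H_{i_0}(X(m); R) \to \tilde H_{i_0}(X(m'); R)$ is non-zero; the sequence is not essentially trivial in degree $i_0$, and Brown's criterion denies $FP(R)$ (resp.~$FP_n(R)$) to $G_L(S)$.

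The main obstacle will be the converse direction. The point to beware of is that essential triviality demands a uniform $m'$ killing every class in $\overline{H}_i(X(m); R)$, whereas the class produced from any one bad vertex $v$ only survives up to $m' < h$; it is the ability, given $B$ infinite, to produce a fresh surviving class for every larger $m'$ that defeats essential triviality. The shadow/sheet construction of Section~13 is precisely engineered to deliver split injections on homology from the remote bad vertices, making this geometric input the linchpin of the argument.
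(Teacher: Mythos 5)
Your proof is correct and follows the paper's own argument: Brown's criterion applied to the slab filtration of $X_L^{(S)}$, with Corollary~\ref{corstabs} and the hypothesis on $\pi_1(L)$ handling the vertex stabilizers, the coning description of $X(m)\hookrightarrow X(m+1)$ giving the positive direction, and the shadow retractions $S_{v,K}$ supplying the surviving homology classes in the converse. The only divergence is in how the failure of essential triviality is phrased: you exhibit, for a fixed $m$ and \emph{every} $m'>m$, a class in $\overline{H}_{i_0}(X(m);R)$ with non-zero image in $\overline{H}_{i_0}(X(m');R)$ by choosing the bad vertex beyond height $m'$ so that its shadow is a retract of both slabs, whereas the paper's write-up emphasizes that the shadow class acquires a non-trivial kernel once the bad vertex is swallowed by $X(m')$ --- your formulation matches the quantifier structure of ``essentially trivial'' more directly, and the pitfall you flag at the end is exactly the right one.
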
 

\begin{proof} 
By Proposition~\ref{propbrownhyp}, the action of $G=G_L(S)$ on 
$X=X_L^{(S)}$ satisfies the hypotheses of Theorem~\ref{brownscrit}, 
our statement of Brown's criterion.  Now consider the filtration 
of $X$ by the $G$-subcomplexes 
$X(m)=X_{[-m-1/2,m+1/2]}=f^{-1}([-m+1/2,m+1/2])$ for $m\in \nn$.  $G$
acts cocompactly on each $X(m)$, and so $G$ is $FP(R)$ (resp.~$FP_n(R)$)
if and only if for each $i$ (resp.~for each $i<n$) the system
$\overline{H}_i(X(m);R)$ is essentially trivial.  

If the set of vertices whose ascending links are not $R$-acyclic
(resp.~are not $(n-1)$-$R$-acyclic) is finite, 
pick $m_0\in \nn$ so that $|f(v)|>m_0$ implies that $\Lk^\upw(v)\cong 
\Lk^\downw(v)$ is $R$-acyclic (resp.~$(n-1)$-$R$-acyclic).  Then for 
all $m'>m>m_0$ and for all $i$ (resp.~for all $i<n$)
$H_i(X(m'),X(m);R)=0$.  Hence in this case $G$ is $FP(R)$
(resp.~$FP_n(R)$) by Brown's criterion.  

Conversely, suppose that for each $m\in \nn$ there exists $m'>m+1$ and 
$v$ with $|f(v)|=m'$ such that $K=\Lk^\upw_X(v)$ is not
$(n-1)$-$R$-acyclic.  By Proposition~\ref{propshadow}, $S_{v,K}$ is 
a retract of $X(m)$ but is null-homotopic in $X(m')$.  Hence there 
exists $i$ (resp.~$i<n$) so that $\overline{H}_i(X(m);R)
\rightarrow \overline{H}_i(X(m');R)$ 
has non-trivial kernel, and so by Brown's criterion $G$ cannot be 
$FP(R)$ (resp.~$FP_n(R)$).  
\end{proof} 

\begin{corollary}\label{corfpr}
Suppose that $\pi_1(L)$ is type $FP(R)$ (resp.~type $FP_n(R)$).  

\begin{itemize} 
\item 
If $S$ is finite then 
$G_L(S)$ is $FP(R)$ (resp.~$FP_n(R)$) if and only if $\widetilde{L}$ 
is $R$-acyclic (resp.~$(n-1)$-$R$-acyclic).  

\item 
If $\zz-S$ is finite then $G_L(S)$ is $FP(R)$ (resp.~$FP_n(R)$) if and 
only if $L$ is $R$-acyclic (resp.~$(n-1)$-$R$-acyclic).  

\item
If $S$ and $\zz-S$ are both infinite, then $G_L(S)$ is $FP(R)$
(resp.~$FP_n(R)$) if and only if both $L$ and $\widetilde{L}$ are 
$R$-acyclic (resp.~$(n-1)$-$R$-acyclic).  
\end{itemize} 
\end{corollary}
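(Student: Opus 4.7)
The plan is to deduce Corollary \ref{corfpr} directly from Theorem \ref{thmfpgeneral}, using the explicit description of ascending links supplied by Theorem \ref{thmexess}. The latter tells us that for any vertex $v$ of $X_L^{(S)}$ the ascending link $\Lk^\upw(v)$ equals $L$ if $f^{(S)}(v)\in S$ and $\widetilde L$ otherwise. Since $f^{(S)}$ takes every integer value on vertices of $X_L^{(S)}$, and since all vertices at a given height have the same ascending link, the set $H$ of heights at which the ascending link fails to be $R$-acyclic (respectively, $(n-1)$-$R$-acyclic) is determined entirely by $S$ together with the acyclicity properties of $L$ and $\widetilde L$.

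First I would list the four possibilities for $H$. One has $H=\emptyset$ if both $L$ and $\widetilde L$ have the required acyclicity; $H=S$ if only $\widetilde L$ does; $H=\zz-S$ if only $L$ does; and $H=\zz$ if neither does. By Theorem \ref{thmfpgeneral}, $G_L(S)$ is of type $FP(R)$ (resp.~$FP_n(R)$) if and only if $H$ is finite. The hypothesis in the corollary that $\pi_1(L)$ is of type $FP(R)$ (resp.~$FP_n(R)$) is exactly the input needed to apply that theorem, so nothing further has to be checked.

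The three bullets are then immediate by inspection. If $S$ is finite, then $H$ is finite if and only if $H$ does not contain the infinite set $\zz-S$, i.e.~if and only if $\widetilde L$ is $R$-acyclic (resp.~$(n-1)$-$R$-acyclic). The case in which $\zz-S$ is finite is symmetric and gives the condition on $L$. If both $S$ and $\zz-S$ are infinite, then $H\in\{\emptyset,S,\zz-S,\zz\}$ is finite only when $H=\emptyset$, which forces both $L$ and $\widetilde L$ to have the required acyclicity; conversely if both are acyclic then $H=\emptyset$ is visibly finite.

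Since Theorem \ref{thmfpgeneral} has already done the geometric work and Theorem \ref{thmexess} has already pinned down the link types, no substantive obstacle remains: the entire content of the corollary is a tabulation of the cases of Theorem \ref{thmfpgeneral} according to whether $S$ or its complement in $\zz$ is finite. The only point to be mildly careful about is to note that each integer really occurs as the height of some vertex of $X_L^{(S)}$, so that the set $H$ is literally $\emptyset$, $S$, $\zz-S$, or $\zz$ and not a proper subset thereof; this is part of Theorem \ref{thmexess}.
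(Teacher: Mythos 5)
Your proposal is correct and follows exactly the route the paper intends: the corollary is stated as an immediate consequence of Theorem~\ref{thmfpgeneral}, and your tabulation of the set of bad heights as $\emptyset$, $S$, $\zz-S$, or $\zz$ (using the link description from Theorem~\ref{thmexess} and the fact that every integer is the height of some vertex) is precisely the case analysis required. Nothing is missing.
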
 

This corollary together with Corollary~\ref{cormorse} completes the
proof of Theorem~\ref{thmb}.  There are some cases with either $S$ or $\zz-S$ 
finite in which the above corollary shows that $G_L(S)$ is $FP(R)$ 
and yet Corollary~\ref{cormorse} does not apply.  For the cases 
when $S$ is finite we will show that $G_L(S)$ is in fact $FH(R)$ 
in Proposition~\ref{propsfinfhr}.

\section{Presentations for $G_L(S)$} 

In this section we study presentations for the groups $G_L(S)$.  
Since we want to realize the surjective group homomorphism
$G_L(S)\rightarrow G_L(T)$ by a map of presentations, it is 
convenient to fix a finite generating set, and for this reason
we focus on the case when $0\in S$.  

\begin{theorem}\label{thmpres} 
For any $S$ with $0\in S$ and any collection $\Gamma$ of directed
loops in $L$ whose normal closure generates $\pi_1(L)$, the 
presentation $P_L(S,\Gamma)$ is a presentation of the group
$G_L(S)$.  If $S\subseteq T$ then the pair $P_L(S,\Gamma)\subseteq 
P_L(T,\Gamma)$ realizes the surjection $G_L(S)\rightarrow G_L(T)$.
\end{theorem}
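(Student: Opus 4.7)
The approach has two stages. First I would establish the base case $S=\{0\}$, where $P_L(\{0\},\Gamma)$ contains only edge and triangle relations (no long cycle relations, since $S-\{0\}=\emptyset$, so the presentation does not depend on $\Gamma$). Then for general $S\ni 0$, I would identify the long cycle elements as a normal generating set for the kernel of the canonical surjection $G_L(\{0\})\twoheadrightarrow G_L(S)$. Functoriality for $S\subseteq T$ will be immediate from the construction, since the generators coincide and the long cycle relations of $P_L(S,\Gamma)$ form a subset of those of $P_L(T,\Gamma)$.

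\textbf{Base case.} By Corollary \ref{corgzero}, $G_L(\{0\})\cong\pi_1(X_0/BB_L)$, which I would compute from an explicit CW structure on $X_0/BB_L$ inherited from the cube-complex structure of $X_L/BB_L$: the level-$0$ slice of an $n$-cube with base at integer height $k$ is a non-trivial polytope precisely for $-n\leq k\leq 0$, of dimension $n-1$ for the intermediate values. Counting cells, $X_0/BB_L$ has a unique vertex (the height-$0$ vertex); one $1$-cell per unordered edge of $L$ (from the level-$0$ slice of the $2$-cube of that type with base height $-1$), which after picking orientations yields generators in bijection with directed edges subject to $a\bar a=1$; and triangular $2$-cells from the level-$0$ slices of each $3$-cube at base heights $-1$ and $-2$, giving both the cyclic order $abc=1$ and the reverse $a^{-1}b^{-1}c^{-1}=1$ for each triangle of $L$. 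Higher-dimensional cube slices contribute only higher cells, which do not affect $\pi_1$. Reading the standard presentation from this $2$-skeleton yields exactly $P_L(\{0\},\Gamma)$.

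\textbf{General case.} Corollary \ref{brancov} shows that $X_0^{(\{0\})}\to X_0^{(S)}$ is an unbranched regular cover (no branching at height $0$, since $0\notin S-\{0\}$), and the proposition establishing simply-connectivity of $X_0^{(\{0\})}$ identifies it as the universal cover. Hence $\pi_1(X_0^{(S)})=\ker(G_L(\{0\})\twoheadrightarrow G_L(S))$, and the task reduces to showing this kernel is normally generated (in $G_L(\{0\})$) by the long cycle elements. I would obtain this via Morse theory on the CAT(0), hence simply-connected, complex $X_L^{(S)}$: by Theorem \ref{thmmorse}, $X_L^{(S)}$ is built from $X_0^{(S)}$ by attaching cones on ascending/descending links at each vertex of non-zero height. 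At vertices of height $n\notin S$ the links are copies of $\widetilde L$, simply-connected and contributing no $\pi_1$-relations; at vertices of height $n\in S-\{0\}$ the links are copies of $L$, and coning off kills the image of $\pi_1(L)$ in $\pi_1(X_0^{(S)})$. Since $\pi_1(X_L^{(S)})=1$, these images normally generate the kernel. A standard argument (the smallest $G_L(\{0\})$-normal subgroup of $G_L(\{0\})$ containing the images sits between $\pi_1(X_0^{(S)})$ and itself) then upgrades normal generation inside $\pi_1(X_0^{(S)})$ to normal generation inside $G_L(\{0\})$.

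\textbf{Key identification and main obstacle.} The technical heart of the argument is matching the geometric meridian of a loop $\gamma=(a_1,\ldots,a_l)\in\Gamma$ at a height-$n$ vertex $v$ with the algebraic word $a_1^n\cdots a_l^n\in G_L(\{0\})$. For this I would use the shadow construction of Proposition \ref{propshadow}: the shadow $S_{v,L}\subset X_0$ is homeomorphic to $L$, but its intrinsic cell structure from the ambient cube complex subdivides each $L$-edge into a chain of $n$ cube-structure edges in $X_0$, coming from the $n$ distinct squares of the sheet $C_\downw(v,\{u,u'\})$ that meet level $0$. A direct computation in sheet coordinates shows that all $n$ of these squares have the same base height (namely $-1$ after the Morse function is normalized), so they project to the single $\{u,u'\}$-type square of $X_0/BB_L$ at base height $-1$; equivalently, the $n$ edges project to a single edge $a=(u,u')$ traversed $n$ times. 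Concatenating over $\gamma$ gives the projected shadow loop as $a_1^n a_2^n\cdots a_l^n\in\pi_1(X_0/BB_L)=G_L(\{0\})$. The main obstacle is making this identification precise with correct orientation conventions, and verifying that the resulting element (which in principle depends on the choice of a path from basepoint to $v$, but only up to conjugation) is independent of the remaining choices at the level of normal closures.
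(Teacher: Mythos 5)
Your proposal is correct and follows essentially the same route as the paper: the identical cell-by-cell computation of $\pi_1(X_0/BB_L)$ for the base case, the same Morse-theoretic coning of ascending and descending links (simply-connected $\widetilde L$ contributing no relations, copies of $L$ contributing the long cycle relations via $\Gamma$), and the same sheet computation --- your ``key identification'' is precisely the paper's Lemma~\ref{leminshadow}. The only cosmetic difference is that you run the Morse argument upstairs in $X^{(S)}_L$ and descend via covering-space theory applied to $X_0^{(\{0\})}\rightarrow X_0^{(S)}$, whereas the paper tracks the fundamental groups of the quotients $X(m)/G_L(S)$ directly.
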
 

\begin{remark} 
The case $S=\{0\}$ of this statement first appeared in~\cite{howie}
and was stated in~\cite{DL}.  A proof of the case $S=\zz$ without 
using the cube complex $\ttt_L$ was given in~\cite{DL}.  
\end{remark}

\begin{proof}  
We start with the case $S=\{0\}$.  By Corollary~\ref{corgzero},
$G_L(\{0\})$ may be viewed as the fundamental group of $X_0/BB_L$.  
The presentation $P_L(\{0\},\Gamma)$, which does not depend on 
$\Gamma$, arises from the 2-skeleton of the natural
polyhedral cell complex structure on $X_0/BB_L$.  Cells of the 
polyhedral structure on $X_0$ are intersections $C\cap X_0$, 
where $C$ is a cube of $X$.  The vertices of $X_0$ form a  
single $G$-orbit.  Edges meet $X_0$ only at their end points, 
and for each $n\geq 2$, each $A_L$-orbit of $n$-cubes of $X$ 
contributes $(n-1)$ distinct $G$-orbits of $(n-1)$-cells of 
$X_0$, corresponding to the distinct integer heights at which 
members of the orbit can appear.  Each $A$-orbit of squares 
contributes one $G$-orbit of edges, each $A$-orbit of 
3-cubes contributes two $G$-orbits of triangles, and each
$A$-orbit of 4-cubes contributes two $G$-orbits of tetrahedra
plus one $G$-orbit of octahedra.  
The 1-skeleton of $X_0/BB_L$ is a rose consisting of one vertex
and edges in bijective correspondence with the edges of $L$, 
or equivalently the squares of $\ttt_L$.  If $x$~and~$y$ are 
commuting Artin generators, then the two directions along the 
corresponding edge represent $xy^{-1}$ and $yx^{-1}$.  The
2-skeleton is formed by attaching two triangles for each triangle 
of $L$; if $(a,b,c)$ is the edge loop around a directed triangle 
of $L$ then the attaching maps are $abc$ and $a^{-1}b^{-1}c^{-1}$, 
giving the claimed presentation.  
\end{proof} 

Before proving the general case, we record a useful lemma.  
For each $S$ with $0\in S$, the edges of $X_0^{(S)}/G_L(S)$ are 
naturally bijective with the edges of $L$.  Hence given any 
initial vertex $w$ in $X_0^{(S)}$, any word in the directed 
edges of $L$ describes a unique directed edge path starting at $w$.  

\begin{lemma}\label{leminshadow} 
Let $(a_1,\ldots,a_l)$ be any directed edge path in $L$.  
For any $S$ with $0\in S$, for $n\in \zz-\{0\}$ and 
any initial vertex in $X^{(S)}_0$, the 
directed edge path defined by the word $a_1^na_2^n\cdots a_l^n$ is
contained in the shadow of a vertex $v$ of height~$n$, and is equal 
to the image of the path $(a_1,\ldots,a_l)$ under the 
isomorphism $L\rightarrow S_{v,L}$.  
\end{lemma}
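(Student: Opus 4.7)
The plan is to induct on $l$, viewing each block $a_i^n$ as traversing a 2-sheet sharing a common apex $v$ at height $n$. Given the initial vertex $w\in X_0^{(S)}$ and the starting vertex $v_0$ of $a_1=(v_0,v_1)$, I would define $v$ to be the unique vertex at height $n$ on the 1-sheet through $w$ in direction $v_0$. This $v$ serves as the apex throughout; crucially it depends only on $w$ and $v_0$, not on the rest of the sequence $(a_1,\ldots,a_l)$.

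For the base case $l=1$, I would parameterize the 2-sheet $C(v,\{v_0,v_1\})$ by $\phi\colon\rr^2\to C(v,\{v_0,v_1\})$ with $\phi(0,0)=v$, coordinates in directions $v_0^+$ and $v_1^+$, so that $f^{(S)}(\phi(s_1,s_2))=n+s_1+s_2$. Assuming $n>0$ (the case $n<0$ is symmetric), the construction of $v$ forces $w=\phi(-n,0)$, and the downward quadrant $C_\downw(v,\{v_0,v_1\})$ meets $X_0^{(S)}$ in the segment from $\phi(-n,0)$ to $\phi(0,-n)$ consisting of $n$ consecutive edges of $X_0^{(S)}$. Under the orientation convention fixed in the proof of Theorem~\ref{thmpres}, each of these edges is the directed edge $(v_0,v_1)$, so $a_1^n$ based at $w$ traces exactly $S_{v,\{v_0,v_1\}}$, which is the image of the edge $\{v_0,v_1\}\subseteq L$ under the homeomorphism $L\to S_{v,L}$ of Proposition~\ref{propshadow}. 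For the inductive step, applying the inductive hypothesis to $(a_1,\ldots,a_{l-1})$ places us at the image of $v_{l-1}\in L$ in $S_{v,L}$, which is the unique vertex of the 1-shadow $S_{v,\{v_{l-1}\}}$, namely the vertex at distance $n$ from $v$ in direction $v_{l-1}^-$. The identical 2-sheet calculation, now applied to $C(v,\{v_{l-1},v_l\})$, shows that the block $a_l^n$ traces $S_{v,\{v_{l-1},v_l\}}$ and terminates at the image of $v_l$. Concatenating the blocks yields the image of $(a_1,\ldots,a_l)$ under $L\to S_{v,L}$.

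The point making the induction close is that a single $v$ serves the whole path: each successive 2-sheet $C(v,\{v_{i-1},v_i\})$ is automatically anchored at $v$ because its downward 1-sheet in direction $v_{i-1}$ passes through the endpoint of the preceding block, which by the inductive hypothesis is exactly the image of $v_{i-1}$. The main thing needing care is the orientation bookkeeping---checking that in the parameterization above the directed edge $(v_{i-1},v_i)$ of $L$ truly points from $\phi(-n,0)$ toward $\phi(0,-n)$ inside $X_0^{(S)}$---but this is exactly the convention already fixed when the 1-skeleton of $X_0/BB_L$ was identified with directed edges of $L$ in Theorem~\ref{thmpres}.
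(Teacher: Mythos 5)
Your proof is correct and follows essentially the same route as the paper's: both arguments reduce to the $n\times n$ square inside the 2-sheet determined by each edge $a_i$ (with corners at heights $0,n,0,-n$ and the word $a_i^n$ along the height-$0$ diagonal), and both close the argument by observing that consecutive squares share the apex at height $n$ because that apex is reached from the shared height-$0$ corner by ascending along the common vertex $v_i$. The paper phrases this as a check on consecutive pairs rather than a formal induction on $l$, but the content is identical, including the deferral of the orientation convention to the cell structure on $X_0/BB_L$ described in Theorem~\ref{thmpres}.
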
 

\begin{proof} 
First consider the case when $l=1$, and suppose that $n>0$.  If $a$ is
the directed edge in $L$ from vertex $x$ to vertex $y$, then for any
vertex $\scw\in X^{(S)}_0 \subseteq X=X_L^{(S)}$ there is an $n\times
n$ square with vertices $\scw,\scn,\sce,\scs$ of heights $0,n,0,-n$
respectively such that the height~0 diagonal from $\scw$ to $\sce$
represents the word $a^n$, while the boundary paths from $\scw$ to
$\scn$ and from $\scs$ to $\sce$ represent the word $x^n$ and the
boundary paths from $\sce$ to $\scn$ and from $\scs$ to $\scw$
represent the word $y^n$.  The square is of course contained in the
2-sheet through $\scw$ defined by the edge $a$, and is uniquely
determined by $a$, $n$ and any one of its vertices.  The edge paths
$a^n$ and $a^{-n}$ are each the image of a single edge under the
shadow map from either $\scn$ or $\scs$.  This proves the claim 
whenever $l=1$.  

For the general case, fix $n>0$ and fix $i$ with $1\leq
i<l$, let $a=a_i$ and $b=a_{i+1}$, where $a$ is the directed edge from 
vertex $x$ to vertex $y$ and $b$ is the directed edge from vertex 
$y$ to vertex $z$.  As before there are $n\times n$ squares with 
vertices $\scw,\scn,\sce,\scs$ and $\scw',\scn',\sce',\scs'$ such 
that the respective height~0 diagonals represent the words $a^n$ and 
$b^n$.  If $\sce=\scw'$, so that the concatenated path from $\scw$ to 
$\sce'$ represents $a^nb^n$, then we see that $\scn=\scn'$, since both
are vertices of $X$ that can be reached from $\sce=\scw'$ by moving 
along the path $y^n$.  Similarly, if $\sce'=\scw$ so that the 
concatenated path from $\sce$ to $\scw'$ represents $a^{-n}b^{-n}$, 
then $\scs=\scs'$.  Thus the word $a_1^n\cdots a_l^n$ is the image  
of $(a_1,\ldots,a_l)$ under the shadow map from $\scn$ and 
$a_1^{-n}\cdots a_l^{-n}$ is the image of the same path under the 
shadow map from $\scs$.  Since $\scn$ has height $n$ and $\scs$ 
has height $-n$, the claim follows.  
\end{proof} 

\begin{proof} (Theorem~\ref{thmpres}, general case) 
For general $S$ with $0\in S$, we use Morse theory to compare
$G_L(\{0\})=\pi_1(X^{(S)}_0/G_L(S))$ with $G_L(S)=
\pi_1(X^{(S)}/G_L(S))$.  Since $X_0^{(S)}$ is a regular covering space
of $X_0/BB_L$ (Corollary~\ref{corregcov}), we may view words in the
directed edges of $L$ as defining edge paths in $X^{(S)}_0$, and a
word represents the identity in $G_L(S)$ if and only if the
corresponding path in $X^{(S)}_0$ is a closed loop.  We build
$X^{(S)}$ as the direct limit of the $G_L(S)$-subspaces
$X(m)=X^{(S)}_{[-m-1/2,m+1/2]}$ for $m\in \nn$.  Up to homotopy, $X(m+1)$ is
obtained from $X(m)$ by attaching cones to the shadows of vertices of
height $m+1$ in $X_m\subseteq X(m)$ and attaching cones to the shadows
of vertices of height $-(m+1)$ in $X_{-m}\subseteq X(m)$.  Up to 
homotopy, this is the same as attaching cones to the shadows of 
these vertices in $X_0$.  If vertices
of height $m$ (resp.~height $-m$) have link $\sss(\widetilde{L})$, then 
since $\widetilde{L}$ is simply connected, the fundamental group is 
left unchanged.  If the vertices of a given height have link $\sss(L)$, 
then attaching a cone to the shadow in $X_0$ of the vertices of that 
height has the same effect on the fundamental group as attaching 
discs to the images under the shadow map of all edge loops in 
$\Gamma$.  Hence we see that the fundamental group of $X(m)/G_L(S)$ 
has presentation $P_L(S\cap [-m,m],\Gamma)$, and hence that 
$P_L(S,\Gamma)$ is a presentation for $G_L(S)=\pi_1(X/G_L(S))$.  
\end{proof}  

\begin{lemma}\label{lemmarels}
Suppose that $0\in S$ and that $\gamma=a_1,\ldots,a_l$ is a
homotopically non-trivial edge loop in $L$.  The relation 
$a_1^na_2^n\cdots a_l^n=1$ holds in $G_L(S)$ if and only if 
$n\in S$.  
\end{lemma}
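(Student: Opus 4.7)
The strategy is to convert the question into a topological one using the identification from the proof of Theorem~\ref{thmpres}: since $X_0^{(S)}\rightarrow X_0/BB_L$ is a regular covering with deck group $G_L(S)$ (Corollary~\ref{corregcov}), a word in the directed edges of $L$ represents the identity in $G_L(S)$ if and only if the corresponding edge path, starting at any fixed base vertex $w$ of $X_0^{(S)}$, is a closed loop. Thus the task becomes understanding exactly what path the word $a_1^n a_2^n \cdots a_l^n$ traces in $X_0^{(S)}$ starting from $w$.

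For the ``if'' direction, suppose $n\in S$. The case $n=0$ is immediate. For $n\in S-\{0\}$, Lemma~\ref{leminshadow} applies directly: the path $a_1^n a_2^n \cdots a_l^n$ from $w$ lies in the shadow of a single vertex $v$ of height $n$, and equals the image of $(a_1,\ldots, a_l)$ under the isomorphism $L\rightarrow S_{v, L}$ of Proposition~\ref{propshadow}. Because $(a_1,\ldots, a_l)$ is a closed loop in $L$, its image is a closed loop in $S_{v, L}$, so the path closes up in $X_0^{(S)}$ and the word represents the identity in $G_L(S)$.

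For the ``only if'' direction, suppose $n\notin S$. First I would extend the construction of Lemma~\ref{leminshadow} to this branched setting: the $n\times n$ sheet-squares determined by each directed edge $a_i$ still exist in $X_L^{(S)}$, and the gluing argument (that consecutive squares share a common ``top'' vertex via uniqueness of the paths labelled $y^n$) goes through unchanged, so the path $a_1^n a_2^n \cdots a_l^n$ from $w$ still passes through a single vertex $v$ of height $n$. The difference is that $v$ now has link $\sss(\widetilde{L})$ rather than $\sss(L)$, and the path lies in the shadow $S_{v,\widetilde{L}}$, which by Proposition~\ref{propshadow} is homeomorphic to $\widetilde L$ via the retraction $r_v$. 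The key claim, and the main obstacle, is to verify that under $r_v$ the path $a_1^n \cdots a_l^n$ is carried to a lift of $(a_1,\ldots,a_l)$ to $\widetilde L$: each edge of the shadow arises from a 2-sheet through $v$, and the corresponding edge of $\widetilde L$ (sitting inside the link $\sss(\widetilde L)$ at $v$) projects under the universal covering $\widetilde L \rightarrow L$ to the edge $a_i$ of $L$. Once this identification is in hand, the conclusion is immediate: since $\gamma$ is homotopically non-trivial in $L$, any lift of $\gamma$ to $\widetilde L$ fails to close up, so the path in $X_0^{(S)}$ does not return to $w$, and $a_1^n a_2^n \cdots a_l^n$ is non-trivial in $G_L(S)$.
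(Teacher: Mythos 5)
Your proof is correct and follows essentially the same route as the paper: both reduce triviality of the word to closedness of the corresponding edge path in $X_0^{(S)}$, invoke Lemma~\ref{leminshadow} to identify that path with the image of $(a_1,\ldots,a_l)$ under the shadow map from a vertex $v$ of height $n$, and then use the dichotomy $\Lk^{\downw}(v)\cong L$ versus $\widetilde{L}$ according to whether $n\in S$. The only difference is presentational: the paper reads Lemma~\ref{leminshadow} as already applying when the link of $v$ is $\sss(\widetilde{L})$ (its proof never uses $f(v)\in S$), whereas you re-verify that the sheet-square gluing argument carries over to that case.
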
 

\begin{proof} 
Since $0\in S$ the case when $n=0$ is trivial.  If $n\neq 0$, 
the relation holds if and only if any edge path in $X_0^{(S)}$
described by the word $a_1^na_2^n\cdots a_l^n$ is a closed loop.  
By Lemma~\ref{leminshadow}, 
any such edge path is the image, under the shadow map from a 
vertex $v$ of height $n$, of the edge path $(a_1,\ldots, a_l)$ in 
either the ascending or descending link of $v$ (depending whether 
$n$ is less than or greater than 0).  If $f(v)\in S$ then this 
edge path is a closed loop in a complex isomorphic to $L$.  If on 
the other hand $f(v)\notin S$ then this edge path lies in a 
complex isomorphic to $\widetilde{L}$ and is not closed.  The 
claim follows since the shadow maps are homeomorphisms.  
\end{proof} 

\begin{corollary} 
$G_L(S)$ is finitely presentable if and only if $S$ is finite. 
\end{corollary}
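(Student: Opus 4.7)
The plan is to combine the presentation theorem (Theorem~\ref{thmpres}) with Lemma~\ref{lemmarels} and a standard fact from combinatorial group theory. Throughout I assume, in line with Theorem~B and the running setup of this section, that $L$ is not simply-connected, so that $\Gamma$ may be taken to contain a homotopically non-trivial directed loop.

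For the forward implication, suppose $S$ is finite. Using Corollary~\ref{brancov} (or rather the translation-invariance of Corollary~11.2) I may assume $0\in S$, since translating $S$ preserves both finiteness and the isomorphism type of $G_L(S)$. Fixing any finite $\Gamma$ that normally generates $\pi_1(L)$, Theorem~\ref{thmpres} gives $G_L(S)$ the presentation $P_L(S,\Gamma)$, whose generating set (directed edges of $L$) is finite and whose relator set consists of finitely many edge and triangle relations plus finitely many long-cycle relations (one per pair in $(S-\{0\})\times \Gamma$, with both factors finite). So the presentation is finite. The corner case $S=\emptyset$ is handled separately via $G_L(\emptyset)\cong BB_{\widetilde L}\semi\pi_1(L)$, using that this semidirect product inherits a finite presentation from the finite $2$-skeleton description (or by observing that $G_L(\emptyset)\twoheadrightarrow G_L(\{0\})$ has trivially-normally-generated kernel coming from the finite list of long-cycle relations for $n=0$, which are trivial).

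For the converse, assume $G_L(S)$ is finitely presentable and, after translation, that $0\in S$. Theorem~\ref{thmpres} presents $G_L(S)$ as $P_L(S,\Gamma)$ on the \emph{finite} generating set of directed edges of $L$. I invoke the standard Tietze-transformation fact: if a finitely presentable group is given by a presentation $\langle X\mid R\rangle$ with $X$ finite, then some finite subset $R_0\subseteq R$ already presents the group. Applying this, only finitely many of the long-cycle relations are needed, so there is a finite subset $T\subseteq S$ with $0\in T$ such that $P_L(T,\Gamma)$ presents $G_L(S)$. Since Theorem~\ref{thmpres} also says that $P_L(T,\Gamma)$ presents $G_L(T)$, and that the inclusion of presentations realises the canonical surjection, the natural map $G_L(T)\twoheadrightarrow G_L(S)$ must be an isomorphism.

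Now suppose for contradiction that $S$ is infinite and pick $n\in S-T$. Because $L$ is not simply-connected, some $\gamma=(a_1,\dots,a_l)\in \Gamma$ is homotopically non-trivial in $L$. By Lemma~\ref{lemmarels}, the element $w=a_1^n a_2^n\cdots a_l^n$ represents the identity in $G_L(S)$ (as $n\in S$) but a non-trivial element of $G_L(T)$ (as $n\notin T$). Hence $w$ lies in the non-trivial kernel of $G_L(T)\twoheadrightarrow G_L(S)$, contradicting that this map is an isomorphism. The main technical input is the group-theoretic lemma reducing to a finite subset of relators, which is standard; the real punchline is Lemma~\ref{lemmarels}, which already detects the membership of $n$ in $S$ from the group alone, so that enlarging $S$ necessarily enlarges the set of vanishing relations and forces $G_L(S)$ to grow beyond any finitely presented quotient.
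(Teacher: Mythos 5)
Your proof is correct and is essentially the paper's own argument: a finite $S$ gives a finite presentation $P_L(S,\Gamma)$ directly, and conversely the standard fact that a finitely presented group on a finite generating set needs only finitely many of its relators yields a finite $T\subseteq S$ with $P_L(T,\Gamma)$ presenting $G_L(S)$, which Lemma~\ref{lemmarels} rules out unless $T=S$. Your extra care with translation, the case $S=\emptyset$, and the standing assumption that $L$ is not simply-connected only makes explicit what the paper leaves implicit.
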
 

\begin{proof}  
If $S$ is finite, then for any finite set of edge loops 
$\Gamma$ that normally generates $\pi_1(L)$, $P_L(S,\Gamma)$ is a 
finite presentation of $G_L(S)$.  For the converse, fix a choice of 
$\Gamma$, and consider the presentation $P_L(S,\Gamma)$.  If $G_L(S)$ 
has a finite presentation then these finitely many relators are 
consequences of finitely many of the relators in the presentation 
$P_L(S,\Gamma)$.  It follows that when $G_L(S)$ is finitely
presentable, then there must be a finite subset $S'\subseteq S$ 
so that $P_L(S',\Gamma)$ also presents $G_L(S)$.  By
Lemma~\ref{lemmarels} this cannot happen unless $S'=S$.  
\end{proof} 

If $T$ is non-empty but does not contain $0$, then $G_L(T)$ is
isomorphic to $G_L(S)$ where $S$ is a translate of $T$ that contains
zero.  Thus we obtain a presentation for $G_L(T)$ whenever $T$ is 
non-empty, albeit with a non-canonical choice of generating set. We 
finish this section by giving a finite presentation for
$G_L(\emptyset)$.  

Since $\widetilde{L}$ is simply-connected, the group 
$G_{\widetilde{L}}(S)$ is isomorphic to $BB_{\widetilde{L}}$ for each
$S\subseteq \zz$.  Hence Theorem~\ref{thmpres} gives a 
presentation for $BB_{\widetilde{L}}$: the generators are the
directed edges of $\widetilde{L}$, subject only to the edge 
relations and the pairs of triangle relations coming from 
the triangles of $\widetilde{L}$.  Of course, this presentation
$P_{\widetilde{L}}(\{0\},\emptyset)$ is infinite whenever $\pi_1(L)$ 
is, but the generators and relators are both permuted freely by 
$\pi_1(L)$ and lie in finitely many orbits.  From this we obtain 
a finite presentation of the group
$G_L(\emptyset)=BB_{\widetilde{L}}\semi\pi_1(L)$.  Choose a finite 
presentation for $\pi_1(L)$, and choose a finite fundamental 
domain $K$ in $\widetilde{L}$ for the action of $\pi_1(L)$, 
i.e., a finite subcomplex $K$ that contains at least one 
simplex from each $\pi_1(L)$-orbit.  The generators in the presentation 
are the generators of the given presentation for $\pi_1(L)$, together
with the directed edges of $K$, subject to four types of relation: 
the finite set of relators coming from the given presentation for 
$\pi_1(L)$; the edge relators for each of the finitely many edges 
in $K$; the pair of triangle relators for each of the finitely 
many triangles in $K$; conjugacy relations: whenever $a$ is a 
directed edge of $K$ and $g\in \pi_1(L)-\{1\}$ is an element
such that the image of $a$ under $g$ is another edge $b$ of $K$, 
the relation $gag^{-1}=b$.  

The map $G_L(\emptyset)\rightarrow G_L(\{0\})$ is easily described in
terms of this presentation: the elements of $\pi_1(L)$ are sent to the 
identity and the directed edges of $K\subseteq \widetilde{L}$ are sent to the 
corresponding directed edges of $L$.  

\section{A set-valued invariant}

\begin{definition} 
For a group $G$ and a finite sequence $\ug=(g_1,\ldots, g_l)$ of elements of 
$G$, define a set $\cR(G,\ug)$ with $\{0\}\subseteq \cR(G,\ug)\subseteq \zz$ 
by 
\[ \cR(G,\ug) = \{n\in \zz\colon g_1^ng_2^n\cdots g_l^n = 1\}.\]
\end{definition} 

\begin{proposition} \label{proprgprops} 
Let $\ug=(g_1,\dots,g_l)$ be a sequence of elements of a group $G$.  
\begin{enumerate} 
\item
If $G\leq H$, then $\cR(G,\ug)=\cR(H,\ug)$;    
\item 
If $G$ is finitely presented then the set $\cR(G,\ug)$ is 
recursively enumerable; 
\item 
For any fixed isomorphism type of 
countable group $G$, the invariant $\cR(G,\ug)$ 
can take at most countably many distinct values.  
\end{enumerate} 
\end{proposition}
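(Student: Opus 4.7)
The plan is to prove each of the three parts directly; none of them requires a substantial new idea, so this is mostly a matter of laying out the observations in the right order.

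For part (1), the key point is simply that a subgroup inclusion preserves the identity element and the multiplication. Thus the element $g_1^ng_2^n\cdots g_l^n$, viewed in $G$, coincides with the same element viewed in $H$, and it equals $1_G$ in $G$ if and only if it equals $1_H$ in $H$. Hence the two sets $\cR(G,\ug)$ and $\cR(H,\ug)$ are literally defined by the same condition, and part (1) is immediate.

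For part (2), fix a finite presentation $G=\langle X\mid R\rangle$ and express each $g_i$ as a word $w_i$ in $X^{\pm 1}$. Then $n\in \cR(G,\ug)$ if and only if the word $W_n:=w_1^nw_2^n\cdots w_l^n$ lies in the normal closure $N$ of $R$ in the free group $F(X)$. The set $N$ is recursively enumerable: one systematically enumerates finite products of conjugates of elements of $R\cup R^{-1}$ and freely reduces. So one can enumerate pairs $(n,\pi)$, where $\pi$ is a prospective derivation produced by this enumeration, and output $n$ each time $\pi$ witnesses $W_n\in N$. This yields an r.e.\ enumeration of $\cR(G,\ug)$.

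For part (3), fix a countable representative $G$ of the given isomorphism type. The set of finite sequences of elements of $G$ is $\bigcup_{l\geq 0}G^l$, a countable union of countable sets and hence countable. The invariant $\ug\mapsto \cR(G,\ug)$ is a function from this countable set to subsets of $\zz$, so it takes at most countably many distinct values. No step here presents a serious obstacle; part (2) is the most delicate, since one must translate the group-theoretic equation into a statement about the normal closure of $R$ in $F(X)$ and invoke the standard fact that the word problem in a finitely presented group is r.e., but this is routine.
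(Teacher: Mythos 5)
Your proposal is correct and follows essentially the same route as the paper: part (1) is the observation that equations are preserved under subgroup inclusion, part (2) enumerates products of conjugates of relators in the free group (exactly the paper's lists $\cL(N)$) and checks $w_1^n\cdots w_l^n$ against them, and part (3) counts the countably many finite sequences in a countable group. The only cosmetic difference is that the paper also records explicitly that $\cR(H,\phi(\ug))=\cR(G,\ug)$ for an isomorphism $\phi$, which your choice of a fixed representative handles implicitly.
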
 

\begin{proof} 
If $G\leq H$ then any equation between elements of $G$ holds in $G$ 
if and only if it holds in $H$.

Suppose we are given a finite presentation of the 
group $G$, consisting of a set $x_1,\ldots, x_d$ of generators, 
together with a set $r_1,\ldots,r_m$ of relators, given as words 
in $x_i^{\pm 1}$.  Suppose also that we are given words $w_1,\ldots,w_l$ 
in $x_i^{\pm 1}$ such that $g_i=w_i$.  We work in the free group 
freely generated by $x_1,\ldots, x_d$, where there is an easy algorithm 
to replace a word by a reduced word.  Now for each fixed $N>0$,  
construct the list $\cL(N)$ consisting of the reductions of words 
obtained as a product of at most $N$ words of 
the form $wr_i^{\pm 1}w^{-1}$, where $w$ denotes a word in 
$x_1^{\pm 1},\ldots, x_d^{\pm 1}$ of length at most $N$.  For each 
$n$ with $|n|\leq N$, check whether the reduction of the word 
$w_1^n\cdots w_l^n$ is in $\cL$; whenever this happens, output $n$.  
Keep repeating this programme for larger and larger values of $N$.  

If $G$ is countable then $G$ contains only countably many finite 
sequences $\ug$ of elements, so the function $\ug\mapsto \cR(G,\ug)$ 
can take only countably many values.  If $\phi:G\rightarrow H$ is 
an isomorphism and $\phi(\ug)$ denotes the sequence $(\phi(g_1),\ldots
,\phi(g_l))$ of elements of $H$, then clearly $\cR(H,\phi(\ug))=
\cR(G,\ug)$.  
\end{proof} 

\begin{lemma} \label{lemmarggls}
Suppose that $\gamma=(a_1,\ldots,a_l)$ is a
  homotopically non-trivial edge loop in $L$, and let $\ua$ be the 
sequence $(a_1,\ldots,a_l)$ of elements of $G_L(S)$ for any $S$ 
containing zero.  In this case, $\cR(G_L(S),\ua)=S$.  
\end{lemma}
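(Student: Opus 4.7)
The plan is essentially to unpack the definition of the invariant $\cR$ and invoke Lemma~\ref{lemmarels}, which has just been established. By definition,
\[
\cR(G_L(S),\ua)=\{n\in\zz : a_1^na_2^n\cdots a_l^n=1 \text{ in } G_L(S)\}.
\]
Since $0\in S$ and $\gamma$ is a homotopically non-trivial edge loop in $L$, the hypotheses of Lemma~\ref{lemmarels} are met, and that lemma asserts precisely that $a_1^na_2^n\cdots a_l^n=1$ in $G_L(S)$ if and only if $n\in S$. Combining the two equivalences yields $\cR(G_L(S),\ua)=S$, as required.

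There is no serious obstacle left: the content of the statement has already been absorbed into Lemma~\ref{lemmarels}, whose proof in turn rested on the shadow analysis of Lemma~\ref{leminshadow}. The point is that for $n\neq 0$, the edge path in $X_0^{(S)}$ spelled out by $a_1^n\cdots a_l^n$ is, via the shadow map from a vertex $v$ of height $\pm n$, identified with the image of $\gamma$ in $\Lk^{\upw}_{X^{(S)}}(v)$ or $\Lk^{\downw}_{X^{(S)}}(v)$; this link is a copy of $L$ when $n\in S$ (so the path closes up) and a copy of $\widetilde L$ when $n\notin S$ (so the path fails to close up, precisely because $\gamma$ is homotopically non-trivial in $L$ and therefore lifts to a non-closed path in $\widetilde L$). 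The case $n=0$ is trivial because $0\in S$. So the only task in writing out the proof is to cite Lemma~\ref{lemmarels} and match definitions.
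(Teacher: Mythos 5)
Your proof is correct and is exactly the paper's argument: the paper also derives this lemma as an immediate consequence of Lemma~\ref{lemmarels}, which encodes the shadow-map analysis you recap. Nothing further is needed.
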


\begin{proof}  
This is immediate from Lemma~\ref{lemmarels}. 
\end{proof} 

\begin{corollary} 
If $\pi_1(L)$ is non-trivial then there are uncountably many
(in fact $2^{\aleph_0}$) isomorphism types of group $G_L(S)$. 
\end{corollary}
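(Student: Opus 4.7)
The plan is to derive the corollary directly from Lemma~\ref{lemmarggls} together with part~(3) of Proposition~\ref{proprgprops}, by a cardinality/pigeonhole argument; essentially all of the real work has already been done in those two statements.

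First, since $\pi_1(L)$ is non-trivial, I would fix once and for all a homotopically non-trivial edge loop $\gamma=(a_1,\ldots,a_l)$ in $L$. For each $S\subseteq \zz$ with $0\in S$, let $\ua_S$ denote the sequence $(a_1,\ldots,a_l)$ viewed as elements of $G_L(S)$ (via the presentation $P_L(\Gamma,S)$, whose generators are the directed edges of $L$). By Lemma~\ref{lemmarggls}, $\cR(G_L(S),\ua_S)=S$. This gives, for every $S$ containing $0$, a sequence in $G_L(S)$ realizing $S$ itself as a value of the invariant $\cR$.

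Next I would note, as is implicit in the last sentence of the proof of Proposition~\ref{proprgprops}, that $\cR$ is an isomorphism invariant: if $\phi\colon G\to H$ is an isomorphism and $\phi(\ug)$ is the image sequence, then $\cR(H,\phi(\ug))=\cR(G,\ug)$. Therefore, if $G_L(S)\cong G_L(T)$ then $S$ and $T$ both belong to the common set $\cN([G_L(S)]):=\{\cR(G_L(S),\ug):\ug\text{ a finite sequence in }G_L(S)\}$. Now each $G_L(S)$ is finitely generated, hence countable, so by Proposition~\ref{proprgprops}(3) this set $\cN([G_L(S)])$ is itself countable. In other words, the fibers of the map $S\mapsto [G_L(S)]$, from $\{S\subseteq\zz:0\in S\}$ to isomorphism classes of groups, are at most countable.

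Since there are $2^{\aleph_0}$ subsets of $\zz$ containing $0$, the image of this map has cardinality at least $2^{\aleph_0}/\aleph_0=2^{\aleph_0}$, proving the corollary. There is no genuine obstacle at this step: the substantive input is Lemma~\ref{lemmarggls}, which rests on the presentation theorem (Theorem~\ref{thmpres}) and the shadow calculation in Lemma~\ref{leminshadow}; once those are in hand, the corollary is purely a counting argument.
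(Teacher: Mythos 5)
Your proof is correct and is essentially the paper's own argument: the paper derives the corollary from Lemma~\ref{lemmarggls} and Proposition~\ref{proprgprops}(3) in exactly this way, and you have simply made the counting step (countable fibers of $S\mapsto[G_L(S)]$ over $2^{\aleph_0}$ subsets containing $0$) explicit. No gaps.
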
 

\begin{proof} 
Follows from part~3 of Proposition~\ref{proprgprops} and
Lemma~\ref{lemmarggls}. 
\end{proof}

\begin{corollary} 
If $\pi_1(L)$ is non-trivial then 
$G_L(S)$ is a subgroup of a finitely presented group if
  and only if $S$ is recursively enumerable.  
\end{corollary}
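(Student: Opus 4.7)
The plan is to prove both directions by combining the set-valued invariant $\cR$ with Higman's embedding theorem.

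For the forward direction, suppose $G_L(S)$ embeds as a subgroup of a finitely presented group $H$. The case $S=\emptyset$ is trivial, so assume $S$ is non-empty. Since isomorphism types of $G_L(S)$ depend only on the affine isometry class of $S$ and recursive enumerability is preserved by translation, we may assume $0\in S$. Because $\pi_1(L)$ is non-trivial, we may choose a homotopically non-trivial directed edge loop $\gamma=(a_1,\dots,a_l)$ in $L$; let $\ua$ denote the corresponding sequence of generators of $G_L(S)$. By Lemma~\ref{lemmarggls}, $\cR(G_L(S),\ua)=S$. Viewing $\ua$ as a sequence in $H$, part~(1) of Proposition~\ref{proprgprops} gives $\cR(H,\ua)=\cR(G_L(S),\ua)=S$, and part~(2) of that proposition tells us $\cR(H,\ua)$ is recursively enumerable since $H$ is finitely presented. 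Hence $S$ is recursively enumerable.

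For the reverse direction, suppose $S$ is recursively enumerable. Again the case $S=\emptyset$ is handled separately: $G_L(\emptyset)\cong BB_{\widetilde L}\semi\pi_1(L)$ has a finite presentation as described at the end of Section~14, so it is trivially a subgroup of a finitely presented group. Otherwise, translate so $0\in S$. Fix any finite collection $\Gamma$ of directed edge loops normally generating $\pi_1(L)$. By Theorem~\ref{thmpres}, $P_L(\Gamma,S)$ presents $G_L(S)$. The generators are the finitely many directed edges of $L$; the edge and triangle relations form a finite set; and the long cycle relations are in recursive bijection with $(S\setminus\{0\})\times\Gamma$, which is recursively enumerable because $S$ is. Therefore $P_L(\Gamma,S)$ is a recursive presentation of the finitely generated group $G_L(S)$.

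By Higman's embedding theorem~\cite{higman}, every finitely generated group admitting a recursive presentation embeds in a finitely presented group. Applying this to $G_L(S)$ completes the proof.

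The main obstacle is purely conceptual rather than technical: recognising that the presentations $P_L(\Gamma,S)$ constructed in Section~14, together with the set-valued invariant of Section~15, are precisely set up so that the problem reduces to Higman's classical theorem. Once one has the presentations and the computation $\cR(G_L(S),\ua)=S$ in hand, there are no further calculations to perform; both directions are short verifications.
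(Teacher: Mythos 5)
Your proposal is correct and follows essentially the same route as the paper: reduce to $0\in S$ by translation, use Lemma~\ref{lemmarggls} together with Proposition~\ref{proprgprops} to recover $S$ as the recursively enumerable invariant $\cR$ of an ambient finitely presented group, and in the converse direction feed the recursive presentation $P_L(\Gamma,S)$ into Higman's embedding theorem. The only difference is that you spell out the trivial case $S=\emptyset$ and the individual parts of Proposition~\ref{proprgprops} more explicitly than the paper does.
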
  

\begin{proof} 
First note that $G_L(\emptyset)$ is finitely presented.  Next note
that $S\subseteq \zz$ is recursively enumerable if and only if $S+m$
is, and $G_L(S)\cong G_L(S+m)$.  By these observations it suffices to
consider the case when $0\in S$.  If $G_L(S)$ is a subgroup of a
finitely presented group, then by Proposition~\ref{proprgprops},
$S=\cR(G_L(S),\ua)$ must be recursively enumerable.  For the converse,
if $S$ is recursively enumerable and $\Gamma$ is any finite set of
edge loops that normally generate $\pi_1(L)$ then $P_L(S,\Gamma)$ is a
recursive presentation for $G_L(S)$.  Higman's embedding theorem
implies that $G_L(S)$ embeds in a finitely presented
group~\cite{higman}.
\end{proof}  

This completes the proof of Theorem~\ref{thma}.  

\section{A classifying space for $G_L(S)$} 

The group $G_L(S)$ does not act freely on $X_L^{(S)}$ except 
when $S=\zz$ or $\pi_1(L)$ is trivial.  However, since only 
vertices have non-trivial stabilizers it is comparatively 
easy to construct a classifying space for $G_L(S)$.  A 
suitable small neighbourhood in $X_L^{(S)}$ of a singular vertex is 
$\pi_1(L)$-equivariantly isomorphic to the mapping cylinder
of the map $\sss(\widetilde{L})\rightarrow *$, where $*$ denotes
a point with trivial $\pi_1(L)$-action.  If at each such vertex
we replace this mapping cylinder by the mapping cylinder of 
the map $\sss(\widetilde{L})\rightarrow E\pi_1(L)$ then we obtain 
a contractible complex with a free $G_L(S)$ action.  Note that 
the map $\sss(\widetilde{L})\rightarrow E\pi_1(L)$ factors through 
$\widetilde{L}$.  There are other variations on this construction
that have useful corollaries.  

\begin{proposition}\label{propsfinfhr}
If $S$ is finite and $\widetilde{L}$ is $R$-acyclic
(resp.~$(n-1)$-$R$-acyclic) then $G_L(S)$ is $FH(R)$
(resp.~$FH_n(R)$).  
\end{proposition}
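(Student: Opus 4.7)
The plan is to construct an $R$-acyclic (resp.~$(n-1)$-$R$-acyclic), free, cocompact $G_L(S)$-CW-complex $Z$, by first isolating a cocompact slab of $X_L^{(S)}$ with vanishing reduced $R$-homology, and then replacing the small cone neighbourhood of each singular vertex by a mapping cylinder to $\widetilde L$, in the spirit of the $E\pi_1(L)$ construction at the start of this section but with $\widetilde L$ playing the role of $E\pi_1(L)$.

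Pick $k_0$ so large that $S\subseteq(-k_0,k_0)$, and let $W\subseteq X_L^{(S)}$ be a cocompact $G_L(S)$-CW-subcomplex with the homotopy type of the slab $X^{(S)}_{[-k_0-1/2,\,k_0+1/2]}$ (for instance, the union of closed cubes meeting this slab). Cocompactness follows from Theorem~\ref{thmexess}, since $G_L(S)$ acts transitively on vertices of each integer height and only finitely many heights lie in the slab. To show $W$ is $R$-acyclic, write $X_L^{(S)}=\bigcup_{k\geq k_0}X^{(S)}_{[-k-1/2,\,k+1/2]}$ and apply Theorem~\ref{thmmorse}: for each $k\geq k_0$, the inclusion of one slab into the next is realised, up to homotopy, by coning off ascending and descending links at vertices of heights $\pm(k+1)\notin S$. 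These links are copies of $\widetilde L$, which is $R$-acyclic (resp.~$(n-1)$-$R$-acyclic), so each cone attachment induces an isomorphism on reduced $R$-homology (resp.~on $\widetilde H_i$ for $i\leq n-1$). Hence every inclusion $W\hookrightarrow X^{(S)}_{[-k-1/2,\,k+1/2]}$ is an $R$-homology isomorphism in the relevant range, and since $X_L^{(S)}$ is contractible (being CAT(0)), passing to the direct limit forces $\widetilde H_i(W;R)=0$ for all $i$ (resp.~for $i\leq n-1$).

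By Corollary~\ref{corstabs}, the non-free vertices of $W$ are the $v$ at integer heights in $\zz\setminus S$, each with stabilizer $\pi_1(L)$ and with a small cone neighbourhood $\pi_1(L)$-equivariantly identified with the open cone on $\sss(\widetilde L)$. At each such $v$, excise the open cone neighbourhood and glue in the mapping cylinder $M_v$ of the $\pi_1(L)$-equivariant retraction $\sss(\widetilde L)\to\widetilde L$ of Proposition~\ref{propretr}; call the resulting $G_L(S)$-CW-complex $Z$. Since $\pi_1(L)$ acts freely by deck transformations on both $\widetilde L$ and $\sss(\widetilde L)$, it acts freely on each $M_v$, so the $G_L(S)$-action on $Z$ is free. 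Cocompactness is preserved because only finitely many $G_L(S)$-orbits of such $v$ occur in $W$ and, as $L$ and $\sss(L)$ are finite, each $M_v$ contributes only finitely many $\pi_1(L)$-orbits of cells. Finally, collapsing each inserted copy of $\widetilde L\subseteq M_v$ to a point restores $W$, so the CW-pair $(Z,\bigsqcup_v\widetilde L)$ has quotient $W$; the long exact sequence in reduced $R$-homology of this pair, together with the (appropriate) $R$-acyclicity of $W$ and of each $\widetilde L$, gives $\widetilde H_i(Z;R)=0$ for all $i$ (resp.~for $i\leq n-1$). This $Z$ witnesses $FH(R)$ (resp.~$FH_n(R)$).

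The main obstacle is the $R$-acyclicity of the slab $W$: since $W$ contains vertices whose ascending and descending links are copies of $L$, and the hypothesis does not force $L$ to be $R$-acyclic, the Morse function restricted to $W$ cannot control its $R$-homology directly. Finiteness of $S$ is what rescues us: it pushes all the problematic $L$-links into the interior of a sufficiently large slab, so that the Morse-theoretic comparison between $W$ and the contractible ambient complex $X_L^{(S)}$ is mediated entirely by cone attachments on the harmless $\widetilde L$-links.
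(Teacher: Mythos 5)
Your argument is correct and is essentially the paper's own proof: take a slab $X^{(S)}_{[-m-1/2,\,m+1/2]}$ large enough to contain all heights in $S$, note via Morse theory that it is $R$-acyclic (resp.\ $(n-1)$-$R$-acyclic) since every link coned off outside it is a copy of $\widetilde L$, and desingularize by swapping each cone neighbourhood $M(\sss(\widetilde{L})\to *)$ of a singular vertex for the mapping cylinder $M(\sss(\widetilde{L})\to\widetilde{L})$. The only (cosmetic) difference is the last step, where the paper simply observes that this swap leaves the $R$-homology unchanged (a Mayer--Vietoris comparison of the two gluings along $\sss(\widetilde{L})$), which is a little cleaner than your long exact sequence of the pair $(Z,\bigsqcup_v\widetilde{L})$ --- whose quotient collapses all the copies of $\widetilde{L}$ to a single point rather than to one point each, so the connecting map onto $\overline{H}_0$ of the infinitely many components still has to be checked --- but your conclusion is the same.
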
 

\begin{proof} 
To simplify the notation we concentrate on the case when
$\widetilde{L}$ is $R$-acyclic.  
As in the proof of Theorem~\ref{thmfpgeneral}, let
$X(m)=f^{-1}([-m-1/2,m+1/2]) \subseteq X_L^{(S)}$.  Provided that $m$
is sufficiently large that $S\subseteq [-m,m]$, the inclusion of 
$X(m)$ into $X_L^{(S)}$ is an $R$-homology isomorphism, and so $X(m)$ 
is $R$-acyclic, and $G=G_L(S)$ acts cocompactly but not freely.  We 
modify $X(m)$ to obtain a new $R$-acyclic space $Y$ with a free
$G$-action as follows.  
Remove from $X(m)$ an open ball of radius $1/4$ around each 
vertex $v$ with non-trivial stabilizer.  The closure of each such ball 
is $G_v$-equivariantly isomorphic to the mapping cylinder 
$M(\sss(\widetilde{L})\rightarrow *)$, where $G_v$ denotes the stabilizer 
of $v$.  Replace the removed ball by a copy of the mapping cylinder  
$M(\sss(\widetilde{L})\rightarrow \widetilde{L})$ to obtain $Y$.  
Since $\widetilde{L}$ is $R$-acyclic this replacement does not change the 
$R$-homology, and hence $Y$ is $R$-acyclic.  By construction, $G$ acts 
freely on $Y$, and the quotient $Y/G$ is obtained from the finite 
complex $X(m)/G$ by replacing finitely many copies of
$M(\sss(L)\rightarrow *)$ by copies of $M(\sss(L)\rightarrow L)$; in
particular $Y/G$ is finite and hence $G$ is $FH(R)$.  

In the case when $\widetilde{L}$ is only $(n-1)$-$R$-acyclic, the 
argument is similar, except that now we only know that the mapping 
cylinders $M(\sss(\widetilde{L})\rightarrow *)$ and 
$M(\sss(\widetilde{L})\rightarrow \widetilde{L})$ have the same
$R$-homology in degrees less than or equal to $(n-1)$.  Hence in 
this case the free $G$-complex $Y$ is only $(n-1)$-$R$-acyclic, 
and we deduce only that $G$ is $FH_n(R)$.  
\end{proof} 

If $L$ is $R$-acyclic and $\zz-S$ is finite but $\widetilde{L}$ is 
not $R$-acyclic, then Corollary~\ref{corfpr} implies that $G_L(S)$ 
is $FP(R)$.  In this case we do not know whether $G_L(S)$ is 
$FH(R)$, even if we assume that $\pi_1(L)$ is $FH(R)$.  Let $E$ be 
an $R$-acyclic complex with a free cocompact action of $\pi_1(L)$.  
Given a $\pi_1(L)$-equivariant map $\sss(\widetilde{L})\rightarrow E$, 
the technique used above would show that $G$ is $FH(R)$.  

\begin{theorem} 
If $L$ is $R$-acyclic then for any $S$, there is an isomorphism of 
ordinary group homology
\[H_*(G_L(S);R)\cong H_*(G_L(\zz);R)\oplus\bigoplus_{\zz-S} \overline
H_*(\pi_1(L);R).\]
This isomorphism is natural for inclusions $S\subseteq T$.  
\end{theorem}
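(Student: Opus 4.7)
The plan is to combine the pushout description of a classifying space for $G_L(S)$ with the chain contractibility of $\widetilde C_*(L;R)$ to identify $H_*(G_L(S);R)$ with the homology of a wedge of $BBB_L$ with copies of $B\pi_1(L)$.

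Write $Y=X_L/BB_L$ (a model for $BBB_L$), let $V\subseteq Y$ denote the set of singular vertices (indexed by $\zz-S$), and let $A'$ denote the complement in $Y$ of a small open neighbourhood of $V$. The classifying space construction described at the start of this section presents $BG_L(S)$, up to homotopy equivalence, as the pushout
\[A'\longleftarrow\bigsqcup_{v\in V}\sss(L)\xrightarrow{\,j\,}\bigsqcup_{v\in V}B\pi_1(L),\]
in which the left arrow is the link inclusion and the attaching map $j$ factors as $\sss(L)\xrightarrow{r}L\xrightarrow{c}B\pi_1(L)$ via the retraction $r$ of Proposition~\ref{propretr} and the classifying map $c$. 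Meanwhile $Y$ itself is the analogous pushout in which each $B\pi_1(L)$ is replaced by the cone $C\sss(L)$.

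The crucial observation is that since $L$ is $R$-acyclic, the reduced cellular chain complex $\widetilde C_*(L;R)$ is a bounded acyclic complex of free $R$-modules, and hence admits a chain contraction $h_L$. The composite $H:=c_*\circ h_L\circ r_*$ is then a chain homotopy between the induced chain map $j_*$ and the basepoint-constant chain map $j^0_*$ obtained by collapsing each $\sss(L)$ to the basepoint of the corresponding $B\pi_1(L)$; since $r$ and $c$ are basepoint preserving, $j_*$ and $j^0_*$ agree already in degree zero, so $H$ can be taken to vanish on $C_0$. The Mayer--Vietoris mapping cone for the pushout is chain-homotopy invariant in the attaching map via the explicit shear $(\sigma,a,b)\mapsto(\sigma,a,b+H(\sigma))$, so $C_*(BG_L(S);R)$ is chain equivalent to the cellular $R$-chain complex of the pushout along $j^0$ in place of $j$.

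But this pushout along $j^0$ is $A'/\bigsqcup_V\sss(L)$ with a copy of $B\pi_1(L)$ wedged at each of the $|V|$ collapsed points, and because each cone $C\sss(L)$ is contractible the quotient map $Y\to A'/\bigsqcup_V\sss(L)$ is a homotopy equivalence. The standard homology wedge formula then gives
\[H_*(G_L(S);R)\cong H_*(Y;R)\oplus\bigoplus_{v\in V}\widetilde H_*(B\pi_1(L);R)=H_*(G_L(\zz);R)\oplus\bigoplus_{\zz-S}\overline H_*(\pi_1(L);R).\]
Naturality with respect to inclusions $S\subseteq T$ is immediate: every stage of the construction (the pushout diagram, the factorization of $j$ through $L$, the chain homotopy $H$) is natural in the shrinking singular-vertex set $V$. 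The main technical hurdle is the chain-homotopy invariance of the pushout chain complex, but the explicit shear isomorphism above settles it.
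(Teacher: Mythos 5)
Your proof is correct and takes essentially the same route as the paper: both start from the mapping-cylinder classifying space described at the beginning of this section and both derive the splitting from the $R$-acyclicity of $L$ --- the paper by passing through an intermediate space with copies of $L$ glued in at the singular vertices and applying Mayer--Vietoris over the $R$-acyclic intersection $\coprod L$, you by chain-contracting $\widetilde{C}_*(L;R)$ and shearing the attaching map to a constant. The only point to tidy is degree zero: $j_*$ and $j^0_*$ agree on $0$-chains only if the classifying map $c$ sends every vertex of $L$ to the basepoint, which you should either arrange (take $B\pi_1(L)$ with a single $0$-cell) or handle by extending the contraction through the augmentation.
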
 

\begin{remark} The ordinary homology and cohomology of $BB_L\cong G_L(\zz)$ 
was computed for all $L$ in~\cite{LS}.  In the case when $L$ is
$R$-acyclic, $H_i(G_L(\zz);R)$ is a free $R$-module isomorphic to the 
$(i-1)$-cycles in the augmented chain complex for $L$.  
\end{remark} 

\begin{proof} 
Let $Y$ be the free $G=G_L(S)$-complex in which $X_L^{(S)}$ is 
desingularized by replacing a ball around each singular vertex 
with the mapping cylinder of the $\pi_1(L)$-equivariant map 
$\sss(\widetilde{L})\rightarrow \widetilde{L}$.  There is a 
$G$-equivariant map $Y\rightarrow X$ given by collapsing 
the copies of $\widetilde{L}$ to the vertices that they replaced. 
Let $E$ be the universal covering space of any model for the classifying space
for $\pi_1(L)$, fix a classifying map $\widetilde{L}\rightarrow E$, 
and let $Z$ be built from $X_L^{(S)}$ by replacing a ball around 
each singular vertex 
with the concatenation of the two mapping cylinders for the 
following $\pi_1(L)$-equivariant maps: 
\[\sss(\widetilde{L})\rightarrow \widetilde{L}\rightarrow E.\]  
By construction, $Z$ is contractible, and $G$ acts freely on $Z$ 
so that $Z/G$ is a classifying space for $G$.  

Now $X_L^{(S)}/G\cong X_L/BB_L=X_L/G_L(\zz)$, and $Y/G$ is obtained
from $X_L^{(S)}/G$ by replacing a ball around each vertex with height 
in $\zz-S$ by the mapping cylinder of $\sss(L)\rightarrow L$.  Since 
$L$ is $R$-acyclic, this implies that the map $Y/G\rightarrow X/G$ 
(given by collapsing each copy of $L$ down to the vertex it replaces) 
is an $R$-homology isomorphism.  Now $Z/G$ is obtained from $Y/G$ by 
attaching a copy of the mapping cylinder of $L\rightarrow E/\pi_1(L)$
to the copy of $L$ that replaced the vertex at each height in
$\zz-S$.  Since $L$ is $R$-acyclic, the Mayer-Vietoris sequence 
expressing $Z/G$ as the union of $Y/G$ and $\coprod_{\zz-S}
M(L\rightarrow E/\pi_1(L))$ gives the claimed isomorphism.  
\end{proof} 

\begin{corollary}\label{coracyc} 
In the case when $L$ and $\widetilde L$ are both $R$-acyclic, for 
each $S\subseteq T$ the natural map $H_*(G_L(S);R)\rightarrow H_*(G_L(T);R)$ 
is an isomorphism.  
\end{corollary}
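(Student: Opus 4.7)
The plan is to deduce this corollary directly from the theorem immediately preceding it, which asserts that for $L$ that is $R$-acyclic one has a natural isomorphism
\[
H_*(G_L(S);R)\cong H_*(G_L(\zz);R)\oplus\bigoplus_{\zz-S} \overline H_*(\pi_1(L);R).
\]
Under the additional hypothesis that $\widetilde L$ is also $R$-acyclic, the plan is to show that the second summand vanishes, so that both sides of the inclusion $S\subseteq T$ become naturally identified with $H_*(G_L(\zz);R)$.

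The key observation I would establish first is that the hypotheses force $\overline H_*(\pi_1(L);R)=0$. Because $\pi_1(L)$ acts freely and cellularly on $\widetilde L$, the cellular chain complex $C_*(\widetilde L;R)$ is a complex of free $R\pi_1(L)$-modules, and the $R$-acyclicity of $\widetilde L$ shows that this complex is an $R\pi_1(L)$-free resolution of $R$. Consequently $H_*(\pi_1(L);R)\cong H_*(L;R)$, which is concentrated in degree zero by the $R$-acyclicity of $L$. Hence $\overline H_*(\pi_1(L);R)=0$, so each direct sum in the previous theorem collapses and one obtains $H_*(G_L(S);R)\cong H_*(G_L(\zz);R)$ for every $S\subseteq\zz$.

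To finish I would invoke the naturality clause in the previous theorem: for $S\subseteq T$ the square whose horizontal arrows are the decomposition isomorphisms and whose left vertical arrow is the map in question commutes, and the right vertical arrow is necessarily the identity on the $H_*(G_L(\zz);R)$ factor and the evident projection on the direct-sum factor. Since the direct-sum factor is zero on both sides, the right vertical arrow is simply the identity, and therefore so is the left vertical arrow, up to the canonical isomorphisms. I do not anticipate any real obstacle: the theorem does all the work, and the only genuine content is the elementary homological computation that $\widetilde L$ being a free $R$-acyclic $\pi_1(L)$-complex implies $H_*(\pi_1(L);R)\cong H_*(L;R)$.
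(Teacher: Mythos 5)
Your proposal is correct and is essentially the paper's own argument: the paper likewise observes that $R$-acyclicity of $\widetilde{L}$ gives $H_*(\pi_1(L);R)\cong H_*(L;R)$, so $R$-acyclicity of $L$ makes $\pi_1(L)$ $R$-acyclic, killing the extra summand in the preceding theorem. Your spelled-out justification (the cellular chains of $\widetilde{L}$ forming a free $R\pi_1(L)$-resolution) and the appeal to naturality are exactly what the paper's terse proof leaves implicit.
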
 

\begin{proof} 
If $\widetilde{L}$ is $R$-acyclic, then $H_*(L;R)$ is isomorphic to
the $R$-homology of the group $\pi_1(L)$.  Hence when $L$ is also 
$R$-acyclic, the group $\pi_1(L)$ is $R$-acyclic.  
\end{proof} 

\begin{theorem} 
If $L$ and $\widetilde L$ are both $R$-acyclic, then for each 
$S\subseteq T$, the kernel of the map $G_L(S)\rightarrow G_L(T)$ 
is $R$-acyclic. 
\end{theorem}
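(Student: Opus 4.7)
The plan is to compute $H_*(K;R)$ via the equivariant homology spectral sequence for the action of $K$ on the contractible $\CAT(0)$ cube complex $X=X_L^{(S)}$. The first task is to identify the cell stabilizers of $K$. By Corollary~\ref{corstabs}, the stabilizers of cells of $X$ under $G_L(S)$ are trivial except at vertices of height in $\zz-S$, where they equal $\pi_1(L)$. For a vertex $v$ of height $n\in \zz-S$, the projection $G_L(S)\twoheadrightarrow G_L(T)$ restricts to an isomorphism between the $\pi_1(L)$ stabilizer of $v$ and that of its image when $n\in \zz-T$, and to the trivial map when $n\in T-S$. Consequently $K$ acts freely on $X$ except at vertices of height in $T-S$, where the $K$-stabilizer is a copy of $\pi_1(L)$. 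The orbit space $X/K$ equals $X_L^{(T)}$ by Corollary~\ref{brancov}.

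Now I would apply the equivariant homology spectral sequence
\[E^1_{p,q}=\bigoplus_{\sigma\in K\backslash X_p} H_q(K_\sigma;R)\Rightarrow H_{p+q}(K;R).\]
The $q=0$ row is the cellular chain complex of $X/K=X_L^{(T)}$ with constant coefficients $R$, so $E^2_{p,0}=H_p(X_L^{(T)};R)$, which is $R$ for $p=0$ and zero otherwise since $X_L^{(T)}$ is $\CAT(0)$, hence contractible. For $q\geq 1$ the only contributions come from the distinguished vertices, so $E^2_{0,q}=\bigoplus_v H_q(\pi_1(L);R)$, summed over vertices of $X_L^{(T)}$ of height in $T-S$, and $E^2_{p,q}=0$ for $p\geq 1$. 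Since every nonzero $E^2$-term lies in the column $p=0$, all higher differentials vanish for trivial range reasons and $E^\infty=E^2$.

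It remains to observe that $\pi_1(L)$ is $R$-acyclic under our hypotheses. This is already the argument used in the proof of Corollary~\ref{coracyc}: $\pi_1(L)$ acts freely on the $R$-acyclic complex $\widetilde L$, so the Cartan--Leray spectral sequence collapses and identifies $H_*(\pi_1(L);R)$ with $H_*(L;R)$, which vanishes in positive degrees because $L$ is $R$-acyclic. Hence $E^\infty_{0,q}=0$ for all $q\geq 1$, giving $H_q(K;R)=0$ for $q\geq 1$, so $K$ is $R$-acyclic as claimed.

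The main point requiring care is the identification of the cell stabilizers of $K$; once that bookkeeping is done, the spectral sequence is so sparse (everything concentrated in a single column) that the rest of the argument is automatic. A minor subtlety worth checking is that the $K$-action on the orbit set of branch vertices does not introduce orientation twists in the local coefficient system at $(0,q)$, but since all higher-dimensional cells through a branch vertex have trivial stabilizer, the stabilizer acts trivially on the relevant chain groups and the coefficient system is untwisted.
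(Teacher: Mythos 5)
Your proof is correct, and it takes a different (though closely related) route from the paper's. You run the equivariant homology spectral sequence for the $K$-action on the contractible complex $X_L^{(S)}$, reducing everything to two facts: the stabilizer bookkeeping from Corollary~\ref{brancov} (free action except for $\pi_1(L)$-stabilizers at vertices of height in $T-S$, with quotient $X_L^{(T)}$), and the $R$-acyclicity of $\pi_1(L)$, which as you note is exactly the content of the argument for Corollary~\ref{coracyc}. The paper instead argues geometrically: it desingularizes $X_L^{(S)}$ by replacing a ball around each branch vertex with the mapping cylinder of $\sss(\widetilde{L})\rightarrow\widetilde{L}$, producing a genuinely free $K$-complex $X$ that is still $R$-acyclic because $\widetilde{L}$ is, and then identifies $X/K$ with the analogous modification of $X_L^{(T)}$ (replacing vertex neighbourhoods by mapping cylinders of $\sss(L)\rightarrow L$), which is $R$-acyclic because $L$ is. The two arguments use the hypotheses in slightly different packaging — the paper uses the acyclicity of $\widetilde{L}$ and of $L$ in two separate replacement steps, while you combine them once into the acyclicity of $\pi_1(L)$ via Cartan--Leray. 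Your version is shorter and makes the role of the vertex stabilizers transparent; the paper's version has the advantage of producing an explicit free $K$-CW-complex and an explicit model for the quotient, a construction it reuses elsewhere (e.g.\ for the cohomology computations and the Lyndon--Hochschild--Serre argument in the following corollary). Your closing remark about orientation twists is handled correctly: since only vertices have non-trivial stabilizers, no local coefficient issues arise.
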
 

\begin{proof} 
Let $K$ be the kernel of the map $G_L(S)\rightarrow G_L(T)$.  
By Corollary~\ref{brancov}, $K$ acts on $X_L^{(S)}$ freely except
that each vertex of height in $T-S$ has stabilizer isomorphic to 
$\pi_1(L)$.  As above, a suitable small neighbourhood of each 
such vertex can be identified with the mapping cylinder of the map 
$\sss(\widetilde{L})\rightarrow *$.  By replacing each such neighbourhood
by a copy of the mapping cylinder for the map 
$\sss(\widetilde{L})\rightarrow \widetilde{L}$ we obtain a space 
$X$ on which $K$ acts freely.  Since $\widetilde{L}$ is $R$-acyclic, 
$X$ has the same $R$-homology as $X_L^{(S)}$ and so $X$ is $R$-acyclic. 
Hence $X$ can be used to compute group cohomology with coefficients in 
any $RK$-module.  By Corollary~\ref{brancov}, $X/K$ is homeomorphic to 
the space $Y$ constructed as follows.  In $X_L^{(T)}$, each vertex of 
height in $T$ has a neighbourhood that can be identified with the 
mapping cylinder of $\sss(L)\rightarrow *$.  The space $Y$ is obtained by 
removing such a neighbourhood for each vertex of height in $T-S$, 
and replacing it with a copy of the mapping cylinder for 
$\sss(L)\rightarrow L$.  Since $L$ is $R$-acyclic, the $R$-homology and 
$R$-cohomology of $Y$ is isomorphic to that of the contractible space
$X_L^{(T)}$, which proves the claim.  
\end{proof}

\begin{corollary} 
Let $R$ be a principal ideal domain and suppose that $L$ and
$\widetilde{L}$ are both $R$-acyclic.  For any $S\subseteq T$ and any
$RG_L(T)$-module $M$, the natural map
\[H^*(G_L(T);M)\rightarrow H^*(G_L(S);M)\] 
is an isomorphism.  
\end{corollary}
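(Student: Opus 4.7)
The plan is to apply the Lyndon--Hochschild--Serre cohomology spectral sequence to the short exact sequence
$$1 \to K \to G_L(S) \to G_L(T) \to 1,$$
where $K = \ker(G_L(S) \twoheadrightarrow G_L(T))$. Since $M$ is pulled back from $G_L(T)$, the group $K$ acts trivially on $M$, and the edge homomorphism on the bottom row of
$$E_2^{p,q} = H^p(G_L(T); H^q(K; M)) \Longrightarrow H^{p+q}(G_L(S); M)$$
is the inflation map, which coincides with the natural map of the statement. It therefore suffices to prove that $H^q(K; M) = 0$ for every $q > 0$ and that $H^0(K; M) = M$.

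For this I would reuse the $R$-acyclic free $K$-CW-complex $X$ built in the proof of the preceding theorem by replacing, at each vertex of $X_L^{(S)}$ of height in $T - S$, a small neighbourhood (the mapping cylinder of $\sss(\widetilde L) \to *$) by a copy of the mapping cylinder of $\sss(\widetilde L) \to \widetilde L$. Freeness of the $K$-action makes $C_*(X; R)$ a complex of free $RK$-modules, and $R$-acyclicity of $X$ makes it into a resolution of the trivial $RK$-module $R$. Because $K$ acts trivially on $M$, the coinvariants adjunction gives
$$\Hom_{RK}(C_*(X;R), M) \cong \Hom_R(C_*(X/K;R), M).$$
That same proof also shows that $X/K$ is $R$-acyclic, so $C_*(X/K;R)$ is a resolution of $R$ by free $R$-modules; since $R$ is projective over itself, the cohomology of the right-hand complex computes $\mathrm{Ext}^*_R(R, M)$, which equals $M$ in degree $0$ and vanishes in positive degrees.

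The spectral sequence therefore collapses onto its $q = 0$ row, and the edge map furnishes the claimed isomorphism. I do not anticipate any serious obstacle beyond the routine identification of the edge map with the natural map in the statement. The hypothesis that $R$ be a principal ideal domain is not actually needed in this argument, since $R$ is always projective over itself; the same outline, using the homological LHS spectral sequence, would yield the corresponding statement for $H_*(-;M)$.
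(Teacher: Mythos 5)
Your argument is correct and follows essentially the same route as the paper: both proofs run the Lyndon--Hochschild--Serre spectral sequence for $K\rightarrowtail G_L(S)\twoheadrightarrow G_L(T)$, note that $K$ acts trivially on $M$ because the action is pulled back along the quotient map, and reduce everything to showing $H^0(K;M)=M$ and $H^q(K;M)=0$ for $q>0$, which rests on the preceding theorem. The one genuine difference is how that vanishing is obtained. The paper quotes the universal coefficient theorem: $K$ is $R$-acyclic, so $H^q(K;M)\cong\Hom_R(H_q(K;R),M)\oplus\mathrm{Ext}^1_R(H_{q-1}(K;R),M)=0$ for $q>0$; this is where the hypothesis that $R$ be a principal ideal domain enters. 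You instead observe that $C_*(X;R)$ is a free $RK$-resolution of $R$, that the adjunction $\Hom_{RK}(C_*(X;R),M)\cong\Hom_R(C_*(X/K;R),M)$ holds because $K$ acts trivially on $M$, and that $C_*(X/K;R)$ is a free $R$-resolution of $R$ since $X/K$ is $R$-acyclic, so the cohomology is $\mathrm{Ext}^*_R(R,M)$. That argument uses only that $R$ is projective over itself, so you are right that the PID hypothesis can be dropped; the inputs you need --- that the $K$-action on $X$ is free and that $X$ and $X/K$ are both $R$-acyclic --- are exactly what the preceding theorem establishes.
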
 

\begin{proof} 
Let $K$ be the kernel of the homomorphism $G_L(S)\twoheadrightarrow 
G_L(T)$.  Since the $G_L(S)$-action on $M$ is defined in terms of the
quotient map, every element of $K$ acts as the identity on $M$.  Hence 
$H^0(K;M)=M$ and by the universal coefficient theorem, $H^i(K;M)=0$ 
for $i>0$.  Now consider the Lyndon-Hochschild-Serre spectral sequence
for the group extension $K\rightarrowtail G_L(S)\twoheadrightarrow 
G_L(T)$ with coefficients in $M$.  In this spectral sequence 
$E_2^{i,j}=H^i(G_L(T);H^j(K;M))$ and the terms $E_\infty^{i,j}$ form 
a filtration of $H^{i+j}(G_L(S);M)$.  Since $E_2^{i,j}=0$ for $j\neq 0$, 
the spectral sequence collapses at the $E_2$-page.  The claimed isomorphism
follows.
\end{proof}

\section{Compactly-supported and $\ell^2$-cohomology} 

In~\cite{do}, Davis and Okun computed the cohomology with compact
supports and the $\ell^2$-cohomology of $X_t$ for each 
finite flag complex $L$ and any non-integer $t$.  (Throughout 
this section we will write $X$ instead of $X_L$ to simplify 
the notation.)  Their methods extend almost without change to 
a similar computation for $X_t^{(S)}$ for any $S\subseteq \zz$, 
$L$ and $t\notin \zz$.  In particular, this computes the
$\ell^2$-cohomology of the group $G_L(S)$ whenever both $L$ and 
$\widetilde{L}$ are $\qq$-acyclic, and computes (a filtration of) 
$H^*(G_L(S);RG_L(S))$ whenever $L$ and $\widetilde{L}$ are
$R$-acyclic.  We indicate briefly how to extend their methods to 
our situation and state the results obtained in this way.  

Davis and Okun decompose $X$ as a union of $BB_L$-orbits of sheets, 
and rely on the fact that the stabilizer of each sheet is a free 
abelian group.  Following their approach, we fix a non-integer $t$, 
and define a \emph{level sheet} or $t$-\emph{level sheet} to be an 
intersection $C_t=C\cap X^{(S)}_t$, for some sheet $C$ of $X^{(S)}$.  
If $C$ is an $n$-dimensional sheet in $X^{(S)}$, then the image 
of $C$ in $X^{(S)}/G_L(S)\cong X/BB_L$ is isomorphic to
$\rr^n/\zz^{n-1}$, and the image of $C_t$ in $X_t/BB_L$ 
is isomorphic to the $(n-1)$-torus 
$\rr^{n-1}/\zz^{n-1}$.  It follows that the stabilizer of the 
level sheet $C_t$ is isomorphic to $\zz^{n-1}$.  Although sheets 
in the same $G_L(S)$-orbit can intersect, they intersect in at 
most a single vertex, so the $t$-level sheets in a $G_L(S)$-orbit 
do not intersect.  Since each $n$-simplex $\sigma$ of $L$ gives 
rise to a $G_L(S)$-orbit of $(n+1)$-sheets, we see that there 
is a natural bijection between the $n$-simplices of $L$ and the 
$G_L(S)$-orbits of $n$-dimensional level sheets in~$X^{(S)}_t$.  

Put a polyhedral cell structure on the level set $X^{(S)}_t$ 
in which the cells are the intersections of cubes of $X^{(S)}$ with 
the level set; this cell structure is $G_L(S)$-equivariant and 
respects the decomposition into level sheets.  For $\sigma$ a
simplex of $L$, let $Y_\sigma$ be the subcomplex of $X^{(S)}_t$ 
comprising the union of all level sheets that correspond to $\sigma$.  
Each $Y_\sigma$ is a $G_L(S)$-subcomplex of $X^{(S)}_t$ and the 
stabilizer of each component of $Y_\sigma$ is a copy of $\zz^n$, 
where $n$ is the dimension of the simplex $\sigma$.  

Since $X_t=X^{(S)}_t$ is equal to the union of the subcomplexes
$Y_\sigma$, there is a surjective map of cellular chain complexes 
\[\bigoplus_{\sigma\in L} C_*(Y_\sigma)\rightarrow C_*(X^{(S)}_t).\]
Of course, some cells of $X_t$ arise in more than one $Y_\sigma$.  If
$c$ is an $n$-dimensional cell of $X_t$, then there is a unique
$n$-dimensional simplex $\sigma$ for which $c\in Y_\sigma$, and for
any $\tau\in L$, we have that $c\in Y_\tau$ if and only if $\sigma$ is
a face of $\tau$.  To compensate for the multiple counting of cells of
$X_t$, Davis and Okun introduce a double complex $C_{**}$, in which
$C_{i*}$ consists of the direct sum of one copy of $C_*(Y_{\sigma_0})$
for each chain $\sigma_0<\sigma_1< \cdots<\sigma_i$ of simplices of
$L$.  The indexing set for the summands of $C_{i*}$ is equivalently
the set of $i$-simplices of the barycentric subdivision $L'$ of $L$.
The bigraded homology of the double complex $C_{**}$, taken in the
horizontal direction, is zero for $i>0$ and is $C_j(X_t)$ for $i=0$;
essentially this follows from the fact that the Euler characteristic
of the star of any simplex of $L'$ is equal to~1.  

The case $S=\zz$ of the following theorem is~\cite[theorem~4.4]{do}.  

\begin{theorem}  
The $\ell^2$-Betti numbers of the space $X^{(S)}_t$ for $t\notin \zz$
are given in terms of the ordinary reduced Betti numbers of vertex
links in $L$ as follows:
\[
\beta_{(2)}^n(X^{(S)}_t;G_L(S))= \sum_{v\in L^0} 
\beta^n(\Star_L(v),\link_L(v)) = 
\sum_{v\in L^0} \overline{\beta}^{n-1}(\link_L(v)).\]
In the case when both $L$ and $\widetilde{L}$ are $\qq$-acyclic, 
these are the $\ell^2$-Betti numbers of the group $G_L(S)$, and 
so $\beta_{(2)}^n(G_L(S))$ does not depend on $S$.  
\end{theorem}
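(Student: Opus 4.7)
The plan is to mimic the Davis--Okun strategy for $X_t$ (case $S=\zz$), replacing $BB_L$ with $G_L(S)$ throughout, and then to verify that the double complex argument outlined before the theorem statement carries over. The preliminary step is to observe that for $t\notin\zz$ the level set $X^{(S)}_t$ misses every vertex of $X^{(S)}_L$, hence misses the branch locus of $c:X^{(S)}_L\to X_L/BB_L$; combined with the fact that $\pi_1(L)$ acts freely on the links $\sss(\widetilde L)$ of the branch vertices, this gives a free cellular $G_L(S)$-action on $X^{(S)}_t$, with compact quotient $X_t/BB_L$. Hence $\ell^{2}$-cohomology of $X^{(S)}_t$ is well-defined as an invariant of the $G_L(S)$-action.

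Next I would run the spectral sequence of the double complex $C_{**}$ in its $L^{2}$-version, exactly as in \cite{do}. Horizontal acyclicity of $C_{**}$ identifies the $E_{\infty}$-term with $H^{*}_{(2)}(X^{(S)}_t;G_L(S))$. To compute vertical $\ell^{2}$-cohomology of each summand $C^{*}_{(2)}(Y_{\sigma_0})$, note that for $\sigma_0$ of dimension $k$, each component of $Y_{\sigma_0}$ is an affine copy of $\rr^{k}$ whose $G_L(S)$-stabiliser is the translation lattice $\zz^{k}$, and distinct components in the same $G_L(S)$-orbit are permuted freely. By Shapiro/induction this reduces the vertical $\ell^{2}$-cohomology to that of $\zz^{k}$ acting on $\rr^{k}$; for $k\geq 1$ this vanishes identically by Cheeger--Gromov vanishing for infinite amenable groups, whereas for $k=0$ (so $\sigma_0=v$ is a vertex of $L$) the component is a single point with trivial stabiliser and contributes one copy of $\ell^{2}(G_L(S))$ concentrated in degree zero.

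Consequently the spectral sequence collapses onto a single row indexed by chains $v=\sigma_0<\sigma_1<\cdots<\sigma_i$ with $\sigma_0\in L^{0}$. These chains split as a disjoint union over vertices $v$, and for fixed $v$ the chains $v<\sigma_1<\cdots<\sigma_i$ are in bijection with the $(i-1)$-simplices of the barycentric subdivision $(\link_L(v))'$; the residual horizontal (\v{C}ech-type) differential is readily identified with the augmented simplicial cochain differential on $(\link_L(v))'$, tensored with $\ell^{2}(G_L(S))$. Taking von Neumann dimensions, the resulting cohomology in degree $n$ is $\sum_{v}\overline\beta^{\,n-1}(\link_L(v))$, and the long exact sequence of the pair $(\Star_L(v),\link_L(v))$ (with $\Star_L(v)$ contractible) supplies the alternative form $\beta^{n}(\Star_L(v),\link_L(v))$. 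Finally, when $L$ and $\widetilde L$ are both $\qq$-acyclic, $X^{(S)}_t$ is $\qq$-acyclic and carries a free cocompact $G_L(S)$-action, so its cellular chain complex provides a finitely generated free $\qq G_L(S)$-resolution of $\qq$; the $\ell^{2}$-Betti numbers of this complex therefore coincide with those of $G_L(S)$.

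The main obstacle is the vanishing step: one must be sure that the stabiliser of each component of $Y_{\sigma_0}$ is indeed the full translation lattice $\zz^{\dim\sigma_0}$ (this uses the regularity of the branched cover and transitivity of $BB_L$ on level sheets in a single $A_L$-orbit), and that the passage from ordinary to reduced $\ell^{2}$-cohomology does not interfere with the spectral sequence comparison --- both of these are standard in the $X_t$ setting and extend without essential change here, but they are the only places where the $G_L(S)$ framework could differ substantively from the Bestvina--Brady one.
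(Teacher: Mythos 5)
Your proposal is correct and follows essentially the same route as the paper: the Davis--Okun double complex with coefficients in the von Neumann algebra, the collapse of one spectral sequence to $H^*_{(2)}(X^{(S)}_t)$, the vanishing of the $\ell^2$-cohomology of the level sheets with stabiliser $\zz^k$ for $k\geq 1$, and the resulting identification of the surviving row with $\bigoplus_{v\in L^0}C^*(\Star_L(v),\link_L(v))$ (your augmented cochain complex of $(\link_L(v))'$ is the same object). The only cosmetic differences are that the paper computes the $\ell^2$-cohomology of $\zz^k$ directly rather than citing Cheeger--Gromov, and handles the reduced/unreduced issue by working with L\"uck's extended dimension, for which dimension-zero modules form a Serre class.
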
 

\begin{proof}  
Consider the spectral sequences coming from the double cochain complex 
\[E_0^{ij} = \Hom_G(C_{ij},\cN(G)),\]
where $\cN(G)$ is the group von Neumann algebra of $G=G_L(S)$.  
The spectral sequence in which the horizontal differential is 
taken first collapses to give $H^*(\Hom_G(C_*(X_t),\cN(G)))$, 
which has the same von Neumann dimension (in L\"uck's extended
definition~\cite{lue}) as the $\ell^2$-cohomology of $X^{(S)}_t$.  
The spectral sequence in which the vertical differential is 
taken first is the one appearing in the statement
of~\cite[lemma~2.1]{do}.  
The $E_1$-page of this spectral 
sequence is identified with simplicial cochains on $L'$ 
with certain non-constant coefficients; the graded coefficient
module for the simplex $\sigma_0<\sigma_1<\cdots<\sigma_i$ in 
degree $j$ is $H^j(Y_{\sigma_0};\cN(G))$.  Since the $\ell^2$-cohomology
of $\zz^n$ vanishes for $n>0$ and is one copy of $\cc$ in 
dimension zero for $n=0$, it follows that $H^j(Y_\sigma;\cN(G))$ 
is equal to $\cN(G)$ for $j=\dim(\sigma)=0$ and otherwise 
has von Neumann dimension zero.  Up to modules of von Neumann 
dimension zero, the $E_1$-page is identified with the direct 
sum $\bigoplus_{v\in L^0}C^*(\Star_L(v),\link_L(v);\cN(G))$ 
concentrated in the line $j=0$.  It follows that $E_2^{i0}$ is 
isomorphic to $\bigoplus_{v\in L^0}H^i(\Star_L(v),\link_L(v);\cN(G))$, 
while other terms in the $E_2$-page have von Neumann dimension zero.   
Since modules of von Neumann dimension zero form a Serre class, no 
subsequent differential can affect the von Neumann dimensions, 
which gives the claimed result.  
\end{proof} 

Similarly, the case $S=\zz$ of the following theorem 
is~\cite[theorem~4.3]{do}.  In the case of 
cohomology with compact supports, the relevant spectral 
sequence still collapses at the $E_2$-page, but the $E_2$-page
contains many non-zero terms in the same total degree, which 
makes the statement more complicated.  In the statement, 
we write $G_\sigma$ for the stabilizer in $G_L(S)$ of a 
level sheet of type $\sigma$.  The group $G_\sigma$ is 
free abelian of rank equal to the dimension of $\sigma$ 
and it is well-defined up to conjugacy.  

\begin{theorem}\label{thmcompactco}  
For $t\notin \zz$, for any ring $R$ and for $n\geq 0$, there is a 
finite sequence of $RG_L(S)$-submodules of the compactly
supported cohomology $H_c^n(X^{(S)}_t;R)$ so that the associated 
graded module is
\[{\mathrm{Gr}}H_c^n(X^{(S)}_t;R)\cong \bigoplus_{\sigma\in L}
H^{n-\dim{\sigma}}(\Star_L(\sigma),\link_L(\sigma);R)\otimes_R 
RG_L(S)/G_\sigma.\] 
In the case when $L$ and $\widetilde{L}$ are 
both $R$-acyclic, note that $H_c^n(X^{(S)}_t;R)\cong H^n(G;RG)$.  
\end{theorem}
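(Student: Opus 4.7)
The plan is to adapt the double complex argument of Davis and Okun from the preceding theorem and from \cite[theorem~4.3]{do}, replacing $\ell^2$-cochains with cellular cochains of compact support. I would reuse the same double chain complex $C_{**}$ built from the covering of $X_t^{(S)}$ by the subcomplexes $Y_\sigma$, together with its horizontal augmentation to $C_*(X_t^{(S)})$, which is exact. Applying the functor of compactly supported cochains with values in $R$ yields a first-quadrant double cochain complex. The covering $\{Y_\sigma\}_{\sigma \in L}$ is locally finite — each cube of $X_t^{(S)}$ lies in only finitely many level sheets — so the horizontal exactness is preserved after dualising, and the first spectral sequence (taking the horizontal differential first) collapses onto $H_c^*(X_t^{(S)}; R)$ along the vertical edge.

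For the second spectral sequence, the key input is the compactly supported cohomology of each $Y_\sigma$. If $\dim \sigma = d$, then each component of $Y_\sigma$ is a $d$-flat $\rr^d$ on which $G_\sigma \cong \zz^d$ acts freely and cocompactly by translations, and the set of components forms a single $G_L(S)$-orbit in bijection with $G_L(S)/G_\sigma$. Hence $H_c^j(Y_\sigma; R)$ vanishes unless $j = d$, in which case it is the permutation module $R[G_L(S)/G_\sigma]$ as a $G_L(S)$-module. The $E_1$-page is therefore supported in the bidegrees $(i, \dim \sigma_0)$ indexed by chains $\sigma_0 < \cdots < \sigma_i$ in $L$.

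Computing the $E_2$-page proceeds exactly as in \cite{do}: grouping the chains by their minimal element $\sigma$ of fixed dimension $j$ identifies the corresponding strand of the row-$j$ differential with the relative cochain complex $C^*(\Star_L(\sigma), \link_L(\sigma); R)\otimes_R R[G_L(S)/G_\sigma]$, so that
\[E_2^{i,j} = \bigoplus_{\sigma \in L,\; \dim \sigma = j} H^i(\Star_L(\sigma), \link_L(\sigma); R)\otimes_R R[G_L(S)/G_\sigma].\]
I then need to verify that the spectral sequence degenerates at $E_2$; since the non-zero entries lie precisely at bidegrees $(i, \dim \sigma)$, the support considerations from \cite{do} rule out higher differentials. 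The resulting filtration of $H_c^n(X_t^{(S)}; R)$ by the images of the terms with $j \leq n$ has as associated graded exactly the direct sum in the statement. For the final identification, when both $L$ and $\widetilde L$ are $R$-acyclic the first proposition of Section~10 shows that $X_t^{(S)}$ is $R$-acyclic, and Corollary~\ref{corregcov} provides a free cocompact action of $G_L(S)$; the standard identification then gives $H_c^*(X_t^{(S)}; R) \cong H^*(G_L(S); RG_L(S))$.

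The main difficulty I anticipate is ensuring that the spectral sequence argument carries through cleanly for compact supports. In the $\ell^2$ setting the argument was driven by dimension theory over $\cN(G)$, and the $E_2$-page was concentrated on a single row, whereas here $E_2$ is spread over multiple rows with several non-zero terms on each antidiagonal — hence only a filtration, not a splitting, is obtained. I will therefore need to be careful that compactly supported cochains really do behave like direct sums (rather than products) on the $Y_\sigma$-decomposition, which is where local finiteness of the covering is crucial, and that the bidegree pattern $(i, \dim \sigma)$ genuinely precludes all higher differentials between non-zero entries.
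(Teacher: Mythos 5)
Your proposal follows essentially the same route as the paper: the same Davis--Okun double complex built from the $Y_\sigma$, the same computation that $H_c^j(Y_\sigma;R)$ is $R[G_L(S)/G_\sigma]$ concentrated in degree $\dim\sigma$, the same identification of the $E_2$-page, and the same appeal to~\cite{do} for degeneration (the paper works with $\Hom_{G}(C_{ij},RG)$ rather than with compactly supported cochains, but these agree for the free cocompact action of $G_L(S)$ on $X_t^{(S)}$ given by Corollary~\ref{corregcov}). One caution: bidegree considerations alone do not kill the higher differentials $d_r\colon E_r^{i,j}\to E_r^{i+r,\,j-r+1}$, since both source and target can be non-zero; as in the paper, degeneration rests on the fact that the restriction maps $H_c^j(\rr^n;R)\to H_c^j(\rr^m;R)$ vanish for $\rr^m$ a proper subspace of $\rr^n$, not on the shape of the $E_2$-page.
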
 

In the statement the filtration grading is by $n-\dim(\sigma)$, so 
that the contribution coming from vertices of $L$ is a quotient 
of the compactly supported cohomology $H^n_c$ and simplices of 
largest possible dimension contribute a submodule of $H^n_c$.  

\begin{proof} 
In this case, one considers the spectral sequences arising from 
the double cochain complex 
\[E_0^{ij} = \Hom_G(C_{ij},RG).\]
As before, the spectral sequence in which the horizontal differential
is taken first has $E_1^{ij}=0$ for $i>0$, and $E_2^{0j}$ is isomorphic
to $H^j_c(X^{(S)};R)$.  In the spectral sequence in which the vertical 
differential is taken first $E_1^{ij}$ is the $i$th simplicial cochains 
on $L'$ with graded non-constant coefficients in which the simplex 
$\sigma_0<\cdots<\sigma_i$ is assigned $H^j_c(Y_{\sigma_0};R)$ in 
degree $j$.  As in~\cite[theorem~4.3]{do}, since $Y_{\sigma_0}$ is a 
disjoint union of level sheets, and $G_{\sigma_0}$ is free abelian of 
rank $\dim(\sigma_0)$, it follows that $H^j_c(Y_{\sigma_0};R)$ is 
equal to $RG/G_{\sigma_0}$ for $j= \dim(\sigma)$ and is zero 
otherwise.  The computation of the $E_2$-page and the fact that 
the higher differentials are trivial is more delicate than in the 
previous theorem, but it works in exactly the same way for $G_L(S)$ 
as for $BB_L$; essentially it follows from the fact that the 
map $H^j_c(\rr^n;R)\rightarrow H^j_c(\rr^m;R)$ is the zero map 
for each $j$ whenever $\rr^m$ is a proper subspace of $\rr^n$.  
Given this, the $E_1$-page splits as a direct sum of cochain 
complexes and 
\[E_2^{ij}\cong \bigoplus_{\dim(\sigma)=j} 
H^i(\Star_L(\sigma),\link_L(\sigma);R)\otimes_R 
RG_L(S)/G_\sigma.\] 
The claim follows.  
\end{proof} 

\begin{corollary} \label{corduality} 
The following are equivalent.   
\begin{itemize} 
\item 
For each $S\subseteq\zz$, $G_L(S)$ is an $n$-dimensional 
duality group.  
\item 
$L$ and $\widetilde{L}$ are acyclic and for every simplex 
$\sigma$ of $L$, the reduced homology of $\Lk_L(\sigma)$ 
is free and is non-zero only in degree $n-1-\dim(\sigma)$.  
\end{itemize} 
\end{corollary}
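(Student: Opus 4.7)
The plan is to read the corollary off Theorem \ref{thmcompactco} together with the standard characterisation: $G$ is an $n$-dimensional duality group exactly when $G$ is of type $FP(\zz)$, $H^i(G;\zz G)=0$ for $i\neq n$, and $H^n(G;\zz G)$ is a non-zero torsion-free abelian group.

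First I would observe that both sides of the equivalence force $L$ and $\widetilde L$ to be $\zz$-acyclic: the duality hypothesis makes each $G_L(S)$ of type $FP(\zz)$, and Theorem~A then gives acyclicity of $L$ and $\widetilde L$, while acyclicity is explicit on the combinatorial side.  Under this common hypothesis Theorem \ref{thmcompactco} (with $R=\zz$) identifies $H^i(G_L(S);\zz G_L(S))$ with $H^i_c(X^{(S)}_t;\zz)$ and equips it with a finite filtration whose associated graded is
\[\bigoplus_{\sigma\in L}\widetilde H^{i-1-\dim\sigma}(\Lk_L(\sigma);\zz)\otimes_{\zz}\zz G_L(S)/G_\sigma,\]
where I have used that $\Star_L(\sigma)$ is a cone on $\Lk_L(\sigma)$ to rewrite the relative cohomology as reduced link cohomology.

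For the direction from the link condition to duality, the hypothesis plus UCT gives that each $\widetilde H^{*}(\Lk_L(\sigma);\zz)$ is free and concentrated in degree $n-1-\dim\sigma$. Hence the graded above vanishes outside $i=n$, forcing $H^i(G_L(S);\zz G_L(S))=0$ for $i\neq n$. In degree $n$ each graded summand is a tensor product of two free $\zz$-modules, hence torsion-free; a finite extension of torsion-free abelian groups is torsion-free, so $H^n(G_L(S);\zz G_L(S))$ is torsion-free. With the convention $\widetilde H^{-1}(\emptyset)=\zz$, the top-dimensional simplices of $L$ provide non-vanishing and pin the dimension at $n$, so $G_L(S)$ is an $n$-dimensional duality group.

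For the reverse implication the obstacle is that torsion-freeness of a filtered module does not in general pass to its associated graded, so integral duality alone does not directly deliver the link conditions.  I would circumvent this by passing to $\mathbb F_p$-coefficients for each prime $p$. Since $H^n(G_L(S);\zz G_L(S))$ is $\zz$-torsion-free and concentrated in degree $n$, the universal coefficient theorem applied to the finite projective $\zz G_L(S)$-resolution of $\zz$ yields $H^i(G_L(S);\mathbb F_p G_L(S))=0$ for $i\neq n$ for every prime $p$. Applying Theorem \ref{thmcompactco} with $R=\mathbb F_p$, and using that $\mathbb F_p G_L(S)/G_\sigma$ is a non-zero free $\mathbb F_p$-module and hence faithful as a tensor factor, the vanishing passes to each summand of the graded: $\widetilde H^{i-1-\dim\sigma}(\Lk_L(\sigma);\mathbb F_p)=0$ for every $i\neq n$, every $\sigma$, and every prime~$p$. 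A final UCT bookkeeping at the level of each $\Lk_L(\sigma)$, combining vanishing of $\mathbb F_p$-cohomology outside a single degree for all $p$, upgrades to integral vanishing of $\widetilde H^j(\Lk_L(\sigma);\zz)$ for $j\neq n-1-\dim\sigma$ together with torsion-freeness in the remaining degree; translating from cohomology to homology by UCT one last time gives the stated condition. The main technical step is this $\mathbb F_p$-for-all-$p$ trick, which is what replaces a direct control of the associated graded by the filtered module.
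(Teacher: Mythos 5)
Your proposal is correct and follows essentially the same route as the paper, whose entire proof consists of citing Brown's characterisation of duality groups together with Theorem~B and Theorem~\ref{thmcompactco}. Your extra step with $\mathbb{F}_p$-coefficients for all primes $p$ is a legitimate (and necessary) way to handle the one genuine subtlety --- that torsion-freeness of the filtered module $H^n(G_L(S);\zz G_L(S))$ does not formally pass to its associated graded pieces --- which the paper's two-line proof leaves entirely implicit.
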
 

\begin{proof} 
A duality group is a group $\Gamma$ of type $FP$ such that 
$H^*(\Gamma;\zz\Gamma)$ is torsion-free and concentrated 
in a single degree~\cite[theorem~10.1]{brown}.  The claim 
follows from Theorem~\ref{thmb} and Theorem~\ref{thmcompactco}.  
\end{proof}

\section{Poincar\'e duality groups} 

As in~\cite{bieri,brown}, define a Poincar\'e duality group to be a 
group $\Gamma$ of type $FP$ such that $H^*(\Gamma;\zz\Gamma)$ 
is isomorphic (as abelian group) to $\zz$.  If 
$H^n(\Gamma;\zz\Gamma)\cong \zz$, then $\Gamma$ is an 
$n$-dimensional Poincar\'e duality group.  The fundamental 
group of a closed aspherical $n$-manifold is a finitely 
presented Poincar\'e duality group.  

In~\cite{davispd}, Davis constructed the first non-finitely 
presentable Poincar\'e duality groups.  More precisely, he 
applied his `reflection group trick' to 
show that every group of type $FH$ is a retract of a Poincar\'e 
duality group; see also~\cite{davisone,davisbook,mess}.  This theorem 
combined with the Bestvina-Brady 
examples gave the new Poincar\'e duality groups.  Similarly
Davis's theorem applied to the groups $G_L(S)$ gives rise
to an uncountable family of Poincar\'e duality groups. 
The following stronger result can be proved in a similar way.  

\begin{theorem}\label{thmuncpd}  
For each $n\geq 4$ there is a closed aspherical $n$-manifold $M$ 
that admits a family of acyclic regular coverings $M(S)$ indexed 
by the subsets $S$ of $\zz$, in such a way that for $S\subseteq T
\subseteq \zz$, $M(S)$ is a covering of $M(T)$.  Members of this 
family give rise to uncountably many non-isomorphic groups of deck 
transformations.  Each such group of deck transformations is an 
$n$-dimensional Poincar\'e duality group.  
\end{theorem}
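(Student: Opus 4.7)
The plan is to apply Davis's reflection group trick to a finite $K(G_L(\emptyset),1)$ and exploit the functoriality $S\mapsto G_L(S)$ to realize every $G_L(S)$ as the deck group of an acyclic regular cover of a single closed aspherical $n$-manifold $M$.  Fix a finite connected flag complex $L$ with nlcp such that both $L$ and $\widetilde L$ are $\zz$-acyclic and $\pi_1(L)$ is non-trivial; the flag triangulation of Higman's presentation $2$-complex from Section~2 is a concrete example.  For any $t\notin\zz$, set $K:=X_t/BB_L$.  By Corollary~\ref{corgzero} its universal cover is $X_t^{(\emptyset)}$, which is simply connected (Proposition of Section~10) and $\zz$-acyclic (using that $L$ and $\widetilde L$ are acyclic), hence contractible; so $K$ is a finite aspherical $K(G_L(\emptyset),1)$ and $G_L(\emptyset)$ is of type $F$.

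Thicken $K$ to a compact aspherical $n$-manifold with boundary $P$ (for each $n\geq 4$ one chooses the thickening and, if necessary, the dimension of $L$ appropriately), pick a flag triangulation $J$ of $\partial P$, and form Davis's complex $M := \mathcal U(W_J,P)$ associated to the right-angled Coxeter group $W_J$ on $J$~\cite{davispd,davisone,davisbook}.  Then $M$ is a closed aspherical $n$-manifold, and Davis's construction equips $\pi_1(M)$ with a retraction $r\colon\pi_1(M)\twoheadrightarrow \pi_1(P)=G_L(\emptyset)$.  Moreover---this is the feature of the reflection trick used specifically to build Poincar\'e duality groups from $FH$-type groups---one can arrange the construction so that the regular cover $M(\emptyset)\to M$ corresponding to $\ker r$ is itself a $\zz$-acyclic $n$-manifold; equivalently, $\ker r$ is $\zz$-acyclic.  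For each $S\subseteq\zz$, compose $r$ with the natural surjection $G_L(\emptyset)\twoheadrightarrow G_L(S)$ of Remark~\ref{rkfunctor} to obtain $\rho_S\colon\pi_1(M)\twoheadrightarrow G_L(S)$, and let $M(S)\to M$ be the regular cover with deck group $G_L(S)$ corresponding to $\ker\rho_S$.  For $S\subseteq T$ the inclusion $\ker\rho_S\subseteq\ker\rho_T$ yields the covering map $M(S)\to M(T)$.

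Since $M$ is aspherical, each $M(S)$ is aspherical, so $H_*(M(S);\zz)\cong H_*(\ker\rho_S;\zz)$, and $\ker\rho_S$ fits in an extension
\[
1\longrightarrow \ker r\longrightarrow \ker\rho_S\longrightarrow K_S\longrightarrow 1,
\]
where $K_S:=\ker(G_L(\emptyset)\twoheadrightarrow G_L(S))$ is $\zz$-acyclic by the theorem of Section~16 and $\ker r$ is $\zz$-acyclic by the previous paragraph.  The Lyndon--Hochschild--Serre spectral sequence collapses, giving $H_*(\ker\rho_S;\zz)=0$ in positive degrees, so $M(S)$ is $\zz$-acyclic.

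Finally, each $G_L(S)$ acts freely and cocompactly on the acyclic $n$-manifold $M(S)$; combining Poincar\'e--Lefschetz duality for this open manifold with its acyclicity gives $H^*(G_L(S);\zz G_L(S))\cong H^*_c(M(S);\zz)\cong H_{n-*}(M(S);\zz)$, concentrated in degree $n$ and equal to $\zz$ there.  Since $G_L(S)$ is of type $FP(\zz)$ by Theorem~B, this exhibits $G_L(S)$ as an $n$-dimensional Poincar\'e duality group, and Theorem~A supplies uncountably many non-isomorphic $G_L(S)$.  The principal technical hurdle is in invoking Davis's trick so that $\ker r$ is $\zz$-acyclic: since $G_L(\emptyset)$ itself is not acyclic in general, the straightforward $F$-group version of the reflection trick does not suffice, and one must refine the construction---along the lines of Davis's original $FH$-to-PD argument in \cite{davispd,davisbook}---in order to produce the desired acyclic cover.
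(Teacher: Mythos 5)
Your strategy---make $G_L(S)$ itself the deck group of an acyclic regular cover of the Davis manifold $M$---cannot work, and the step you flag as ``the principal technical hurdle'' is not a technicality that a refinement of the reflection trick can fix: it is false. If some acyclic regular cover $N\to M$ of a closed aspherical $n$-manifold had deck group $G_L(S)$, then $G_L(S)$ would act freely and cocompactly on the acyclic $n$-manifold $N$, and your own closing duality computation would exhibit $G_L(S)$ as an $n$-dimensional Poincar\'e duality group. But Theorem~\ref{thmcompactco} shows that ${\mathrm{Gr}}\,H^*(G_L(S);\zz G_L(S))$ contains summands of the form $\zz G_L(S)/G_\sigma$ with $G_\sigma$ of infinite index (for instance one for each maximal simplex of $L$, in the top degree), so $H^*(G_L(S);\zz G_L(S))$ is infinitely generated where it is non-zero and $G_L(S)$ is not a Poincar\'e duality group in any dimension. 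Hence $\ker r$ cannot be $\zz$-acyclic. One can also see this directly: $\ker r$ is a torsion-free finite-index subgroup of the right-angled Coxeter group on the triangulated boundary of the universal cover of $V$, and Davis's formula computes its homology as a direct sum of relative homology groups of pairs (cone on the boundary complex, full subcomplexes determined by the labelling), which do not all vanish.

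The paper's proof avoids exactly this trap. Instead of pulling back covers of $M$ along $\pi_1(M)\twoheadrightarrow G_L(\emptyset)\twoheadrightarrow G_L(S)$, it introduces for each $S$ a Coxeter group $W(S)=W_{K(S)}$ on the induced triangulation $K(S)$ of $\partial V(S)$, where $V(S)$ is the cover of $V$ corresponding to $X_t^{(S)}$, and assembles $M(S)$ from the contractible Davis complex $\Sigma(S)$ minus balls, glued to copies of the acyclic space $V(S)$; acyclicity of $M(S)$ is then immediate from Mayer--Vietoris, with no claim about $\ker r$. The deck group of $M(S)\to M$ is then not $G_L(S)$ but the larger group $H(S)=W'(S)\semi G_L(S)$, and it is these groups that are the Poincar\'e duality groups of the theorem; uncountability survives because $G_L(S)\leq H(S)$ and the invariant $\cR$ of Section~15 is inherited by overgroups. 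Your steps that do not depend on acyclicity of $\ker r$ (the identification of $X_t/BB_L$ as a finite $K(G_L(\emptyset),1)$, the functoriality of the covers, the spectral sequence argument, and the final duality computation) are correct, but they are applied to a family of covers that cannot be acyclic.
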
 

Before beginning the proof, we recall some of the material concerning
Coxeter groups that we will use; for more details
see~\cite{davisbook}.  Given a flag complex $K$, the right-angled
Coxeter group $W_K$ is the group generated by elements in bijective
correspondence with the vertices of $K$, subject only to the relations
that each generator has order two and that generators corresponding to
vertices that span an edge commute with each other.  If $L$ is a full
subcomplex of $K$, the natural map on generating sets embeds $W_{L}$
as a subgroup of $W_K$; in particular each $(i-1)$-simplex $\sigma$ of
$K$ gives rise to a subgroup $W_\sigma\leq W_K$ which is isomorphic to
the direct product $(C_2)^i$.  A \emph{special subgroup} of $W_K$ is either
the trivial subgroup or a subgroup $W_\sigma$ for $\sigma$ a simplex
of $K$; it is convenient to view the empty set as the unique
$-1$-simplex in $K$ and define $W_\emptyset$ to be the trivial
subgroup of $W_K$.  

The Davis complex $\Sigma_K$ is a CAT(0) cubical complex 
with a properly discontinuous cellular action of $W_K$.  
It is slightly easier to define first the barycentric 
subdivision of $\Sigma_K$.  This is the simplicial complex obtained
as the realization of the poset whose elements are the cosets of 
the special subgroups, ordered by inclusion.  For any 
$(i-1)$-simplex $\sigma$ of $K$ and any $w\in W$, the subposet 
of cosets contained in $wW_\sigma$ is isomorphic to the poset 
of faces of an $i$-cube, and these are the $i$-cubes of $\Sigma_K$.  
The complex $\Sigma_K$ is simply-connected, $W_K$ acts freely 
transitively on its vertex set, and the link of each vertex is $K$.  
It follows that $\Sigma_K$ is CAT(0).  All cube stabilizers are 
finite, and so any torsion-free subgroup of $W_K$ acts freely on 
$\Sigma_K$.  

\begin{lemma} \label{lemmfld} 
Let $X$ be a finite 2-complex.  For any $n\geq 4$ there is a 
compact triangulable manifold $V$ homotopy equivalent to $X$.  
\end{lemma}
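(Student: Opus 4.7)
The plan is to thicken the 2-complex $X$ to an $n$-manifold via a handle decomposition mirroring the cell structure of $X$. After collapsing a spanning tree of $X^{(1)}$, I may assume that $X$ has a single 0-cell, $g$ 1-cells, and $r$ 2-cells. I set $M_1 = \natural^g(\sss^1 \times D^{n-1})$, the boundary connected sum of $g$ copies of the solid torus $\sss^1 \times D^{n-1}$. This is a compact, orientable PL $n$-manifold with boundary that deformation retracts onto a wedge of $g$ circles, which I identify with $X^{(1)}$; its boundary $\partial M_1$ has dimension $n-1 \geq 3$.

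Each of the $g$ core circles of the 1-handles can be parallel-translated into $\partial M_1$, giving $g$ disjoint embedded circles there whose conjugacy classes generate $\pi_1(M_1) \cong \pi_1(X^{(1)})$. It follows that every attaching map $\varphi_i \colon \sss^1 \to X^{(1)}$ can be realized, up to free homotopy in $M_1$, by a loop $\tilde\varphi_i$ in $\partial M_1$. A general position argument in the $(n-1)$-dimensional manifold $\partial M_1$, valid because $n-1 \geq 3$, lets me take each $\tilde\varphi_i$ to be a PL embedding and arrange for the $r$ resulting circles to be pairwise disjoint. Since $\partial M_1$ is orientable, each $\tilde\varphi_i(\sss^1)$ has trivial normal bundle and so admits a tubular neighbourhood of the form $\sss^1 \times D^{n-2}$; any framing will do (a $\zz$-worth of choices when $n=4$, essentially unique when $n\geq 5$). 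I then attach an $n$-dimensional 2-handle $D^2 \times D^{n-2}$ along each such tubular neighbourhood to produce the compact triangulable $n$-manifold $V$.

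By construction $V$ deformation retracts onto $M_1$ together with the cores of the 2-handles, which is homotopy equivalent to $X^{(1)}$ with 2-cells attached by loops freely homotopic to the $\varphi_i$, hence to $X$. The main technical point is the general position step producing pairwise disjoint PL embeddings of the attaching circles in $\partial M_1$; this is where the hypothesis $n \geq 4$ is used, since embedding a $1$-complex in the $(n-1)$-manifold $\partial M_1$ by general position requires $n-1 \geq 3$. For $n \geq 5$ there is a quicker alternative via regular neighbourhoods, since any finite 2-complex PL embeds in $\rr^5 \subseteq \rr^n$ and a regular neighbourhood of such an embedding is a compact PL $n$-manifold homotopy equivalent to $X$, but the handle construction above has the virtue of working uniformly for all $n \geq 4$.
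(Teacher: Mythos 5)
Your construction is correct and is essentially the standard thickening argument. The paper itself offers no proof, deferring entirely to Davis's book \cite[theorem~11.6.4]{davisbook}, and the proof there is in substance what you wrote: realize the (reduced) $1$-skeleton by a boundary-connected sum of copies of $\sss^1\times D^{n-1}$, push the attaching circles into the $(n-1)$-dimensional boundary, make them disjointly PL embedded by general position (this is where $n\geq 4$ enters), and attach $2$-handles along trivialized tubular neighbourhoods, noting that the homotopy type of the result depends only on the free homotopy classes of the attaching circles and not on the chosen framings.
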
 

\begin{proof} 

See \cite[theorem~11.6.4]{davisbook}. 
\end{proof} 

\begin{lemma}\label{lemcx}
There is a finite aspherical 2-complex $X$ that admits a family of
acyclic regular covers $X(S)$ indexed by the subsets $S$ of $\zz$, 
with functoriality as in the statement of Theorem~\ref{thmuncpd}.  
\end{lemma}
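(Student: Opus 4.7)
The plan is to let $X$ be a non-integer level set of $X_L/BB_L$ for a suitable finite flag complex $L$. Specifically, choose $L$ to be a finite connected 2-dimensional flag complex such that both $L$ and $\widetilde{L}$ are $\zz$-acyclic and $\pi_1(L)$ is non-trivial — for instance, any flag triangulation of the presentation 2-complex for Higman's group, as discussed in Section 2. Then fix any $t\notin \zz$ (say $t=1/2$) and set
\[ X \,=\, X_t/BB_L. \]
Since $L$ has dimension $2$, the cube complex $X_L$ has dimension $3$; hence the level set $X_t$ has dimension $2$, and $BB_L$ acts freely and cocompactly on it, so $X$ is indeed a finite $2$-complex.

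Next I would check that $X$ is aspherical. By Corollary~\ref{corregcov} applied with $t\notin \zz$, the map $X_t^{(\emptyset)}\to X$ is a regular unbranched cover with deck group $G_L(\emptyset)$; the Proposition in Section 10 tells us $X_t^{(\emptyset)}$ is simply connected, so this is in fact the universal cover, and $\pi_1(X)\cong G_L(\emptyset)$. The same Proposition in Section 10, applied with $R=\zz$ and using that both $L$ and $\widetilde{L}$ are acyclic, shows $X_t^{(\emptyset)}$ is acyclic. Being a simply-connected $2$-dimensional acyclic CW complex, it is contractible by Hurewicz and Whitehead; thus $X$ is aspherical.

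For the family of covers, I would simply put $X(S):=X_t^{(S)}$ for every $S\subseteq \zz$. Again since $t\notin \zz$, Corollary~\ref{corregcov} gives that $X(S)\to X$ is a regular unbranched cover with deck group $G_L(S)$, and the level-set Proposition in Section 10 gives that $X(S)$ is acyclic. For $S\subseteq T\subseteq \zz$, Corollary~\ref{brancov} produces a regular branched cover $X_L^{(S)}\to X_L^{(T)}$ branched only at vertices of heights in $T-S\subseteq \zz$; restricting to the height-$t$ level set, which contains no vertices, yields an unbranched regular covering $X(S)=X_t^{(S)}\to X_t^{(T)}=X(T)$, giving the required functoriality. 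Finally, uncountably many non-isomorphic deck groups arise because $\pi_1(L)\neq 1$, so by Theorem~A the groups $\{G_L(S)\}_{S\subseteq \zz}$ contain uncountably many isomorphism types.

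The main subtlety is really just one of bookkeeping: one must pick a non-integer height $t$ so that (i) $X$'s universal cover is $X_t^{(\emptyset)}$, giving $\pi_1(X)=G_L(\emptyset)$, which is the one group that surjects onto every $G_L(S)$ (thereby furnishing a cover for every subset $S\subseteq \zz$, including $S=\emptyset$); and (ii) the branch loci of the maps $X_L^{(S)}\to X_L^{(T)}$, which live at integer heights, miss the level set entirely, so all the covers $X(S)\to X(T)$ are genuine unbranched coverings. Once $t\notin \zz$ is fixed, every remaining ingredient — asphericity, acyclicity of the covers, functoriality, and the count of isomorphism types — follows directly from results already established earlier in the paper.
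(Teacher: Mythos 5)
Your argument is correct and is essentially the paper's own proof: the paper likewise takes $L$ to be an acyclic, aspherical, non-simply-connected flag $2$-complex (the flag-triangulated Higman complex, which also has nlcp as the paper's proof assumes), sets $X=X_t/BB_L$ for $t\notin\zz$ and $X(S)=X_t^{(S)}$, and invokes Corollaries~\ref{corgzero}, \ref{corregcov} and~\ref{brancov} exactly as you do. Your explicit verification of asphericity via Hurewicz--Whitehead just spells out a step the paper leaves implicit.
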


\begin{proof} 
Let $L$ be a flag 2-complex that is acyclic and aspherical but 
not simply-connected, and satisfies nlcp; for example the 2-complex
described in Section~2.  
Let $X_t$ be a non-integer level set in the cubical complex $X_L$, and
define $X$ to be the finite 2-complex $X_t/BB_L$.  By
Corollary~\ref{corgzero}, the fundamental group of $X$ is
$G_L(\emptyset)$.  By Corollary~\ref{corregcov}, for each $S\subseteq
\zz$, $X$ has a regular covering space $X^{(S)}_t$, with $G_L(S)$ as
its group of deck transformations.  Define $X(S)$ to be this covering 
space of $X$.  By Corollary~\ref{brancov} this construction is 
functorial in $S$ in the following sense.  
For $S\subseteq T\subseteq
\zz$, the natural surjective homomorphism $G_L(S)\rightarrow G_L(T)$
can be used to define an action of $G_L(S)$ on $X(T)$, and there is a
regular covering map $X(S)\rightarrow X(T)$ which is
$G_L(S)$-equivariant.  Moreover, for each $S\subseteq \zz$,
$X(S)/G_L(S)=X$.
\end{proof}

\begin{proof} 
(of Theorem~\ref{thmuncpd}) 
Let $L$ be a flag complex as used in the proof of Lemma~\ref{lemcx},
and let $X$ be the aspherical 2-complex constructed in Lemma~\ref{lemcx}
with fundamental group $G_L(\emptyset)$.  Now fix $n\geq 4$, and 
let $V$ be a compact $n$-manifold homotopy 
equivalent to $X$ as in Lemma~\ref{lemmfld}.  Let $K$ be the 
barycentric subdivision of a triangulation of the boundary 
$\partial V$ of $V$.  
The fundamental group of $V$ is isomorphic to $G_L(\emptyset)$.  
For $S\subseteq \zz$, let $V(S)$ denote the cover of $V$ that 
is homotopy equivalent to $X(S)$, and similarly let $K(S)$ be 
the induced triangulation of $\partial V(S)$.  Thus $G_L(S)$ 
acts freely on $V(S)$ and on $K(S)$, with quotient spaces 
$V$ and $K$ respectively, and these spaces enjoy the same 
functoriality as the spaces $X(S)$.  Since $K$ is a barycentric
subdivision, it admits a simplicial map to an $(n-1)$-simplex 
$\Delta$ that is injective on each simplex; the vertices of 
$\Delta$ are labelled $0,1,\ldots,n-1$, and the map sends 
the barycentre of a simplex $\tau$ to $\dim(\tau)$.  There 
is also an induced map $K(S)\rightarrow \Delta$ for each
$S\subseteq\zz$.  

Now let $W$ denote the finitely-generated Coxeter group $W_K$, 
and for $S\subseteq \zz$, let $W(S)$ be the Coxeter group 
$W_{K(S)}$.  For $S\subseteq T$, the maps $K(S)\rightarrow K(T)$
and $K(S)\rightarrow K$ induce surjective group homomorphisms 
$W(S)\rightarrow W(T)$ and $W(S)\rightarrow W$, defining a 
functor.  The maps to $\Delta$ define surjective homomorphisms 
$W(S)\rightarrow W_\Delta$ and $W\rightarrow W_\Delta$, and 
we define $W'(S)$ and $W'$ to be the kernels of these maps.  
Thus $W'(S)$ and $W'$ are normal subgroups of index $2^n$ in 
$W(S)$ and $W$ respectively.  

Let $\Sigma$ denote the Davis complex $\Sigma_K$, and for 
$S\subseteq \zz$, let $\Sigma(S)$ denote the Davis complex 
$\Sigma_{K(S)}$ associated to the Coxeter group $W(S)$.  
For $S\subseteq T$ the surjective homomorphisms $W(S)\rightarrow W(T)$ 
and $W(S)\rightarrow W$ induce equivariant maps $\Sigma(S)\rightarrow 
\Sigma(T)$ and $\Sigma(S)\rightarrow \Sigma$.  Since each 
special subgroup of $W$ or $W(S)$ maps isomorphically to a 
subgroup of $W_\Delta\cong C_2^n$, the group $W'(S)$ acts 
freely on $\Sigma(S)$ and $W'$ acts freely on $\Sigma$.  

Now $G_L(S)$ acts freely on $K(S)$, which induces an action 
by automorphisms of $G_L(S)$ on $W(S)$ (one might describe 
this as an action by `diagram automorphisms').  Since the 
$G_L(S)$-action preserves the set of special subgroups 
there is an induced action of the semi-direct product 
$W(S)\semi G_L(S)$ on $\Sigma(S)$.  To simplify notation, 
let $J(S)=W(S)\semi G_L(S)$.  For $S\subseteq T$ 
there is a surjective group homomorphism $J(S)\rightarrow J(T)$, 
and the natural map 
$\Sigma(S)\rightarrow \Sigma(T)$ is equivariant for 
this map.  Since $K=K(S)/G_L(S)$, there is also a surjective 
group homomorphism $J(S)\rightarrow W$, and 
the natural map $\Sigma(S)\rightarrow \Sigma$ is equivariant
for this homomorphism.  

Now let $H(S)= W'(S)\semi G_L(S)$, so that $H(S)$ is an 
index $2^n$ normal subgroup of $J(S)$, and let $H= W'$.  This  
$H$ acts freely cocompactly on~$\Sigma$.  The action of 
$H(S)\leq J(S)$ on $\Sigma(S)$ is free except that each vertex 
of $\Sigma(S)$ has stabilizer a $J(S)$-conjugate of the standard
copy of $G_L(S)\leq H(S)$.  Nevertheless, the action is 
cocompact, and the natural maps induce isomorphisms of 
finite cube complexes $\Sigma(S)/H(S)\cong \Sigma/H$.  
Since the actions of $H(S)$ and $H(T)$ are free except 
at the vertices, 
the equivariant maps $\Sigma(S)\rightarrow \Sigma(T)$ 
and $\Sigma(S)\rightarrow \Sigma$ are branched coverings, 
with branching only at the vertices.  

The link of each vertex of $\Sigma(S)$ is a copy of the
$(n-1)$-manifold $K(S)$ and the link of each vertex of $\Sigma$ is a
copy of $K$.  From this it follows that $\Sigma(S)$ and $\Sigma$ are
locally homeomorphic to $\rr^n$, except at their vertices.  Remove
from $\Sigma(S)$ the open balls of radius $1/4$ with centres the
vertices of $\Sigma(S)$.  These balls have disjoint closures, and the
resulting space is a $J(S)$-manifold whose boundary is equivariantly
homeomorphic to $J(S)\times_{G_L(S)}K(S)$.  Define $M(S)$ by attaching
$J(S)\times_{G_L(S)}V$, which has the same boundary.  Let
$\overline{M}$ be defined similarly by removing open balls around the
vertices from $\Sigma$ and identifying the boundary of the resulting
manifold with the boundary of $W\times V$.  Now define $M$ to be
$\overline{M}/H$.  Since $H$ acts freely cocompactly on
$\overline{M}$, $M$ is a closed manifold, built by identifying the
boundaries of $2^n$ copies of $V$.  Since each $V(S)$ is acyclic, the
homology of $M(S)$ is isomorphic to the homology of $\Sigma(S)$, and
so each $M(S)$ is an acyclic manifold.  Since $V(\emptyset)$ is simply
connected, it follows that $M(\emptyset)$ is simply connected too, and
hence $M(\emptyset)$ is contractible.  For each $S$, $H(S)$ acts
freely cocompactly on the acyclic manifold $M(S)$ with $M(S)/H(S)\cong
M$, and for $S\subseteq T$ the map $M(S)\rightarrow M(T)$ is an
equivariant covering map.  The universal covering space of $M$ is the
contractible manifold $M(\emptyset)$, which implies that $M$ is
aspherical as claimed.

It remains to show that there are uncountably many isomorphism types
of group $H(S)$.  This follows from the fact that there
are uncountably many isomorphism types of group $G_L(S)$ since 
$G_L(S)$ is a subgroup of $H(S)$; alternatively one can apply 
the invariant $\cR$ directly to a suitable sequence of elements
of $H(S)$.  
\end{proof} 

\section{$FP_2$ via presentations} 

In this section we briefly outline an alternative approach to 
the groups $G_L(S)$, along the lines of the treatment of 
$BB_L$ given in~\cite{DL}.  This uses the presentation 
$P_L(S,\Gamma)$ and the geometry of $L$, but not $X_L^{(S)}$. 
These results represent our first approach to the groups 
$G_L(S)$ and they motivated our discovery of $X_L^{(S)}$.  

As usual, $L$ is a finite connected flag complex, and $\Gamma$ a finite 
collection of directed edge loops in $L$ such that $\pi_1(L)$ 
can be killed by attaching discs to these loops.  It will be 
useful to reduce the size of the sets of generators and relations, 
so we fix a choice of orientation for each edge $a$ of $L$, and 
take the presentation $P'_L(S,\Gamma)$ in which the generator 
$\overline{a}$ corresponding to the opposite orientation is 
eliminated, together with the length two relator $a\overline{a}$.  
To avoid having to write $(a_1^{\epsilon_1},\ldots,a_l^{\epsilon_l})$ 
for a directed edge loop, we make the convention that a single letter
such as $a_i$ may stand for either one of our preferred generators or 
its inverse.  

\begin{lemma} \label{rellem} 
For any directed edge loop $\beta=(b_1,\ldots,b_m)$ in $L$, and any $n\in S$, 
the relation $b_1^nb_2^n\cdots b_m^n=1$ is a consequence of the 
relations in $P'_L(S,\Gamma)$.  
\end{lemma}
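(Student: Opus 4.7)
The plan is to introduce a ``path invariant'' $w_n$ on edge paths in $L$, show that it behaves well under concatenation, and then verify that it is preserved (up to the relations of $P'_L(S,\Gamma)$) by the elementary homotopy moves that reduce any edge loop in $L$ to a point in the 2-complex $L^+$ obtained from $L$ by attaching a disc along each loop of $\Gamma$. For an edge path $\alpha=(a_1,\ldots,a_l)$ in $L$, set $w_n(\alpha):=a_1^na_2^n\cdots a_l^n$. Directly from the definition, $w_n$ is multiplicative under concatenation and inverts under path reversal, so it suffices to check invariance under elementary moves.

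The key algebraic step is to extract commutativity from the triangle relations. For each directed triangle $(a,b,c)$ of $L$, combining $abc=1$ with the inverse of $a^{-1}b^{-1}c^{-1}=1$ (which says $cba=1$) yields $ab=c^{-1}=ba$; applying the same argument to the two cyclic rotations of $(a,b,c)$ shows that $a$, $b$, and $c$ pairwise commute in the presented group. Consequently $a^nb^n=(ab)^n=c^{-n}$, so the triangle move replacing a subsegment $(a,b)$ by $(\bar c)$ in an edge path preserves $w_n$ modulo the triangle relations.

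Because $\Gamma$ normally generates $\pi_1(L)$, the 2-complex $L^+$ is simply connected, so $\beta$ is null-homotopic there and can be reduced to the trivial loop via a finite sequence of elementary moves. These moves fall into three types, each of which I will check preserves $w_n$ modulo the relations of $P'_L(S,\Gamma)$: (i) insertion or deletion of a spur $a\bar a$, which contributes $a^n a^{-n}=1$; (ii) a triangle move, handled by the commutativity above; (iii) insertion or deletion of a conjugate $\pi\gamma^{\pm 1}\pi^{-1}$ of a loop $\gamma\in\Gamma$, for which $w_n(\pi\gamma\pi^{-1})=w_n(\pi)\,w_n(\gamma)\,w_n(\pi)^{-1}$ and $w_n(\gamma)=1$ is precisely the long cycle relation in $P'_L(S,\Gamma)$ available because $n\in S$. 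Composing the invariances then gives $w_n(\beta)=w_n(\text{trivial loop})=1$.

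The only step requiring real care is the triangle commutativity and the bookkeeping needed to see that every elementary move preserves $w_n$; the reduction of an $L^+$-null-homotopic edge loop to a point by these three move types is standard combinatorial topology of simplicial 2-complexes, so no serious obstacle is expected.
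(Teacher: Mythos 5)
Your argument is correct and is essentially the same as the paper's (which, following \cite{DL}, packages the same bookkeeping as van Kampen diagrams obtained from simplicial maps of discs and simplicial homotopies into $L$, rather than as elementary moves on edge paths). The mathematical core is identical in both: the paired triangle relators force $a,b,c$ to commute so that $a^nb^nc^n=(abc)^n=1$, and simple-connectivity of $L$ with discs attached along $\Gamma$ reduces the general loop to these 2-cell boundaries.
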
 

\begin{proof} 
Identical to the proof in~\cite{DL}, which considered only the case 
$S=\zz$; we give a brief account mainly to establish notation.  
If $\beta$ is null-homotopic then there is a simplicial 
map from a disc $D^2$ to $L$ so that the boundary of the disc is 
the loop $\beta$.   This gives rise to a van Kampen diagram $D$ in 
the group presented by $P'_L(S,\Gamma)$, with $b_1^n\cdots b_m^n$ 
as its boundary word.  The interior regions of $D$ are triangles, 
each edge is labelled $a^n$ for some directed edge $a$ of $L$, or 
possibly $1=1^n$, and the labels around each triangle are either 
$a^nb^nc^n$ for a directed triangle $(a,b,c)$ of $L$, or $1^n1^n1^n$ 
or $1^na^na^{-n}$.  These relations are all consequences of the 
short relations in $P'$.  Similarly, if $\beta$ is homotopic to 
some $\gamma=(a_1,\ldots,a_l)\in \Gamma$, a simplicial homotopy 
can be used to build
a van Kampen diagram with $b_1^n\cdots b_m^n$ as its outer boundary, 
in which all regions are triangles with boundaries as above 
except for one larger disc with boundary word $a_1^na_2^n\cdots a_l^n$.  
An arbitrary edge loop $\beta$ can be expressed as a concatenation 
of loops of the above types.  
\end{proof} 

By the above lemma, the group $G_L(S)$ defined by the presentation
$P'_L(S,\Gamma)$ does not depend on choice of $\Gamma$.  Now let $\cC$
be the Cayley 2-complex of the presentation $P'$, and let $\cC_t$ 
be the subcomplex consisting of the 1-skeleton and the 2-cells coming 
from triangle relators.  Let $C_2\rightarrow
C_1\rightarrow C_0$ be the cellular chain complex for $\cC$, with 
$d:C_2\rightarrow C_1$ being the boundary map.  Let $C_2^t$ denote the
$G_L(S)$-submodule of $C_2$ coming from the triangle relators and let
$C_2^l$ denote the $G_L(S)$-submodule coming from the `long relators'
$a_1^n\cdots a_l^n$ for $n\in S$ and $\gamma\in \Gamma$.  Note that
$C_2^t$ is finitely generated, and that $C_2=C_2^l\oplus C_2^t$.

\begin{theorem} 
In the case when $\pi_1(L)$ is perfect, $d(C_2^l)\leq d(C_2^t)$.  
Hence in this case $G_L(S)$ is type $FP_2(\zz)$.  
\end{theorem}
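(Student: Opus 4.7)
The plan is to deduce $FP_2(\zz)$ from the inclusion $d(C_2^l)\subseteq d(C_2^t)$. Since $\cC$ is the universal cover of the presentation 2-complex of $P'_L(S,\Gamma)$, it is simply connected, so $H_1(\cC)=0$ and hence $\ker d_1=d(C_2)=d(C_2^t)+d(C_2^l)$. Granted the inclusion, the partial chain complex $C_2^t\to C_1\to C_0\to\zz\to 0$ is exact with all three modules finitely generated free over $\zz G_L(S)$, which gives $FP_2(\zz)$.

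To establish the inclusion it suffices, by $G_L(S)$-equivariance, to express the boundary of each long-relator 2-cell at the identity as a boundary of a chain in $C_2^t$. Fix $\gamma=(a_1,\ldots,a_l)\in\Gamma$ and $n\in S\setminus\{0\}$. I first claim that the rule $\phi_n:a_1a_2\cdots a_k\mapsto a_1^na_2^n\cdots a_k^n$ on edge paths in $L$ descends to a well-defined group homomorphism $\pi_1(L,v_0)\to G_L(S)$. Invariance under backtrack moves $a\overline{a}\leftrightarrow\emptyset$ follows from the edge relation $a\overline{a}=1$, and invariance under elementary triangle moves $ab\leftrightarrow c^{-1}$ for directed triangles $(a,b,c)$ of $L$ reduces to the identity $a^nb^nc^n=1$, which by Lemma~\ref{rellem} (applied to the null-homotopic triangle loop) is a consequence of the short relators of $P'$. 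Since $\phi_n(\gamma)=1$ in $G_L(S)$ for every $\gamma\in\Gamma$ by the long relators and the normal closure of $\Gamma$ is all of $\pi_1(L)$, the map $\phi_n$ must be trivial.

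The hypothesis that $\pi_1(L)$ is perfect enters at this point: $H_1(L;\zz)=\pi_1(L)^{ab}=0$, so $\gamma$ is null-homologous in $L$ and bounds a simplicial 2-chain, which we realise as a simplicial map $f:\Sigma\to L$ from a connected triangulated 2-complex $\Sigma$ (possibly with non-manifold points) whose boundary 1-cycle is $\gamma$. Fix $v_0\in\partial\Sigma$ and define $\hat v:=\phi_n(f(\alpha_v))\in G_L(S)$ for each vertex $v$ of $\Sigma$, using any path $\alpha_v$ from $v_0$ to $v$ in $\Sigma^{(1)}$; this is well-defined because any two such paths differ by a loop in $\Sigma$ whose image in $L$ is killed by the now-trivial $\phi_n$. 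For each 2-simplex $t$ of $\Sigma$ with $f(t)=(a,b,c)$ and chosen vertex $u_0(t)$, Lemma~\ref{rellem} provides a 2-chain $D_n(t)\in C_2^t$, based at $\hat u_0(t)$, whose cellular boundary is the scaled triangle loop $a^nb^nc^n$ starting at $\hat u_0(t)$. Setting $\alpha:=\sum_t\mathrm{sgn}(t)\,D_n(t)\in C_2^t$, the scaled 1-chains along interior edges of $\Sigma$ cancel in pairs (by consistency of $\hat v$ across the two adjacent triangles), while the 1-chains along $\partial\Sigma=\gamma$ assemble to the loop $a_1^na_2^n\cdots a_l^n$ starting at $1$, which is exactly the boundary of the long-relator 2-cell at $1$.

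The main obstacle is verifying that $\phi_n$ descends to a genuine homomorphism on $\pi_1(L)$; once this is in place the blow-up $\alpha$ glues coherently across $\Sigma$ and the cancellation of interior scaled edges is automatic. The role of the perfectness hypothesis is precisely to provide the bounding simplicial 2-chain $\Sigma$, without which the construction has no base on which to live.
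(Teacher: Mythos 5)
Your proof is correct and follows essentially the same route as the paper's: the paper likewise uses perfectness to bound $\gamma$ by a simplicial map of a compact orientable surface into $L$, defines the scaled vertex labels $g(v,n)$ (your $\hat v$) whose well-definedness rests on Lemma~\ref{rellem}, fills each scaled triangle boundary by a disc in $\cC_t$, and obtains the boundary of the long-relator cell from the cancellation of the interior contributions. Your packaging of the scaled-loop triviality as a homomorphism $\phi_n$ on $\pi_1(L)$ killed by the normal closure of $\Gamma$, and your use of a general bounding $2$-complex in place of a surface, are only cosmetic variations on the same argument.
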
 

\begin{proof} 
Fix $n\in S$ and let $\gamma=(a_1,a_2,\ldots,a_l)$ be a directed edge
loop in $\Gamma$.  By hypothesis, $\gamma$ represents the
trivial element of $H_1(L)$, and so we can find a compact orientable
surface $F$, with boundary $\partial F$ a single circle, and a
simplicial map from $F$ to $L$ so that $\gamma$ is the image of
$\partial F$.  This gives rise to a `generalized van Kampen diagram'
in the presentation $P'$, which we shall also call $F$, by a slight
abuse of notation.  Thus we have a labelling of the edges of $F$ by
elements $a^n$ or $1=1^n$, so that the word around $\partial F$ spells
out $a_1^n\cdots a_l^n$ and so that each triangle is labelled by a
relator that is a consequence of the triangle relators.  Fix as
basepoint $p\in F$ the vertex of $\partial F$ that corresponds to the
start of the word $a_1^n\cdots a_l^n$.  Note also that any edge path
in $F$ is mapped to an edge path in $L$.  For each vertex $v$ of $F$
and each $n\in S$, define $g(v,n)\in G_L(S)$ by 
$g(v,n)=b_1^n\cdots b_m^n$, where
$(b_1,\ldots,b_m)$ is any directed edge path in $L$ that is the image
of a directed edge path in $F$ from $p$ to $v$.  By
Lemma~\ref{rellem}, $g(v,n)$ depends only on $v$~and~$n\in S$ but
not on the choice of edge path.  Orient the triangles of $F$
consistently and pick a base point $v(\sigma)$ for each triangle
$\sigma$ of $F$.

Now fix a base vertex $x$ in the Cayley 2-complex $\cC$.  
For each triangle $\sigma$ of $F$, the word $a^nb^nc^n$ 
around the boundary of $\sigma$ is a consequence of the 
triangle relators.  Hence there is a disc $\phi(\sigma)$
in $\cC^t$ that bounds the path that starts at the vertex 
$g(v(\sigma),n).x$ and follows the edge path $a^nb^nc^n$. 
The union of all of these discs can be used to define a 
map from $F$ to $\cC^t$ such that $\partial F$ maps to 
edge path starting at $x$ and given by the word 
$a_1^n\cdots a_l^n$.  It follows that the attaching map for 
each 2-cell is contained in $d(C_2^t)$ and so $d(C_2^l)\subseteq 
d(C_2^t)$ as claimed.  
\end{proof} 

For any group presentation, a van Kampen diagram can be viewed 
as describing a map from a disc into the Cayley 2-complex.  However, 
a `generalized van Kampen diagram' on a surface $F$ does not 
in general define a map from $F$ to the Cayley 2-complex; the 
fact that this works for the presentation $P'_L(S,\Gamma)$ 
relies crucially on Lemma~\ref{rellem}.

\section{The homotopy type of level sets} 

In the original Bestvina-Brady paper~\cite{BB}, some parts of the 
main theorem depend upon a study of the homotopy types of level 
sets $X_{[a,b]}=f^{-1}([a,b])\subseteq X_L$, which comprises 
section~8 of~\cite{BB}.  This is not essential to our argument, 
which is closer to that of~\cite{buxgon}.  Nevertheless, we 
state the analogue of theorem~8.6 of~\cite{BB}.  

\begin{theorem} 
Suppose that $\pi_1(L)$ is finite.  For any $a\leq b\in \rr$, 
the level set $X^{(S)}_{[a,b]}$ is homotopy equivalent to a 
wedge of copies of $L$, indexed by the vertices of $X^{(S)}$ 
of height in $S-[a,b]$ and copies of $\widetilde{L}$, indexed
by the vertices of height in $(\zz-S)-[a,b]$.  
\end{theorem}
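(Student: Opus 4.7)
The plan is to construct an explicit map from the claimed wedge $W$ into $X^{(S)}_{[a,b]}$ using the shadow structure of Section~12, extend it to a map between contractible spaces via the Morse-theoretic cone attachments of Theorem~\ref{thmmorse}, and conclude by a five-lemma comparison.

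For each vertex $v$ of $X^{(S)}$ with $f^{(S)}(v) \notin [a,b]$, let $K_v = \Lk^\upw(v)$ if $f^{(S)}(v) < a$ and $K_v = \Lk^\downw(v)$ if $f^{(S)}(v) > b$. By Theorem~\ref{thmexess}, $K_v \cong L$ when $f^{(S)}(v) \in S$ and $K_v \cong \widetilde L$ otherwise. By Proposition~\ref{propshadow}, the shadow $S_{v,K_v}$ is a subcomplex of $X^{(S)}_{[a,b]}$ homeomorphic to $K_v$. Fixing a basepoint $x_0 \in X^{(S)}_{[a,b]}$ (which is path-connected) and paths from $x_0$ to a chosen basepoint of each shadow defines a map $\phi \colon W := \bigvee_v K_v \to X^{(S)}_{[a,b]}$.

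By Theorem~\ref{thmmorse}, the contractible complex $X^{(S)}$ is obtained, up to homotopy, from $X^{(S)}_{[a,b]}$ by attaching a cone $CK_v$ for each $v$ outside $[a,b]$, with attaching map the shadow inclusion $K_v \cong S_{v,K_v} \hookrightarrow X^{(S)}_{[a,b]}$. Therefore $\phi$ extends to $\Phi \colon \bigvee_v CK_v \to X^{(S)}$; both source and target are contractible (the source as a wedge of cones). Comparing the cofiber sequences $W \to \bigvee_v CK_v \to \bigvee_v \Sigma K_v$ and $X^{(S)}_{[a,b]} \to X^{(S)} \to X^{(S)}/X^{(S)}_{[a,b]} \simeq \bigvee_v \Sigma K_v$ via $\Phi$, and applying the five-lemma to the associated long exact homology sequences, shows that $\phi$ induces an isomorphism on integral homology. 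The hypothesis that $\pi_1(L)$ is finite makes $\widetilde L$ a simply-connected finite CW complex, so $\pi_1(W)$ is the free product of copies of $\pi_1(L)$ indexed by those $v$ with $f^{(S)}(v) \in S - [a,b]$; a parallel van Kampen argument applied to the cone-attachment description of $X^{(S)}$ (each cone $CK_v$ kills the image of $\pi_1(K_v)$, and the total quotient is trivial since $X^{(S)}$ is simply-connected) identifies $\pi_1(X^{(S)}_{[a,b]})$ with $\pi_1(W)$ via $\phi_*$. A Whitehead-theorem argument on universal covers, where homology suffices, then promotes $\phi$ to a homotopy equivalence of CW complexes.

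The main obstacle is verifying that the Morse-theoretic attaching map of each cone $CK_v$ coincides, up to homotopy, with the shadow inclusion: Theorem~\ref{thmmorse} supplies the attaching map only up to homotopy, and the coincidence must be checked by matching the Morse flow on the cubes $C_\downw(v,\sigma)$ or $C_\upw(v,\sigma)$ adjacent to $v$ with the radial retraction $X^{(S)} - \{v\} \to \Lk(v)$ of Theorem~\ref{thmdefretr}. A secondary subtlety is the $\pi_1$ comparison: for an infinite wedge van Kampen must be applied with care, and the hypothesis $\pi_1(L) < \infty$ is essential to keep each wedge summand compact so that the argument remains in the well-behaved setting of CW complexes with locally finite link structure.
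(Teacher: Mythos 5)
Your overall strategy diverges from the paper's (which simply runs the argument of \cite[Theorem 8.6]{BB}: decompose the level set as a union of the convex sets ``union of all sheets through $v$'', and build the homotopy equivalence by induction over an enumeration of the vertices by distance), and as written it has two genuine gaps. First, the step ``$X^{(S)}$ is obtained, up to homotopy, from $X^{(S)}_{[a,b]}$ by attaching a cone on $K_v$ along the shadow inclusion $S_{v,K_v}\hookrightarrow X^{(S)}_{[a,b]}$ for \emph{every} $v$ outside $[a,b]$'' is not what Theorem~\ref{thmmorse} gives. That theorem describes one layer at a time: the cone on the descending link of a vertex $v$ with $f^{(S)}(v)\in(c,d]$ is attached along a copy of that link sitting near height $f^{(S)}(v)$, i.e.\ inside the larger sublevel set, not inside $X^{(S)}_{[a,b]}$. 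Homotoping all of these attaching maps down into $X^{(S)}_{[a,b]}$ simultaneously and identifying them with the shadows is precisely the hard geometric content that the sheet machinery of \cite[\S 8]{BB} is designed to handle; Proposition~\ref{propshadow} and the surrounding results only give you that each individual shadow is a retract of, or null-homotopic in, suitable sublevel sets. You flag this as ``the main obstacle'' but do not resolve it.

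Second, even granting that description, the passage from ``coning off the $S_{v,K_v}$ yields a contractible space'' to ``$X^{(S)}_{[a,b]}\simeq\bigvee_v K_v$'' is invalid as argued. Van Kampen applied to the cone attachments shows only that $\pi_1(X^{(S)}_{[a,b]})$ is \emph{normally generated} by the images of the $\pi_1(K_v)$; it does not show that $\phi_*$ is surjective, let alone an isomorphism, onto the free product. And the final Whitehead step needs $\phi$ to induce an isomorphism on the homology of universal covers (equivalently, homology with arbitrary local coefficients); this does not follow from an isomorphism on untwisted integral homology together with an isomorphism on $\pi_1$, and your five-lemma comparison of cofibre sequences only yields the untwisted statement (the cofibres see nothing of the local systems on $X^{(S)}_{[a,b]}$, since $X^{(S)}$ is simply connected). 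Relatedly, you locate the use of the hypothesis that $\pi_1(L)$ is finite in the compactness of the wedge summands, but its real role---as the paper indicates---is to make $X^{(S)}$ locally finite so that the vertices can be enumerated with $d(v,v_i)\le d(v,v_j)$ for $i<j$, which is what drives the inductive argument of \cite[Lemma 8.3]{BB}. To repair your proof you would essentially have to import that induction, at which point you are reproving the paper's argument.
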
 

\begin{proof} 
Identical to the proof of~\cite[thm~8.6]{BB}.  For each vertex $v$ of
$X^{(S)}$, let $K_v$ be the union of all sheets containing~$v$.
Analogues of lemmas 8.1--8.5 of~\cite{BB} hold; in particular each
$K_v$ is a convex subset of $X^{(S)}$.  The hypothesis that $\pi_1(L)$
is finite implies that $X^{(S)}$ is locally finite, which allows us to
enumerate its vertex set as $v=v_1,v_2,v_3,\ldots$ in such a way that
$d(v,v_i)\leq d(v,v_j)$ whenever $i<j$.  Such an enumeration plays an
important role in the proof of~\cite[lemma~8.3]{BB} and its analogue
for $X^{(S)}$.
\end{proof} 

We believe that the above theorem should hold without the hypothesis  
that $\pi_1(L)$ be finite.  This would give an alternative 
treatment of some of our results.

\section{Closing remarks} 

It would be easy to generalize some of our results to the case in 
which $\widetilde{L}$ is replaced by some other regular covering 
$\widehat{L}$ of $L$.  To do this, embed $L$ as a full subcomplex 
of some $M$ with nlcp in such a way that the 
induced map $\pi_1(L)\rightarrow \pi_1(M)$ is a surjection 
with kernel $\pi_1(\widehat{L})$.  Given such an $M$, 
the statement and (`general case') proof of Theorem~\ref{thmexess} 
go through unchanged.  

Ignoring our results concerning higher finiteness properties, our
main theorem gives a new proof that there are uncountably many 
finitely generated groups.  The simplest family of such groups 
arising from our theorem is given by the presentations 
\[\langle a,b,c,d \,\,\colon \,\, a^nb^nc^nd^n = 1 \,\,\forall 
n\in S\rangle.\] 
These are the presentations $P_L(\Gamma,S)$ for $0\in S\subseteq \zz$
in the case when $L$ is a circle, triangulated as the boundary of a 
square.  

We reiterate the remark made after Proposition~\ref{propsfinfhr}: 
In the case when $\zz-S$ is finite and $L$ is $R$-acyclic, we know  
that $G_L(S)$ is $FP(R)$, but we do not know whether $G_L(S)$ is 
necessarily $FH(R)$.  

In an earlier version of this article, we drew attention to two
questions related to this work.  Does every group of type $F_n$ embed
in a group of type $F_{n+1}$?  Does every group of type $FP_n(R)$
embed in a group of type $FP_{n+1}(R)$?  These questions are open for
each $n\geq 2$.  We have recently shown that every countable group
embeds in a group of type $FP_2(\zz)$~\cite{sgfp2}, resolving the case
$n=1$ of the second question.

\leftline{\bf Author's address:}

\obeylines

\smallskip
{\tt i.j.leary@soton.ac.uk}

\smallskip
School of Mathematical Sciences, 
University of Southampton, 
Southampton,
SO17 1BJ

\end{document}